\def\eps{{\varepsilon}}
\def\Card{{\rm Card}}
\def\Const{{\rm Const}}
\def\Corr{{\rm Corr}}
\def\Cov{{\rm Cov}}
\def\Prob{{\mathbb{P}}}
\def\Var{{\rm Var}}
\def\EXP{{\mathbb{E}}}
\def\naturals{\mathbb{N}}
\def\reals{\mathbb{R}}
\def\integers{\mathbb{Z}}
\def\bE{\mathbf{E}}
\def\bP{\mathbf{P}}
\def\bc{\mathbf{c}}
\def\bm{\mathbf{m}}
\def\bsigma{\boldsymbol{\sigma}}
\def\brC{{\bar C}}
\def\brrC{{\bar\brC}}
\def\brc{{\bar c}}
\def\brbc{\mathbf{\brc}}
\def\brn{{\bar n}}
\def\brq{{\bar q}}
\def\brp{{\bar p}}
\def\brrp{{\bar{\brp}}}
\def\brq{{\bar q}}
\def\brrq{{\bar{\brq}}}
\def\brbeta{{\bar \beta}}
\def\brdelta{{\bar \delta}}
\def\breta{{\bar \eta}}
\def\brreta{{\bar{\breta}}}
\def\brxi{{\bar \xi}}
\def\brTheta{{\bar \Theta}}
\def\brOmega{{\bar\Omega}}
\def\cF{\mathcal{F}}
\def\cN{\mathcal{N}}
\def\cO{\mathcal{O}}
\def\hc{{\hat c}}
\def\hm{{\hat m}}
\def\hn{{\hat n}}
\def\hLambda{{\hat\Lambda}}
\def\hrho{{\hat\rho}}
\def\htheta{{\hat\theta}}
\def\hTheta{{\hat\Theta}}
\def\tC{{\tilde C}}
\def\tK{{\tilde K}}
\def\tS{{\tilde S}}
\def\tX{{\tilde X}}
\def\tY{{\tilde Y}}
\def\ta{{\tilde a}}
\def\tb{{\tilde b}}
\def\tc{{\tilde c}}
\def\tf{{\tilde f}}
\def\tdelta{{\tilde\delta}}
\def\tmu{{\tilde\mu}}
\def\trho{{\tilde\rho}}
\def\tTheta{{\tilde\Theta}}
\def\txi{{\tilde\xi}}
\def\tOmega{{\tilde{\Omega}}}
\def\beq{\begin{equation}}
\def\eeq{\end{equation}}
\newtheorem{theorem}{Theorem}
\newtheorem{lemma}{Lemma}[section]
\newtheorem{corollary}{Corollary}
\theoremstyle{definition}
\newtheorem*{remark}{Remark}
\newtheorem*{definition}{Definition}
\numberwithin{equation}{section}
\begin{document}

\title{Quenched limit theorems for nearest neighbour random walks in 1D random environment}

\author{D. Dolgopyat$^{1}$ and I. Goldsheid$^{2}$}
\footnotetext[1]{Department of
Mathematics and Institute of Physical Science and Technology,\\
University  of Maryland, College Park, MD, 20742, USA}
\footnotetext[2]{School of Mathematical Sciences, Queen
Mary University of London, London \\E1 4NS, Great Britain}
\date{}

\maketitle

\begin{abstract}
It is well known that random walks in one dimensional random environment
can exhibit subdiffusive behavior due to presence of traps. In this paper
we show that the passage times of different traps are asymptotically independent exponential
random variables with parameters forming, asymptotically,
a Poisson process. This allows us to prove weak quenched limit theorems
in the subdiffusive regime where the contribution of traps plays the dominating role.
\end{abstract}

\section{Introduction}\label{Intro}
Let $\omega=\{p_i\}, i\in\integers$ be an i.i.d. sequence of random
variables, $0<p_i<1$. The sequence $\omega$ is called {\it
environment} (or {\it random environment}). Let $(\Omega,\bP)$ be the
corresponding probability space with $\Omega$ being the set of all
environments and $\bP$ the probability measure
on $\Omega$. The expectation with respect to this measure will be denoted by $\bE$.
Given an $\omega$ we define a random walk $X=\{X_n,\ n\ge0\}$ on
$\integers$ in the environment $\omega$ by setting $X_0=0$ and
$$ \Prob_\omega(X_{n+1}=X_n+1|X_0\dots X_n)=p_{X_n}\quad
\Prob_\omega(X_{n+1}=X_n-1|X_0\dots X_n)=q_{X_n}$$ where
$q_n=1-p_n.$ Denote by $\mathfrak{X}=\{X\}$ the space of all trajectories
of the walk starting from zero. A \textit{quenched} (fixed) environment $\omega$
thus provides us with a conditional probability measure $\Prob_\omega$ on
$\mathfrak{X}$. The expectation with respect to $\Prob_\omega$
will be denoted by $\EXP_\omega$. In turn, these two
measures naturally generate the so called {\it annealed} measure on
the direct product $\Omega\times\mathfrak{X}$ which is a semi-direct product $\mathrm{P}:=\bP\ltimes\Prob_\omega$.
However, with a very slight abuse of notation,
$\bP$ and $\bE$ will also denote the latter measure and the
corresponding expectation; the exact meaning of the corresponding
probabilities and expectations will always be clear from the
context. The term {\it annealed walk} will be used to discuss
properties of the above random walk with respect to the annealed
probability.

From now on we assume that

(A) $\bE(\ln (p/q))>0.$

(B) $\bE\left(\frac{q}{p}\right)^s=1$ for some $s>0.$

(C) There is a constant $\eps_0$ such that
$\eps_0\leq p_n\leq 1-\eps_0$ with probability~1.

(D) The support of $\ln(q/p)$ is non-arithmetic.

Assumption (A) implies (see \cite{So}) that $X_n\to\infty$ with
probability 1. Assumption (B) means that even though the walker goes
to $+\infty$ there are some sites where the drift points in the
opposite direction. We note that (A) and (B) are essentially equivalent to each other.
Indeed,  since
$\bE\left(\frac{q}{p}\right)^h$ is a convex function of
$h$, (B) implies (A). On the other hand, the existence of finite $s$ in (B) follows from (A)
 if and only if $\bP(q>p)>0$. It is convenient to have both these conditions on the list for
reference purposes.

(C) is a standard ellipticity assumption which prevents
the walker from getting stuck at finitely many vertices for a long time.

(D) is a technical assumption which we don't use in
our proofs but which is used in the proof of Lemma \ref{LmRen}
borrowed from \cite{K1}. It is satisfied by a generic distribution
of $p_n.$

We will be mostly interested in the case $s\in (0,2]$ which implies that the annealed
distribution of $X_n$ does not satisfy the standard Central Limit
Theorem (\cite{KKS}).
Since $X_n$ is transient it looks monotonically increasing on a large
scale and hence it makes sense to study the hitting time $\tilde{T}_N:=\min(n: X_n=N)$
which can roughly be viewed as the inverse function of $X_n$.
This approach was used already in the pioneering papers \cite{So} and \cite{KKS}.
In particular, in \cite{KKS} the annealed behavior of $X_n$ was derived from that of  $\tilde{T}_N$.
The latter is described by the following
\begin{theorem} (\cite{KKS})
\label{ThAnn} The annealed random walk $X$ has the following properties:

(a) If $s<1$ then the distribution of $\frac{\tilde{T}_N}{N^{1/s}}$ converges to a stable law with index $s.$

(b) If $1<s<2$ then there is a constant $u$ such that the distribution of
$\frac{\tilde{T}_N-N u}{N^{1/s}}$ converges to a stable law with index $s.$

(c) If $s>2$ then there is a constant $u$ such that the distribution of
$\frac{\tilde{T}_N-N u}{N^{1/2}}$ converges to a normal distribution.

(d) If $s=1$ then there is a sequence $u_N\sim c N \ln N$ such that the distribution of
$\frac{\tilde{T}_N-u_N}{N}$ converges to a stable law with index $1.$

(e) If $s=2$ then there is a constant $u$ such that the distribution of
$\frac{\tilde{T}_N-N u}{\sqrt{N\ln N}}$ converges to a normal distribution.
\end{theorem}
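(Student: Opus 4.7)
The plan is to reduce to hitting-time increments. Write $\tilde T_N = \sum_{i=1}^N \tau_i$ with $\tau_i := T_i - T_{i-1}$ and $T_i := \min\{n: X_n = i\}$. Under the quenched measure $\Prob_\omega$ the $\tau_i$'s are independent by the strong Markov property, and a first-step analysis of the birth-and-death chain yields the quenched mean
$$
\mu_i(\omega) := \EXP_\omega[\tau_i] = 1 + 2 \sum_{k \ge 1} \rho_{i-1}\rho_{i-2}\cdots\rho_{i-k},
\qquad \rho_j := q_j/p_j,
$$
with the series converging $\bP$-a.s.\ by (A). Higher quenched moments admit similar closed forms as polynomials in the products $\rho_{i-1}\cdots\rho_{i-k}$, and in particular the quenched variance $\Var_\omega(\tau_i)$ can be expressed as a perpetuity in $\rho_j^2$.

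The next step is to identify the annealed tail of $\tau_1$. The perpetuity $R := \sum_{k\ge 0}\prod_{j=0}^{k-1}\rho_{-j}$ satisfies the distributional recursion $R \stackrel{d}{=} 1 + \rho R'$ with $R'\stackrel{d}{=} R$ independent of $\rho$, and since $\mu_1 = 2R-1$, Kesten's renewal estimate (Lemma~\ref{LmRen}, which uses non-arithmeticity (D)) together with assumption (B) gives
$$
\bP(\mu_1(\omega) > x) \sim C_\ast x^{-s}, \qquad x\to\infty,
$$
for an explicit constant $C_\ast>0$. A standard Chebyshev comparison, using the bound on $\Var_\omega(\tau_1)$, transfers this to $\tau_1$ itself: $\mathrm{P}(\tau_1>x)\sim C_\ast' x^{-s}$. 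Parts (a)--(e) are then, in principle, consequences of the appropriate stable/Gaussian limit theorem for $\sum_{i=1}^N \tau_i$: centering by $0$ when $s<1$, by $N\bE[\tau_1]$ when $s>1$ (with $u = \bE[\tau_1] = 1 + 2\bE\rho/(1-\bE\rho)$), and scaling by $N^{1/s}$ for $s<2$ and $\sqrt{N}$ for $s>2$. The boundary cases $s=1,2$ produce the logarithmic corrections $N\ln N$ and $\sqrt{N\ln N}$ after truncating the contributions of atypically large $\tau_i$ at level $N^{1/s}$.

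The main obstacle is that the sequence $\{\tau_i\}$ is \emph{not} independent under the annealed measure: $\tau_i$ and $\tau_{i+1}$ are coupled through shared dependence on the environment at sites $\le i-1$ that appears in $\mu_i$. The cleanest way around this, and the route that meshes best with the quenched theorems targeted in the body of the paper, is a point-process argument. One shows that the extreme values of $\mu_i$ on $\{1,\dots,N\}$ are concentrated at well-separated ``trap'' sites (local extrema of the potential $V_i := \sum_{j\le i}\ln\rho_j$), that after rescaling these trap locations converge to a Poisson point process of intensity proportional to $x^{-s-1}dx$, and that the passage times through distinct traps are asymptotically independent and properly normalised exponentials. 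Summing over traps then yields the stable law in (a), (b) and, with truncation, in (d); the CLT parts (c), (e) follow because the trap contribution becomes subdominant to the sum of $O(N)$ light-tailed increments and a martingale CLT applies to the remainder. This Poisson-of-independent-exponentials picture is precisely the structural statement which the remainder of the paper will refine in order to pass from annealed to quenched limits.
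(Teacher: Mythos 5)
Your outline correctly identifies both the obstacle (the $\tau_i$ are dependent under the annealed measure) and the resolution (Poisson traps with asymptotically independent exponential passage times), so it is aligned in spirit with the paper. Two things are worth flagging. First, the paper organizes the argument around occupation times $T_N=\sum_{j=0}^{N-1}\xi_j$ rather than the hitting-time increments $\tau_i=T_i-T_{i-1}$; Lemma \ref{LmS-T} shows the two are asymptotically interchangeable, and the advantage of $\xi_j$ is that it localizes a trap spatially --- the paper's clusters are short runs of consecutive sites $n$ with large expected occupation $\rho_n=\EXP_\omega\xi_n$, and occupation times of distinct clusters become asymptotically independent (Lemma \ref{LmCorLong}). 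In your parametrization $\mu_i$ is large whenever one of the partial products of your $\rho_j=q_j/p_j$ (the paper's $\alpha_j$) climbs a valley somewhere to the left of $i$, so large $\mu_i$'s are not localized at sites in the same clean way that large $\rho_n$'s are; this makes the Poisson-of-traps bookkeeping heavier in your coordinates. Second, and more importantly, your concluding paragraph --- that rescaled trap masses converge to a Poisson process, that passage times through distinct traps are asymptotically independent exponentials, and that a CLT covers the remainder when $s\ge 2$ --- cannot be cited as standard; it is precisely the content of Theorem \ref{ThMain} and Theorem \ref{ThQLT}, proved in Sections 3--6, and Theorem \ref{ThAnn} is then deduced from them in Appendix A.1 (e.g.\ for $0<s<1$ from Corollary \ref{CrQLT}(a), Lemma \ref{LmPT}(c), Lemma \ref{LmPSt}(a); for $1<s<2$ via the centering bookkeeping with $Y_\delta'$, $Y_\delta''$). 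So what you have written is an accurate outline of the program, not a proof of the statement; the gap is exactly the hard part. Two minor points: transferring the tail $\bP(\mu_1>x)\sim C x^{-s}$ to $\mathrm{P}(\tau_1>x)\sim C' x^{-s}$ needs a matching lower bound, not just a Chebyshev upper bound; and for $s=2$ the paper runs Bernstein's big-block/small-block argument (Section \ref{ScS2}) rather than a martingale CLT, though these are close relatives.
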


The proof of this theorem given in \cite{KKS} makes use of the connection
between random walks in random environment and branching processes.
Another proof of Theorem \ref{ThAnn} was given in \cite{ESZ, ESTZ}.
These papers make use of the notion of potential
introduced by Ya. G. Sinai in \cite{S} for the study of the recurrent case
(when $\bE(\ln(p/q))=0$).

The results for quenched limits (that is when a typical environment is fixed) are relatively recent.
To prove an almost sure quenched limit theorem for $\tilde{T}_N$ one can make use of the
representation
\begin{equation}
\label{HitTimes}
\tilde{T}_N=\sum_{i=1}^{N}\tau_i,
\end{equation}
where $\tau_i$ is the time the walk starting
from $i-1$ needs in order to reaches $i$ for the first time. The advantage of this approach
is due to the fact that if the environment $\omega$ is fixed then $\tau_i$ are independent random variables
and this was used by many authors starting from the pioneering paper \cite{So}.

If $s>2$ then one can prove the almost sure Central Limit Theorem (CLT) for $\tilde{T}_N$
checking that the sequence $\{\tau_i\}$ in \eqref{HitTimes} satisfies the Lindeberg condition for almost all $\omega$
(and for that one only needs the environment $\{p_i\}$ to be stationary, see e.g. \cite{G1}).
Proving the CLT for $X_n$ in this regime is a more delicate matter and this was done
in \cite{G1} for several classes of environments (including the i.i.d. case) and independently in
\cite{P1} for the i.i.d. environments. It has to be mentioned that, in the case of i.i.d.
environments, it is easy to derive the annealed CLT from the related quenched CLT
but this may not be easy for other classes of environments and in fact may not
always be true.

For $s<2,$ an important step forward was made in \cite{P2} and \cite{PZ}
where it was proved that for almost all $\omega$
no non-trivial distributional limit of $\frac{\tilde{T}_N-u_N}{v_N}$ exists for
any choice of sequences $u_N$ and $v_N$ (which may depend on $\omega$). The result is thus
negative: it is impossible to have almost sure quenched limit theorems in this regime.

One of the main goals of this paper is to complete the picture. We show that
$\tilde{T}_N$ viewed as a function of two(!) random parameters, $X$ and $\omega$ (the trajectory of
the walk and the environment), does nevertheless exhibit a limiting
behaviour as $N\to\infty$ which for $0<s<2$ can be described
explicitly in terms of a point Poisson process (Theorem \ref{ThMain}).
Namely, it turns out that for large fixed $N$ and
$\omega\in\Omega_N$ (where $\bP(\Omega_N)\to 1$ as $N\to\infty$) the properly normalizes
$\tilde{T}_N$ is a linear combination of independent exponential
random variables with coefficients of this combination depending only on $\omega$
and forming a point Poisson process. As a corollary, one obtains the results from
\cite{P2} and \cite{PZ} as well as anew proof of Theorem \ref{ThAnn}.
In the case $s=2$ we show that the CLT holds (Theorem \ref{ThQLT}); however, we also show that
it does not hold for almost all $\omega$ but rather just for $\omega\in\Omega_N$
\footnote{Unlike Theorem \ref{ThAnn}, Theorems \ref{ThMain} and \ref{ThQLT} do
not include the case $s>2$. For
reasons discussed above this is not necessary.
However, we explain at the end of Section \ref{ScS2} that it is
not difficult to adapt the argument of that section to handle also
the diffusive regime.}.

The backbone of our approach is formed by the study of occupation times;
such studies were initiated in \cite{R, Shi, GS}. In view of this technique it is more natural to
consider the occupation time $T_N$ of the interval $[0,\,N)$ rather than $\tilde{T}_N$.
These two random variables have the same asymptotic behaviour  (see Lemma \ref{LmS-T})
and therefore the results for $\tilde{T}_N$ follow easily from those
for occupation times.

The main difference between our and other existing approaches is that:

$-$ We introduce a Poisson process describing the ``trapping properties" of the environment.

$-$ This process allows us to explicitly separate the contribution
to the occupation time (or, equivalently, hitting time) coming from the
environment and the walk (and thus prove Theorem \ref{ThMain}).

$-$ It also allows us to answer some other interesting questions
about the limiting behaviour of the walk (e.g., about the limiting
behaviour of the distribution of the maximal occupation times, Theorem \ref{ThRec}).

Similar results are valid in a more general setting of random walks in random environment on a
strip and in particular for walks with bounded jumps.
This will be a subject of a separate paper.

The layout of the paper is the following. In Section \ref{ScResults} we
state our main results. In Section \ref{ScPrelim} we collect background
information and prove some auxiliary results. In Section \ref{ScProof}
we deduce Theorem \ref{ThMain} dealing with the case $s<2$ from the fact that the set of sites
with high expected number of visits has asymptotically Poisson distribution
(Lemma \ref{LmPVis}). The proof of Lemma \ref{LmPVis} itself is given in Section
\ref{ScPVis}. The case when $s=2$ (Theorem \ref{ThQLT}) requires a different approach (namely, we use
big block-small block method of Bernstein) which is presented in Section \ref{ScS2}.
In Section \ref{ScMOT} we explain how to modify the proof of Theorem \ref{ThMain} to obtain
Theorem \ref{ThRec}. Finally in the appendix we derive some previously
known theorems from our results.

After completing the paper we learned that Corollary \ref{CrQLT} was proved independently by
J. Peterson and G. Samorodnitsky using a different approach. After a discussion with them we
we agreed that  our paper and \cite{PS} should be published separately in order to make both approaches available to the public.

\section{Main results}\label{ScResults}
Throughout the paper the following definitions and notations will be used.
\begin{definition} The {\it occupation time} $T_N $ of the interval $[0,N)$
is the total time the walk $X_n$ starting from $0$ spends on this (semi-open) interval
during its life time. In other words, $T_N=\#\{n:\ 0\le n< \infty,\ 0\le X_n\le N-1\}$
\end{definition}
\begin{remark} We thus use the following convention: starting from a site $j$
counts as one visit of the walk to $j$.
\end{remark}
The occupation time of a site $j$ is defined similarly and is denoted by $\xi_j$.
Observe that $T_N$ (and $\xi_j$) is equal
to the number of visits by the walk to $[0,N)$ (respectively, to site $j$).
Since our random walk is transient to the right, both $T_N$ and $\xi_j$ are,
$\bP$-almost surely, finite random variables.
It is clear from these definitions that
$$T_N=\sum_{j=0}^{N-1} \xi_j.$$
The following lemma shows that $T_N$ and the hitting time $\tilde{T}_N$
have the same asymptotic behaviour.
\begin{lemma} For any $\eps>0$
\label{LmS-T}
$$\bP\left(\frac{|T_N-\tilde{T}_N|}{N^{1/s}}>\eps\right)\to 0
\text{ as } N\to \infty. $$
\end{lemma}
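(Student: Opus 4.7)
The plan is to decompose the discrepancy $T_N-\tilde{T}_N$ into two pieces, each of which I can show is $o(N^{1/s})$ by purely qualitative arguments based on transience. Set
\begin{align*}
T_N^- &:= \#\{n : 0\le n < \tilde{T}_N,\ X_n < 0\},\\
T_N^+ &:= \#\{n \ge \tilde{T}_N : 0 \le X_n \le N-1\},
\end{align*}
the times spent in $(-\infty,0)$ before first hitting $N$ and in $[0,N)$ after first hitting $N$, respectively. Every $n<\tilde{T}_N$ satisfies $X_n<N$ (otherwise $\tilde{T}_N$ would not be minimal), so $X_n\in(-\infty,0)\cup[0,N-1]$ for such $n$; splitting $T_N$ at time $\tilde{T}_N$ therefore yields
$$T_N=(\tilde{T}_N-T_N^-)+T_N^+,\qquad\text{equivalently}\qquad T_N-\tilde{T}_N=T_N^+-T_N^-.$$
It thus suffices to prove $T_N^\pm/N^{1/s}\to 0$ in $\bP$-probability.

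The bound on $T_N^-$ is immediate: $T_N^-\le T_\infty^-:=\#\{n\ge 0 : X_n<0\}$, and assumption (A) forces $X_n\to+\infty$ $\bP$-almost surely, so $T_\infty^-$ is $\bP$-a.s.\ finite and $T_N^-/N^{1/s}\to 0$ almost surely. For $T_N^+$ I would apply the strong Markov property at $\tilde{T}_N$: conditional on $\omega$, the post-$\tilde{T}_N$ trajectory is a walk in the environment $\omega$ starting at $N$. Translating that walk by $-N$ produces a walk starting at $0$ in the shifted environment $\sigma^N\omega$, where $(\sigma\omega)_i=\omega_{i+1}$, and the sites $[0,N)$ correspond to $[-N,0)$. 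Because the $p_i$ are i.i.d., $\bP$ is $\sigma$-invariant, so the annealed law of $T_N^+$ coincides with that of the time spent in $[-N,0)$ by the original walk; the latter is monotone in $N$ and bounded by the a.s.\ finite $T_\infty^-$. Hence $T_N^+/N^{1/s}\to 0$ in $\bP$-probability as well.

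The only step that requires any real care is the shift-stationarity reduction for $T_N^+$, which lets me replace a walk-from-$N$ quantity with a walk-from-$0$ quantity at no distributional cost; once this reduction is in place, transience (a consequence of (A)) does all the work, and no quantitative control on the tail of $T_\infty^-$ is needed.
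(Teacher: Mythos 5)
Your proof is correct and is essentially the paper's own argument: split off the time spent below $0$ (bounded by the a.s.\ finite $T_\infty^-$ via transience) and the time spent in $[0,N)$ after $\tilde{T}_N$ (handled by the strong Markov property at $\tilde{T}_N$ plus shift-invariance of the annealed law). The only cosmetic difference is that you record the exact identity $T_N-\tilde{T}_N=T_N^+-T_N^-$, whereas the paper uses the slightly looser (and with some harmless off-by-one sloppiness) upper bound $|T_N-\tilde{T}_N|\le\#\{n\ge 0: X_n<0\}+\#\{n>\tilde{T}_N: X_n<N\}$; the substantive ingredients are identical.
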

\begin{proof}
It is easy to see that
$$\tilde{T}_N= \#\{n:\ 0 < n\le \tilde{T}_N,\ X_n\in[0,N-1]\} +\#\{n:\ 0 < n\le \tilde{T}_N,\ X_n<0\}$$
and
$${T}_N= \#\{n:\ 0\le n\le \tilde{T}_N,\ X_n\in[0,N-1]\} +\#\{n:\ n> \tilde{T}_N,\ X_n\in[0,N-1]\}.$$
Since the first terms in these formulae are equal, $|T_N-\tilde{T}_N|$ can be estimated above by a sum of two random variables: the number of visits to the left of 0 and the number of visits to the left of $N$ after $\tilde{T}_N$:
$$|T_N-\tilde{T}_N|\le\#\{n:\ n\ge0,\ X_n<0\}+\#\{n:\ n> \tilde{T}_N,\ X_n<N\} $$
The first term in this estimate is bounded for $\bP$-almost
all $\omega$. Since $\tilde{T}_N$ is a hitting time, the second
term has, for a given $\omega$, the same distribution as
$\#\{n:\ n> 0,\ X_n<N\,|\,X_0=N\} $ (due to the strong Markov property).
Finally, the latter is a stationary sequence with respect to the
annealed measure and therefore is stochastically bounded. Hence the Lemma.
\end{proof}
\begin{remark} The difference between $T_N$ and $\tilde{T}_N$ is thus negligible and yet
there is a sharp contrast between their presentations by sums introduced above.
Namely, unlike the $\tau_i$'s, the $\xi_j$'s are not independent. Moreover, as we shall see
below, there are whole random regions on $[0,N]$ where the knowledge of just one $\xi_j$
essentially determines the values all the others. In fact, namely this strong interdependence
of $\xi_j$'s implies some of the main results of this paper.
\end{remark}
From now on we shall deal mainly with $ \mathfrak{t}_N$ which is the normalized version of $T_N$:
$$ \mathfrak{t}_N=\begin{cases} \frac{T_N}{N^{1/s}} & \text{if } 0<s<1, \cr
\frac{T_N-\EXP_\omega(T_N)}{N^{1/s}} & \text{if } 1\le s<2, \cr
\frac{T_N-\EXP_\omega(T_N)}{\sqrt{N\ln N}} & \text{if } s=2. \cr
\end{cases} $$
It is also important and natural to have control over the $\mathbb{E}_{\omega}(T_N)$. The corresponding normalized quantity is defined as follows:
$$ \mathfrak{u}_N=\begin{cases} \frac{\EXP_\omega(T_N)}{N^{1/s}} & \text{if } 0<s<1, \cr
\frac{\EXP_\omega(T_N)-u_N}{N} & \text{if } s=1, \cr
\frac{\EXP_\omega(T_N)-\bE(T_N)}{N^{1/s}} & \text{if } 1< s<2, \cr
\frac{\EXP_\omega(T_N)-\bE(T_N)}{\sqrt{N\ln N}} & \text{if } s=2, \cr
\end{cases} $$
where $u_N$ is the same as in Theorem \ref{ThAnn}. Set
$$ F_N^\omega(x)=\Prob_\omega\left(\mathfrak{t}_N\leq x\right). $$
Then $x\to F_N^\omega(x)$ is a sequence of random
processes. The result from \cite{P2, PZ} cited above states that these processes are not concentrated near one point (at least for $0<s<2$). Nonetheless, the limiting behaviour of the sequence $\mathfrak{t}_N$
can be described in terms of a marked point Poisson process which we shall now introduce.

We start with a point Poisson process. Given a $\bc>0,$ let $\Theta=\{\Theta_j\}$ be a point  Poisson process
\footnote{For reader's convenience we
collect some facts about the Poisson processes in section \ref{SSPP}.}
on $(0, \infty)$ with intensity
$\frac{\bc}{\theta^{1+s}}.$
For a given collection of points $\{\Theta_j\}$  let $\{\Gamma_{\Theta_j}\}$
be a collection of i.i.d. random variables
with mean 1 exponential distribution which are thus labeled by the points $\{\Theta_j\}$.
In the sequel we shall use a concise notation $\{\Gamma_j\}$ for $\{\Gamma_{\Theta_j}\}$.
We can now consider a new process $(\Theta,\Gamma)=\left(\{\Theta_j,\Gamma_j\}\right)$ which is often
called the marked point Poisson process.
We note that $(\Theta,\Gamma)$ is in fact a point Poisson process on
$(0,\infty)\times (0,\infty)$ with intensity $\frac{\bc}{\theta^{1+s}}\times e^{-x}$.
We shall denote by $E_{\Theta}$, $\Var_{\Theta}$, etc. the relevant expectations, variances, etc.
Set
$$ Y=\begin{cases} \sum_j \Theta_j \Gamma_j & \text{if } 0<s<1 \cr
\sum_j \Theta_j \left(\Gamma_j-1\right) & \text{if } 1\le s<2 \cr
\end{cases}. $$
Observe that $Y$ is finite almost surely. Indeed, there are only finitely many points with $\Theta_j\geq 1.$
Next, if $0<s<1$ let
$$\tY=\sum_{\Theta_j<1} \Theta_j \Gamma_j . $$
Then
$$E_{\Theta}(\tY)=\int_0^1 \frac{\bc \theta d\theta}{\theta^{1+s}}=\frac{\bc}{1-s}<\infty. $$
In case $1\leq s<2$ let
$$\tY_\delta=\sum_{\delta<\Theta_j<1} \Theta_j (\Gamma_j-1). $$
Then $E_{\Theta}(\tY_\delta)=0$ and
$$ \Var_{\Theta}(\tY_\delta)=\int_\delta^1 \frac{\bc \theta^2 d\theta }{\theta^{1+s}}=\frac{\bc}{2-s}\left(1-\delta^{2-s}\right). $$
Denote by $\Theta^{(\delta)}$ a point Poisson Process
on $\mathbb{R}_\delta:=[\delta, \infty)$ with intensity $\frac{\bc}{\theta^{1+s}}$
and let $(\Theta^{(\delta)},\Gamma)$ be a marked process with $\Gamma$ being as
above. Obviously $(\Theta,\Gamma)$ corresponds to $\delta=0$ and
$(\Theta^{(\delta)},\Gamma)$ can be viewed as a projection of the former to a smaller
phase space.

Let $\mathfrak{F}_\delta$ be the set of all finite subsets of
$\mathbb{R}_\delta$ and $\Theta^{(N,\delta)}$ be a sequence of point processes
defined on the space of environments $\Omega$ and taking values in $\mathfrak{F}_\delta$. On a more formal level we can write $\Theta^{(N,\delta)}: \Omega\mapsto \mathfrak{F}_\delta$. The standard definitions of the relevant sigma-algebra and measurability can be found e.g. in \cite{SamT}. In the
constructions below such sequences will be arising in a natural way and it will always be
clear that the relevant mappings  are measurable.
Set $|\Theta^{(N, \delta)}|\equiv \Card(\Theta^{(N, \delta)})$. We need the following
\begin{definition}
A sequence of random point processes
$\Theta^{(N,\delta)}=\{\Theta_j^{(N,\delta)}\}$ defined
on $\Omega$ converges weakly to a Poisson process
$\Theta^{(\delta)}$ if for any $k\ge 1$ and any bounded continuous symmetric
function $H_k:\mathbb{R}_\delta^k\mapsto \mathbb{R} $ of $k$ variables
$$\lim_{N\to\infty}\mathbf{E}\left(H_k(\Theta^{(N, \delta)})\,I_{|\Theta^{(N, \delta)}|=k}\right)
=E_{\Theta}\left(H_k(\Theta)\,I_{|\Theta|=k}\right).$$
\end{definition}
Suppose next that $\Gamma^{(N,\delta)}$ is a collection of random variables defined
on $\Omega\times\mathfrak{X}$ and which can be labeled by the points of
$\Theta^{(N,\delta)}=\{\Theta^{(N,\delta)}_j\}$. In other words, for a given
$(\omega, X)\in \Omega\times\mathfrak{X}$ we suppose that
$\Theta^{(N,\delta)}=\{\Theta^{(N,\delta)}_j(\omega)\}$
and $\Gamma^{(N,\delta)}=\{\Gamma^{(N,\delta)}_{\Theta_j}(\omega,X)\}$.
Once again, as in the definition of
$\Theta^{(N,\delta)}$  we shall write
$\{\Gamma^{(N,\delta)}_{j}\}$ for $\{\Gamma^{(N,\delta)}_{\Theta_j}\}$.
Finally, the marked process $(\Theta^{(N,\delta)},\Gamma^{(N,\delta)})=
(\{\Theta^{(N,\delta)}_j,\Gamma^{(N,\delta)}_{j}\})$ can be viewed as a mapping
$$(\Theta^{(N,\delta)},\Gamma^{(N,\delta)}):
\Omega\times\mathfrak{X}\mapsto \mathfrak{F}_\delta\times \tilde{\mathfrak{F}},$$
where $\tilde{\mathfrak{F}}$ is the set of all finite subsets of $[0,\infty)$.
The weak convergence of this sequence of processes to $(\Theta^{(\delta)},\Gamma)$
is defined as above with the only
difference that now we have to deal with symmetric continuous functions
$H_k:\left(\mathbb{R}_\delta\times[0,\infty)\right)^{k}\mapsto \mathbb{R}$.
\begin{definition}
The component $\Gamma^{(N,\delta)}(\omega)$ is
said to be asymptotically independent of the environment if for any $k\ge 1$ and any bounded continuous
symmetric function $H_k:\mathbb{R}_0^k\mapsto \mathbb{R} $ of $k$ variables
$$\lim_{N\to\infty}\mathbf{E}\left[I_{|\Theta^{(N, \delta)}|=k}\left|\mathbb{E}_\omega\left(H_k(\Gamma^{(N, \delta)})\right)
-E_{\Gamma}\left(H_k(\Gamma)\,|\,|\Theta^{(\delta)}|=k\right)\right|\right]=0.$$
\end{definition}
Note that here $H_k(\Gamma^{(N, \delta)})$ is well defined because $|\Gamma^{(N, \delta)}|
=|\Theta^{(N, \delta)}|=k$. We can now state our main result.
\begin{theorem}
\label{ThMain} For $0<s<2$ and a $\delta>0$ there is a sequence
$\Omega_{N,\delta}\subset\Omega$ such that
$\lim_{N\to\infty}\bP(\Omega_{N,\delta})=1$ and a sequence of random point processes
$$(\Theta^{(N,\delta)},\Gamma^{(N,\delta)}):
\Omega\times\mathfrak{X}\mapsto \mathfrak{F}_\delta\times \tilde{\mathfrak{F}},$$
such that \newline
(i) The component $\Theta^{(N,\delta)}$ depends only on $\omega$ and converges weakly to a point Poisson process $\Theta^{(\delta)}$ on $[\delta,\infty)$ with intensity $\frac{\brc}{\theta^{1+s}}$ (with some constant $\brc>0$).
\newline
(ii) The component $\Gamma^{(N,\delta)}$ 
is asymptotically independent of the environment and converges weakly to a sequence of $|\Theta^{(N, \delta)}|$
i.i.d. exponential random variables with mean 1.
\newline
(iii) The $\mathfrak{t}_N$ and $\mathfrak{u}_N$ can be presented in the following form:
\newline
(a) If\, $0<s<1$ then for $\omega\in\Omega_{N,\delta}$
\begin{equation}\label{Th2a}
\mathfrak{t}_N=\sum_{j}\Theta_j^{(N,\delta)}\Gamma_j^{(N,\delta)} + R_N,\
\ \text{where}\ \  R_N\ge 0 \ \ \text{and}\ \ \bE(R_N)=O(\delta^{1-s})
\end{equation}
$$\mathfrak{u}_N=\sum_{j}\Theta_j^{(N,\delta)} + O(\delta^{1-s})$$
\newline
(b) If $s=1$ then there is $\Omega_{N,\delta}$ such
that for $\omega\in\Omega_{N,\delta}$ and a given $0<\kappa<1$
$$\mathfrak{t}_N=\sum_{j}\Theta_j^{(N,\delta)}(\Gamma_j^{(N,\delta)}-1) + R_N,\ \ \text{where}\ \  \bE\left[1_{\Omega_{N,\delta}}\mathbb{E}_{\omega}(R_N^2)\right]^\kappa=O(\delta^{2\kappa})$$
$$\mathfrak{u}_N=\sum_{j}\Theta_j^{(N,\delta)} - \brc\ln\delta+ R_N,\ \ \text{where}\ \  \bE(|R_N|^{2\kappa})=O(\delta^{2\kappa})$$
\newline
(c) If $1< s<2$ then for $\omega\in\Omega_{N,\delta}$
$$\mathfrak{t}_N=\sum_{j}\Theta_j^{(N,\delta)}(\Gamma_j^{(N,\delta)}-1) + R_N,\ \ \text{where}\ \  \bE\left[1_{\Omega_{N,\delta}}\mathbb{E}_{\omega}(R_N^2)\right]=O(\delta^{2-s})$$
$$\mathfrak{u}_N=\sum_{j}\Theta_j^{(N,\delta)}-\frac{\brc}{(s-1)\delta^{s-1}} + R_N,\ \ \text{where}\ \  \bE(R_N^2)=O(\delta^{2-s})$$
\end{theorem}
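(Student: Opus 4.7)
The plan is to read the Poisson process directly off the conditional expected occupation times. Let $\cI_N(\delta)=\{j\in[0,N):\,\EXP_\omega(\xi_j)>\delta N^{1/s}\}$ be the set of $\delta$-traps, enumerate them as $i_1<i_2<\dots$, and put
\[
\Theta^{(N,\delta)}=\{\EXP_\omega(\xi_{i_j})/N^{1/s}\},\qquad \Gamma_j^{(N,\delta)}=\xi_{i_j}/\EXP_\omega(\xi_{i_j}).
\]
By construction $\Theta^{(N,\delta)}$ is a function of $\omega$ alone, while $\Gamma_j^{(N,\delta)}$ depends on $(\omega,X)$. Take $\Omega_{N,\delta}$ to be the environments on which (a) $|\cI_N(\delta)|$ is bounded, (b) distinct traps are separated by at least $N^\alpha$ for some small $\alpha>0$, and (c) tails of trap strengths and of the total non-trap expected occupation time satisfy uniform bounds; $\bP(\Omega_{N,\delta})\to 1$ follows from Lemma \ref{LmPVis} and annealed moment estimates on a single $\EXP_\omega(\xi_j)$.

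\textbf{Parts (i) and (ii).} Statement (i) is exactly Lemma \ref{LmPVis}. For (ii), the decisive observation is that for fixed $\omega$ the occupation time $\xi_{i_j}$ has a compound-geometric structure: after first hitting $i_j$, the walk makes a geometric number of returns to $i_j$ with escape probability $c_j\sim 1/\EXP_\omega(\xi_{i_j})$, and each excursion has length with bounded expectation thanks to ellipticity (C). At a trap the escape probability is $o(1)$, so the rescaled geometric count converges to a mean-$1$ exponential, giving asymptotic exponentiality of $\Gamma_j^{(N,\delta)}$. Asymptotic independence across traps \emph{and} from the environment both rely on the $N^\alpha$-separation of traps on $\Omega_{N,\delta}$: intertrap passages take time $O(N)$, negligible compared with the $O(N^{1/s})$ spent inside traps, and the excursion counts at different traps are driven by disjoint blocks of coin flips, coupled only through the few transitions between them — an error controlled by the strong Markov property.

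\textbf{Part (iii).} Split $T_N=\sum_{j\in\cI_N(\delta)}\xi_j+\sum_{j\notin\cI_N(\delta)}\xi_j$. The first sum divided by $N^{1/s}$, after the appropriate centering, is exactly $\sum_j\Theta_j^{(N,\delta)}\Gamma_j^{(N,\delta)}$ or $\sum_j\Theta_j^{(N,\delta)}(\Gamma_j^{(N,\delta)}-1)$. The second sum becomes the remainder $R_N$. The stated bounds on $R_N$ are image under Lemma \ref{LmPVis} of the Poisson integrals
\[
\int_0^\delta\theta\cdot\brc\theta^{-1-s}d\theta=\tfrac{\brc}{1-s}\delta^{1-s}\quad(s<1),\qquad
\int_0^\delta\theta^2\cdot\brc\theta^{-1-s}d\theta=\tfrac{\brc}{2-s}\delta^{2-s}\quad(s\ge 1),
\]
combined with the $\omega$-conditional independence of the $\xi_j$'s (first moment for $s<1$, variance for $s\geq 1$). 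The compensators in $\mathfrak{u}_N$ come from matching $u_N$ (or $\bE(T_N)$) with the truncated expectation $\sum_{j\in\cI_N(\delta)}\EXP_\omega(\xi_j)$: the mismatch equals $\int_\delta^{O(1)}\brc\theta^{-s}d\theta$ in annealed average, producing $-\brc\ln\delta$ for $s=1$ and $-\brc/((s-1)\delta^{s-1})$ for $1<s<2$.

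\textbf{Main obstacle.} The hard step is the quantitative version of (ii). One must prove asymptotic exponentiality and asymptotic independence \emph{jointly}, with the uniform-in-environment control demanded by the definition of asymptotic independence of $\Gamma^{(N,\delta)}$ from $\omega$. This requires decomposing the trajectory into inter-trap passages and within-trap excursions, applying the strong Markov property to decouple them, carefully estimating the error in the geometric-to-exponential approximation (where the escape probability per excursion is an explicit but $\omega$-dependent quantity built from products $\prod(q_i/p_i)$), and showing that the coupling between neighbouring traps via intermediate visits to the walk's boundary contributes a negligible term.
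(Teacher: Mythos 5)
Your proposal misses the central structural fact that the paper's entire Section~\ref{ScProof} is built around: the set of sites with $\rho_n=\EXP_\omega(\xi_n)\ge \delta N^{1/s}$ does \emph{not} consist of isolated points. From the recursion~\eqref{OTRR} and the ellipticity assumption (C), $\rho_n\ge\delta N^{1/s}$ forces $\rho_{n-1}\ge \eps_0\,\delta N^{1/s}$ and more generally keeps $\rho_{n-k}$ comparable to $\rho_n$ for a random but $O(1)$-deep stretch of $k$'s. So your set $\cI_N(\delta)$ comes in clusters of adjacent massive sites; the requirement in your definition of $\Omega_{N,\delta}$ that ``distinct traps are separated by at least $N^\alpha$'' is an event whose probability does \emph{not} tend to one. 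In fact the paper explicitly warns about this in the remark after Lemma~\ref{LmS-T}: ``there are whole random regions on $[0,N]$ where the knowledge of just one $\xi_j$ essentially determines the values of all the others.''

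This breaks both (i) and (ii) as you state them. For (i), the point process $\{\rho_{i_j}/N^{1/s}\}_{i_j\in\cI_N(\delta)}$ of \emph{individual} massive-site strengths does not converge to a Poisson process with intensity $\brc\theta^{-1-s}$; the clustering produces strong short-range attraction. For (ii), Lemma~\ref{LmCorShort} shows $\Corr_\omega(\xi_n,\xi_{n+1})\ge 1-C/\rho_n\to 1$ on a trap, so for adjacent $i_j,i_{j+1}$ inside a cluster your $\Gamma_j^{(N,\delta)}=\xi_{i_j}/\rho_{i_j}$ and $\Gamma_{j+1}^{(N,\delta)}$ are asymptotically \emph{equal}, not independent. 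The ``disjoint blocks of coin flips / strong Markov'' argument you invoke would apply between well-separated clusters but not within one; you never decouple the within-cluster dependence because your setup never acknowledges it. The paper's fix — and its genuinely new device — is to aggregate: define a \emph{marked} site $n$ (last massive site of a cluster), take $\Theta_j^{(N,\delta)}=m_n/N^{1/s}$ to be the whole cluster mass $m_n=\sum_{k=0}^M\rho_{n-k}$, and take a \emph{single} $\Gamma_j^{(N,\delta)}=\xi_n/\rho_n+\zeta_n$ for the whole cluster, using Lemma~\ref{LmCorShort} to show every $\xi_{n-k}/\rho_{n-k}$ in the cluster agrees with $\xi_n/\rho_n$ up to an error of size $O(N^{\bar\eps/2}/\sqrt{\rho_n})\to 0$. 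The Poissonian limit is then proved for cluster positions and masses (Lemma~\ref{LmPVis}), not for raw massive sites, and needs the additional bookkeeping Lemmas~\ref{Lm2Clusters}, \ref{non-cluster}, \ref{in-cluster} to control massive sites that fall outside clusters and to compare the counts.

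Your decomposition $T_N$ into massive vs.\ non-massive sites, and the Poisson-integral accounting of the remainder $R_N$ and the compensators in $\mathfrak{u}_N$, is in the correct spirit and matches the paper's Lemma~\ref{LmLow}; that part is fine modulo the above. But as it stands the cluster issue is not a technical ``main obstacle'' to be settled later — it invalidates the identification of $(\Theta^{(N,\delta)},\Gamma^{(N,\delta)})$ you propose, and any proof of (i)--(ii) built on isolated single-site traps would fail.
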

\begin{remark} Note that the dependence of $\Theta^{(N,\delta)}$ on $\omega$ persists as $N\to\infty$
whereas  $\Gamma^{(N,\delta)}$ becomes ``almost" independent of $\omega$. More precisely, for
$K\gg 1$ and sufficiently large $N$ the events $B_k:=\{|\Theta^{(N,\delta)}|=k\}$, $1\le k\le K$,
form, up to a set of a small probability, a partition of $\Omega$. Obviously
$$\lim_{N\to\infty} \bP\{|\Theta^{(N,\delta)}|=k\}=\frac{e^{-\tilde{c}\delta^{-s}}(\tilde{c}\delta^{-s})^k}{k!},$$
 where $\tilde{c}=\brc/s$.
In contrast,
if $\omega\in B_k$ then $\Gamma^{(N,\delta)}(\omega, X)$ is a collection of $k$ random variables which
converge weakly as $N\to\infty$ to a collection of $k$ i.i.d. standard exponential random variables.
Thus the only dependence of $\Gamma^{(N,\delta)}(\omega, X)$ on $\omega$ and $\delta$ which persists as $N\to\infty$ is reflected by the fact that
$|\Theta^{(N,\delta)}|=|\Gamma^{(N,\delta)}|$.
\end{remark}
Given a $\Theta$ let $F^\Theta$ be the conditional distribution
function of $Y$. The following statements are easy consequences of
Theorem \ref{ThMain}.

\begin{corollary}
\label{CrQLT}
(a) If $0<s<2,$ $s\neq 1$ then
$F^\omega_N$ converges weakly to $F^{\Theta}.$

(b) If $1< s<2$ then
$ \left(F^\omega_N, \frac{\EXP_\omega(T_N)-\bE(T_N)}{N^{1/s}}\right)$
converges weakly to
$$ \left(F^{\Theta}, \lim_{\delta\to0}\left(\sum_{\Theta_j>\delta} \Theta_j-\frac{\brc}{(s-1)\delta^{s-1}}\right)\right). $$

(c) If $s=1$ then there exists $u_N\sim \brc N\ln N$ such that
$ \left(F^\omega_N, \frac{\EXP_\omega(T_N)-u_N}{N}\right)$
converges weakly to
$$ \left(F^{\Theta}, \lim_{\delta\to0}\left(\sum_{\Theta_j>\delta}
\Theta_j-E(\delta)\right)\right)$$
where
$E(\delta)=\bE\left(\sum_{\delta<\Theta_j<1} \Theta_j\right)=-\brc\ln\delta.$
\end{corollary}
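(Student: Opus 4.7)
The plan is to derive Corollary \ref{CrQLT} from Theorem \ref{ThMain} by a two-stage truncation argument. Fix $\delta>0$. On the event $\Omega_{N,\delta}$, whose probability tends to $1$, Theorem \ref{ThMain} provides a decomposition
\[
\mathfrak{t}_N \;=\; S_N^{(\delta)} + R_N^{(\delta)}, \qquad S_N^{(\delta)}:=\sum_j \Theta_j^{(N,\delta)}\bigl(\Gamma_j^{(N,\delta)}-a_s\bigr),
\]
with $a_s=0$ for $s<1$ and $a_s=1$ for $1\leq s<2$, together with analogous decompositions for $\mathfrak{u}_N$ in which the marks $\Gamma_j^{(N,\delta)}$ do not appear. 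The strategy is to first let $N\to\infty$ with $\delta$ fixed, and then let $\delta\to 0$.

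First stage: view $F_N^\omega$ as a random element of the space of CDFs equipped with the L\'evy metric. Item (i) of Theorem \ref{ThMain} says the point process $\Theta^{(N,\delta)}$ converges weakly to the Poisson process $\Theta^{(\delta)}$; item (ii) says that, conditionally on $\omega$, the marks $\Gamma_j^{(N,\delta)}$ become in the limit i.i.d. standard exponentials that are independent of the points. Since the conditional CDF of $S_N^{(\delta)}$ given $\omega$ is a continuous functional of $\Theta^{(N,\delta)}$ together with the conditional joint law of $\Gamma^{(N,\delta)}$, these two items imply that the random CDF $x\mapsto\Prob_\omega\!\bigl(S_N^{(\delta)}\leq x\bigr)$ converges in distribution (as a random element of CDF-space) to the conditional CDF $F^{\Theta^{(\delta)}}$ of
\[
Y^{(\delta)}\;:=\;\sum_j \Theta_j^{(\delta)}\bigl(\Gamma_j-a_s\bigr)
\]
given $\Theta^{(\delta)}$. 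Jointly, $\sum_j\Theta_j^{(N,\delta)}$ converges in distribution to $\sum_j\Theta_j^{(\delta)}$, which is what is needed for the joint statements in (b) and (c).

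Second stage: the moment bounds on $R_N^{(\delta)}$ and on the remainders in the $\mathfrak{u}_N$ decompositions ensure that they vanish appropriately as $\delta\to 0$, uniformly in large $N$. For $s<1$, Markov applied to $\bE(R_N)=O(\delta^{1-s})$ gives $\bP(R_N>\eps)\to 0$. For $1\leq s<2$, the quenched second-moment bound $\bE\bigl[1_{\Omega_{N,\delta}}\EXP_\omega(R_N^2)\bigr]=O(\delta^{2-s})$ (respectively $O(\delta^{2\kappa})$ when $s=1$) combined with Chebyshev shows that, on a set of environments of $\bP$-probability $1-o_\delta(1)$, the quenched probability $\Prob_\omega\!\bigl(|R_N^{(\delta)}|>\eps\bigr)$ is itself $o_\delta(1)$; this forces the L\'evy distance between $F_N^\omega$ and the truncated CDF $\Prob_\omega\!\bigl(S_N^{(\delta)}\leq\cdot\bigr)$ to be small. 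A direct calculation in the Poisson model (using the variance estimate for $\tY_\delta$ already displayed in the excerpt) shows $Y^{(\delta)}\to Y$ in probability as $\delta\to 0$, so $F^{\Theta^{(\delta)}}\to F^\Theta$ weakly.

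Combining the two stages via a standard diagonal argument in CDF-space yields part (a). Parts (b) and (c) follow by applying the same scheme jointly, using that the points $\Theta_j^{(N,\delta)}$ appearing in the $\mathfrak{t}_N$ and $\mathfrak{u}_N$ decompositions are identical; in case (c) the centering $u_N\sim\brc N\ln N$ emerges from the $-\brc\ln\delta$ correction in the $\mathfrak{u}_N$ decomposition after matching with the definition of $\bE(T_N)$. The main obstacle I anticipate is the simultaneous control of the remainders and the random mark/point processes for $s\in[1,2)$: one must translate the integrated second-moment bound into L\'evy-distance smallness of the random CDF $F_N^\omega$ on an event of $\omega$'s of probability close to $1$, and then execute the diagonal extraction in $(N,\delta)$ without losing either the weak convergence of $\Theta^{(N,\delta)}$ or the asymptotic independence of the marks required to identify the limit as $F^\Theta$.
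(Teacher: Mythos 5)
The paper itself gives no proof of Corollary~\ref{CrQLT}, declaring it an ``easy consequence'' of Theorem~\ref{ThMain}; your two-stage argument (first $N\to\infty$ at fixed $\delta$ using items~(i)--(ii) to identify the limit of the random CDF of $S_N^{(\delta)}$ as $F^{\Theta^{(\delta)}}$, then $\delta\to 0$ using the moment bounds on $R_N^{(\delta)}$ and the $\tY_\delta$ variance estimate, followed by a diagonal extraction) is precisely the intended route and is correct. The one place where your write-up is looser than it should be is the passage from the \emph{annealed} moment bounds $\bE(R_N)=O(\delta^{1-s})$ (resp.\ $\bE[1_{\Omega_{N,\delta}}\EXP_\omega(R_N^2)]=O(\delta^{2-s})$) to smallness of the \emph{quenched} tail $\Prob_\omega(|R_N|>\eps)$ on a set of $\omega$'s of large $\bP$-measure: one needs an extra Markov/Chebyshev step in $\omega$ (i.e.\ bound $\bP\bigl(\Prob_\omega(|R_N|>\eps)>\eta\bigr)\le\eta^{-1}\bE\bigl[\Prob_\omega(|R_N|>\eps)\bigr]$) to turn the annealed bound into control of the L\'evy distance between $F_N^\omega$ and the truncated CDF; you gesture at this but it deserves to be spelled out, since this is exactly the point where the quenched nature of the statement enters.
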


\begin{remark}
Similar limiting distribution were obtained in \cite{ST} for a simpler model of `random climbing'
where the particle moves forward with unit speed and with intensity 1 it slides back to a nearest point of intensity
$\lambda$ Poisson process.
\end{remark}

We also recovers the result of \cite{PZ}.

\begin{corollary}
\label{CrNoLimit}
For $0<s<2$ and $\bP$-almost every environment $\omega$ the sequence $\mathfrak{t}_N(\omega,X)$
has no limiting distribution as $N\to\infty$. Moreover, for $0<s<1$ and $\bP$-almost
every environment $\omega$ any distribution that can be obtained as a limit of finite linear
combinations $\sum_{j}a_j\Gamma_j$, where $a_j>0$,  can also be obtained as a weak limit of
$\mathfrak{t}_{N_k}(\omega, X)$ as $k\to\infty$, where $N_k$ depends on $\omega$.
\end{corollary}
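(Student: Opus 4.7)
The plan is to deduce Corollary \ref{CrNoLimit} from Theorem \ref{ThMain}(iii) together with the (implicit) fact, extracted from the construction of $\Theta^{(N,\delta)}$, that this point configuration is a local functional of the environment on $[0,N)$, so that its fluctuations as $N\to\infty$ are governed by the shift-ergodic measure $\bP$. In each of the three regimes $s<1,\,s=1,\,1<s<2$, Theorem \ref{ThMain}(iii) together with part (ii) says that, up to a remainder of size $O(\delta^{1-s})$ or $O(\delta^{(2-s)/2})$ and a correction depending only on $\omega$, the distribution function $F_N^\omega$ is determined by the point configuration $\Theta^{(N,\delta)}(\omega)$ via the $\Prob_\omega$-law of a sum of the form $\sum_j \Theta_j^{(N,\delta)}(\Gamma_j^{(N,\delta)}-c_s)$ with $c_s\in\{0,1\}$; the $\Gamma_j^{(N,\delta)}$ are, conditional on $\omega$ and $|\Theta^{(N,\delta)}|=k$, asymptotically independent mean-1 exponentials whose joint law does not depend on $\omega$.

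For the non-existence of an almost sure limit I would argue by contradiction. Suppose $F_N^\omega$ converged weakly on a set of positive $\bP$-measure. By the preceding reduction, for each sufficiently small $\delta>0$ the integer-valued random variable $|\Theta^{(N,\delta)}(\omega)|$ would have to stabilise as $N\to\infty$. However by Theorem \ref{ThMain}(i) and the remark following the theorem, each value $k\ge 0$ is attained with positive limiting probability $p_k=\frac{e^{-\tilde c\delta^{-s}}(\tilde c\delta^{-s})^k}{k!}$. Since $\Theta^{(N,\delta)}(\omega)$ is, up to errors which are $o_\bP(1)$, measurable with respect to the environment restricted to a window of width $O(N)$, applying the second Borel--Cantelli lemma to the dyadic subsequence $N=2^n$ (along which the relevant windows are essentially disjoint, hence $\bP$-independent) shows that for $\bP$-almost every $\omega$ each value $k$ is attained infinitely often, producing at least two subsequential weak limits of $F_N^\omega$, a contradiction.

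The second assertion (for $0<s<1$) is obtained along the same lines. Given a target $a_1,\dots,a_m>0$ and $\delta<\min_j a_j$, the event
\[
A_\varepsilon^N=\bigl\{|\Theta^{(N,\delta)}|=m\bigr\}\cap\bigl\{\text{every point of }\Theta^{(N,\delta)}\text{ lies within }\varepsilon\text{ of some }a_i\bigr\}
\]
has positive liminf probability under $\bP$ by Theorem \ref{ThMain}(i), and the same dyadic independence argument yields $\bP$-almost sure infinite recurrence of $A_\varepsilon^N$. A diagonal extraction in $\varepsilon=1/k$ produces $N_k=N_k(\omega)\to\infty$ along which $\Theta^{(N_k,\delta)}(\omega)\to\{a_1,\dots,a_m\}$; combining with the first paragraph and letting $\delta\downarrow 0$ afterwards gives $F_{N_k}^\omega\Rightarrow$ the law of $\sum_{j=1}^m a_j\Gamma_j$, and a further diagonalisation in $m$ covers any weak limit of finite positive combinations $\sum_j a_j\Gamma_j$. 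The principal technical obstacle is the localisation step: one must verify that $\Theta^{(N,\delta)}(\omega)$ depends, up to errors vanishing in probability, only on the environment in a window of width $O(N)$, so that the second Borel--Cantelli lemma applies to disjoint dyadic windows. This property is implicit in the proofs of Lemma \ref{LmPVis} and Theorem \ref{ThMain} in Sections \ref{ScPVis} and \ref{ScProof} and is the only quantitative input beyond Theorem \ref{ThMain} that the argument requires.
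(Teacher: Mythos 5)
Your overall strategy is in the right spirit (approach each target law $\sum_j a_j\Gamma_j$ infinitely often via Borel--Cantelli, which is what the paper does), but there is a genuine gap at the independence step, and the gap is not cosmetic.

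The event $\{|\Theta^{(N,\delta)}|=k\}$ is determined by the set of sites $n\in[0,N]$ with $\rho_n\geq\delta N^{1/s}$. When you pass from $N=2^n$ to $N'=2^{n+1}$, the threshold rises to $\delta(2N)^{1/s}$, but the interval $[0,N]$ is \emph{entirely contained} in $[0,N']$. A site $n\leq N$ with $\rho_n\geq\delta(2N)^{1/s}$ contributes a marked point to both configurations, and the expected number of such sites is $\approx N\cdot c\bigl(\delta(2N)^{1/s}\bigr)^{-s}=c\delta^{-s}/2$, a constant bounded away from zero, not $o(1)$. So $\Theta^{(2^n,\delta)}$ and $\Theta^{(2^{n+1},\delta)}$ share, with probability bounded below, common marked points; the windows are not ``essentially disjoint,'' and the second Borel--Cantelli lemma cannot be invoked along the dyadic sequence. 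The same objection kills the second-assertion argument verbatim.

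The fix, which is the content of the paper's own proof, is to replace the dyadic sequence by one growing fast enough that the old window is \emph{forgotten}: one chooses $N_m$ so rapidly increasing that the quenched law of $T_{N_m}/N_m^{1/s}$ agrees, outside a set of $\bP$-measure $\leq m^{-10}$, with that of $T_{[N_{m-1},N_m]}/N_m^{1/s}$, the contribution from the fresh interval $[N_{m-1},N_m]$ (this is the condition \eqref{ForgetPast}). The latter quantities are functionals of environments in \emph{genuinely disjoint} intervals, hence $\bP$-independent, and then the positive-lower-bound probability supplied by Corollary \ref{CrQLT} feeds directly into the second Borel--Cantelli lemma. Your ``localisation'' heuristic is correct in spirit, but to make it rigorous you must quantify the forgetting and let $N_m$ grow super-exponentially rather than geometrically; merely observing that $\Theta^{(N,\delta)}$ is a local functional of the environment on $[0,N]$ is not enough, since those windows are nested rather than disjoint. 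Once you patch this, the contradiction argument for the first part and the approximation argument for the second go through (the paper actually does not bother with the contradiction framing: it proves the stronger recurrence statement first and derives non-existence of a limit as an immediate consequence, which is cleaner than arguing that $|\Theta^{(N,\delta)}|$ stabilises).
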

The proof of this statement will be given in the Appendix.

\smallskip\noindent
We complete the picture by stating the result for the case $s=2$.
\begin{theorem}
\label{ThQLT} If $s=2$ then there are constants $D_1, D_2$ such that
$(\mathfrak{t}_N, \mathfrak{u}_N)$
converge weakly to $(\cN_1, \cN_2)$ where $\cN_1$ and $\cN_2$ are independent Gaussian random variables
with zero means and variances $D_1$ and $D_2$ respectively.
Apart of that, $\mathfrak{t}_N$ is asymptotically independent of the environment.
\end{theorem}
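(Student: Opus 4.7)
The plan is to prove Theorem \ref{ThQLT} by establishing separately a \emph{quenched} CLT for $\mathfrak{t}_N$ with a \emph{deterministic} limiting variance $D_1$ and an annealed CLT for $\mathfrak{u}_N$, and then to deduce the joint convergence to \emph{independent} Gaussians automatically from the fact that the first limit does not depend on $\omega$. By Lemma \ref{LmS-T} it is enough to work with $\tilde T_N=\sum_{i=1}^{N}\tau_i$ in place of $T_N$; conditionally on $\omega$ the $\tau_i$ are independent under $\Prob_\omega$, so one has a quenched sum of independent summands and can apply Bernstein's big-block-small-block method.

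First I would partition $[0,N)$ into alternating big blocks $B_k$ of length $L_N$ and small blocks $S_k$ of length $\ell_N$, with $\ell_N\to\infty$, $\ell_N/L_N\to 0$ and $L_N/N\to 0$. Combining the strong Markov property with the regeneration and exit estimates of Section \ref{ScPrelim}, one shows that, for $\omega$ in a set of probability tending to $1$, the contributions of distinct big blocks to $\tilde T_N-\EXP_\omega\tilde T_N$ are effectively independent under $\Prob_\omega$ and the total contribution of the small blocks is $o(\sqrt{N\ln N})$. The same buffering allows $\EXP_\omega\tilde T_N-\bE\tilde T_N$ to be written, up to exponentially small coupling errors, as a sum of essentially i.i.d.\ functionals of the environment, one per big block.

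To obtain the quenched CLT for the big-block part of $\mathfrak{t}_N$ I would truncate each $\tau_i$ at a level $A_N\to\infty$ with $A_N=o(\sqrt{N\ln N})$. The Lindeberg condition under $\Prob_\omega$ for the truncated sum then reduces to the self-averaging statement
\[
V_N(\omega):=\frac{1}{N\ln N}\sum_{i=1}^{N}\Var_\omega\bigl(\tau_i\wedge A_N\bigr)\xrightarrow{\bP}D_1
\]
for a deterministic constant $D_1$. Since $\Var_\omega(\tau_i\wedge A_N)$ is, up to exponentially small errors, a local functional of $\{p_j\}_{j\le i}$, the big-block sums entering $V_N(\omega)$ are nearly i.i.d.; the expected value can be evaluated from the trap picture underlying Theorem \ref{ThMain}, since a trap of strength $\theta$ contributes $\sim\theta^2$ to the quenched variance, while the $1/\theta^{1+s}$ intensity of traps at the critical exponent $s=2$ gives $\bE[\Var_\omega(\tau_i\wedge A_N)]\sim D_1\ln A_N$, precisely matching the $\sqrt{N\ln N}$ normalization. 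The untruncated remainder is shown to be $o_{\Prob_\omega}(\sqrt{N\ln N})$ for $\bP$-typical $\omega$ by a first-moment estimate based on the tail bounds of Section \ref{ScPrelim}.

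The annealed CLT for $\mathfrak{u}_N$ is then a standard truncated CLT applied to nearly i.i.d.\ block functionals of $\omega$ with the same logarithmic normalization, yielding some variance $D_2$. Joint asymptotic independence is automatic: the quenched convergence of $\mathfrak{t}_N$ to the deterministic law $\cN(0,D_1)$ means that the conditional joint law of $(\mathfrak{t}_N,\mathfrak{u}_N)$ given $\omega$ converges to $\cN(0,D_1)\otimes\delta_{\mathfrak{u}_N(\omega)}$, and integrating against the weak limit of $\mathfrak{u}_N$ produces the product law $\cN(0,D_1)\otimes\cN(0,D_2)$. The hardest step will be the self-averaging of $V_N(\omega)$ at the critical exponent $s=2$: the logarithmic normalization is borderline, so the truncation level $A_N$ and the block length $L_N$ must be tuned carefully in order that the Lindeberg condition, the concentration of $V_N$, and the negligibility of the cross-block coupling errors all hold simultaneously.
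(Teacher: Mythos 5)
Your proposal is essentially correct, but it follows a genuinely different route from the paper. The paper analyses $T_N=\sum_n\xi_n$ via the occupation-time decomposition $\sum_n(\xi_n-\rho_n)=S_L+S_M+S_H$, split by the size of $\rho_n$; throws away the high and moderate blocks ($\rho_n>\sqrt{N}/\ln^{100}N$); and applies Bernstein's big-block/small-block scheme to $S_L$, with block boundaries made exact by passing to a truncated $\txi_n$ (number of visits before reaching the next block). Because the $\xi_n$ are \emph{not} quenched-independent, the heart of the paper's argument is the covariance self-averaging expressed in statements (A) and (B) for $\Cov_\omega(\txi_{n_1},\txi_{n_2})$. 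The paper then cites \cite{GLP} for the asymptotic normality of $\mathfrak{u}_N$ and obtains joint independence from the remark that $F_N^\omega$ converges to the same deterministic Gaussian for $\omega$ in a set of full measure. You instead work with the hitting-time increments $\tau_i$, which \emph{are} quenched-independent, so the quenched CLT under $\Prob_\omega$ is directly a Lindeberg CLT — in fact the big-block/small-block apparatus is superfluous for that step; it is only needed (and is needed) for the concentration in $\bP$-probability of the truncated quenched variance $V_N(\omega)$, because the functionals $\omega\mapsto\Var_\omega(\tau_i\wedge A_N)$ for nearby $i$ are strongly dependent. Your variance calculation replaces the paper's cross-covariance sums (A)–(B) by a single truncated second-moment estimate for $\tau_i$, which is a cleaner quenched object but requires comparable tail and mixing input from Section \ref{ScPrelim}; and you propose to prove the $\mathfrak{u}_N$ CLT directly rather than quoting \cite{GLP}. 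Both routes give the logarithmic blow-up $\bE[\Var_\omega(\cdot\wedge A_N)]\asymp\ln A_N$ at $s=2$ and hence the $\sqrt{N\ln N}$ normalization, and both obtain joint independence by the same deterministic-limit observation.

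Two small points to keep in mind if you write this up in full. First, $\mathfrak{t}_N$ is defined through $T_N-\EXP_\omega(T_N)$, not $\tilde T_N-\EXP_\omega(\tilde T_N)$; Lemma \ref{LmS-T} only bounds the annealed difference $|T_N-\tilde T_N|$, and you also need an estimate on $\EXP_\omega(T_N)-\EXP_\omega(\tilde T_N)$ (which follows by a similar argument, but must be stated). Second, the quenched-independence of the $\tau_i$ makes the Lindeberg step immediate, so you should be clear that Bernstein blocking enters only in the concentration of $V_N(\omega)$ and in the CLT for $\mathfrak{u}_N$; as currently phrased the proposal suggests it is also needed to make the big-block contributions to $\tilde T_N-\EXP_\omega\tilde T_N$ independent under $\Prob_\omega$, which they already are.
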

It is well known that the reason why the hitting times do not always
satisfy the Central Limit Theorem is the presence of traps which
slow down the particle. It will be seen in the proofs that
Theorems \ref{ThMain} and \ref{ThQLT} state that if traps are
ordered according to the expected time the walker spends inside the
trap then the asymptotic distribution of traps is Poissonian with
intensity  $\frac{\bc}{\theta^{1+s}}.$ This result holds regardless
of the value of $s$. However, if $s\geq 2$ then the time spent inside
the traps is smaller than the time spent outside of the traps.

\begin{remark}
For $s=2$ the fact that $\mathfrak{u}_N$
is asymptotically normal was proved in \cite{GLP} and so to
prove Theorem \ref{ThQLT} it is enough to show that for any $\eps>0$
\begin{equation}
\label{2Q1st}
\mathbf{P}\left(\sup_x
\left|F_N^\omega(x)-F_{\cN_1}(x)\right|>\eps\right)\to 0\ \hbox{ as }\  N\to\infty.
\end{equation}
Indeed $F_N^\omega$ and $\mathfrak{u}_N=\frac{\EXP_\omega(T_N)-\bE(T_N)}{\sqrt{N\ln N}}$
are evidently asymptotically independent since the distribution of the latter
depends only on the environment and the distribution of the former is asymptotically
the same for the set of $\omega$s of asymptotically full measure.
\end{remark}

Let as before $\xi_n$ be the number
of visits to $n$ and $\xi_N^*=\max_{[0,N]} \xi_n.$

\begin{theorem}
\label{ThRec}
If $s>0$ then
$\frac{\xi_N^*}{N^{1/s}}$ converges to $\max_j \brTheta_j$, where
$\brTheta$ is a  Poisson process on $(0,\infty)$ with intensity $\frac{\brbc}{\theta^{1+s}}$ for some constant $\brbc.$
Accordingly
$$ \bP\left(\xi_N^*<x N^{1/s}\right)\to\exp\left[-\frac{\brbc}{s} x^{-s}\right]. $$
\end{theorem}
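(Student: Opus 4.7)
The plan is to modify the proof of Theorem \ref{ThMain} so that it tracks site-level occupation times $\xi_n$ rather than trap totals. Set $\mu_n(\omega):=\EXP_\omega(\xi_n)$. Conditional on $\omega$, the strong Markov property applied at each visit to $n$ shows that $\xi_n$ is geometrically distributed with minimum value 1 (transience to $+\infty$) and mean $\mu_n$, so $\Prob_\omega(\xi_n>t)=(1-1/\mu_n)^{\lfloor t\rfloor}$; in particular $\xi_n/\mu_n\Rightarrow\mathrm{Exp}(1)$ whenever $\mu_n\to\infty$.

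First I would establish a site-level analog of Lemma \ref{LmPVis}: the point process $\{\mu_n/N^{1/s}:0\le n<N,\ \mu_n>\delta N^{1/s}\}$ converges weakly, as $N\to\infty$, to a Poisson process on $[\delta,\infty)$ with intensity $c'/\theta^{1+s}$ for some $c'>0$. The tail $\bP(\mu_0>t)\sim C_* t^{-s}$ is obtained from Kesten's renewal theorem applied to the standard geometric-series representation of $\mu_n$ in terms of the $\rho_k=q_k/p_k$ (this is where assumption (D) is used). The spatial Poisson structure then follows from the same second-moment/scan argument used in the proof of Lemma \ref{LmPVis}: significant sites correspond to bottoms of deep traps and are, with high probability, separated by gaps of macroscopic order, so distant significant sites are effectively independent.

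Next, by the strong Markov property applied successively at the last visit to each significant site (ordered left-to-right), the family $\{\xi_n/\mu_n : n \text{ significant}\}$ is asymptotically a collection of i.i.d.\ $\mathrm{Exp}(1)$ random variables, asymptotically independent of $\omega$ --- the site-level analog of property (ii) of Theorem \ref{ThMain}. Combining the two facts, the marked process $\{(\mu_n/N^{1/s},\,\xi_n/\mu_n)\}$ converges weakly to a marked Poisson process $(\Theta,\Gamma)$ on $[\delta,\infty)\times(0,\infty)$ with intensity $c'\theta^{-1-s}\,d\theta\cdot e^{-\gamma}\,d\gamma$. Pushing forward via $(\theta,\gamma)\mapsto \theta\gamma$ gives a Poisson process on $(0,\infty)$ whose intensity at $z$ is
$$ \int_0^\infty \frac{c'}{\theta^{2+s}}\, e^{-z/\theta}\, d\theta \;=\; \frac{c'\,\Gamma(s+1)}{z^{1+s}} \;=:\; \frac{\brbc}{z^{1+s}}, $$
and the maximum of this process has the Fr\'echet distribution $\exp[-(\brbc/s)x^{-s}]$, as claimed.

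Finally, to pass from the max over significant sites to $\xi_N^*$ one must control the subcritical contribution. The quenched bound $\Prob_\omega(\xi_n>t)\le e^{-t/\mu_n}$ together with the Kesten tail density $-dF/dm\sim sC_* m^{-s-1}$ gives, via an annealed union bound,
$$ \bP\big(\max_{n:\mu_n\le \delta N^{1/s}} \xi_n > \eta N^{1/s}\big) \;\lesssim\; s C_* \eta^{-s}\int_{\eta/\delta}^\infty v^{s-1}e^{-v}\,dv, $$
which tends to $0$ as $\delta\to 0$, uniformly in $N$. Letting $\delta\to 0$ then yields the stated convergence. I expect the main obstacle to be Step 1: within a single deep trap the values $\mu_n$ decay geometrically as one climbs away from the bottom, so several consecutive sites can survive the threshold $\delta N^{1/s}$. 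One has to disentangle these clusters and show that the limit of the resulting point process is genuinely Poisson, and along the way identify the constant $c'$ (equivalently $\brbc$) in terms of the same renewal constant that produces $\brc$ in Theorem \ref{ThMain}.
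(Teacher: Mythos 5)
Your architecture matches the paper's: establish a $\delta$-truncated Poisson limit for a marked point process of large expected occupation times, attach i.i.d.\ $\mathrm{Exp}(1)$ marks coming from the quenched geometric law of $\xi_n$, push forward under $(\theta,\gamma)\mapsto\theta\gamma$ to get a Fr\'echet law, and then dispose of the subcritical sites by an annealed union bound. The subcritical tail estimate you give (integrating $e^{-t/\mu_0}$ against the Kesten density) is essentially a slicker version of the paper's dyadic decomposition in $\rho_n\in[N^{1/s}2^{-(k+1)},N^{1/s}2^{-k}]$, and your intensity computation $\int_0^\infty c'\theta^{-2-s}e^{-z/\theta}\,d\theta=c'\Gamma(s+1)z^{-1-s}$ is correct and used in the same way the paper invokes Lemma~\ref{LmPT}.

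However, there is a genuine gap in your Steps 1 and 2, and it is not just the ``main obstacle'' you flag for Step 1 --- it poisons Step 2 as well. The point process $\{\mu_n/N^{1/s}:\mu_n>\delta N^{1/s}\}$ over \emph{all} sites is not asymptotically Poisson: inside a single trap the ratios $\mu_{n-1}/\mu_n$ are $\Theta(1)$, so with probability bounded away from $0$ several consecutive sites exceed the threshold, and the resulting limit is a cluster process, not a simple Poisson process. The same clustering breaks your Step 2: by Lemma~\ref{LmCorShort} the quenched correlation of $\xi_n$ and $\xi_{n+1}$ is $\ge1-C/\rho_n\to1$, so the family $\{\xi_n/\mu_n : n\ \text{significant}\}$ is emphatically \emph{not} asymptotically i.i.d.; two significant sites in the same cluster have $\xi_n/\mu_n\approx\xi_{n'}/\mu_{n'}$. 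You cannot compute $\max_n\xi_n$ over all significant sites by pretending they are independent. The paper's resolution is exactly the disentanglement you gesture at but do not carry out: one selects a single representative per cluster, namely the argmax $\hn_j$ of $\rho_n$ within cluster $j$, marks it with $\hm_j=\max_{\text{cluster}}\rho_n$, and then uses the same strong within-cluster correlation of Lemma~\ref{LmCorShort} (now as a \emph{tool} rather than an obstacle) to show that the maximum of $\xi_n$ over the whole cluster is, with high probability, attained at $\hn_j$ and equals $\hm_j\cdot\xi_{\hn_j}/\hm_j$. With that in place the Poisson limit for $\{\hm_j/N^{1/s}\}$ (proved like Lemmas~\ref{LmPVis}, \ref{LmPTheta}) and the asymptotic independence and exponentiality of $\{\xi_{\hn_j}/\hm_j\}$ across \emph{distinct} clusters (via Lemma~\ref{LmCorLong}) together give the required marked Poisson structure, and the rest of your proposal goes through unchanged. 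Note also that the constant you call $c'$ (and hence $\brbc=c'\Gamma(s+1)$) is determined by the tail law of $\max_{\text{cluster}}\rho_n$, \emph{not} by the marginal tail of a single $\rho_n$; this is a different constant from the $\brc$ in Theorem~\ref{ThMain}, which is governed by the cluster \emph{total} $m_n=\sum_{j\le M}\rho_{n-j}$.
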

Theorem \ref{ThRec} shows that the fact that traps are Poisson distributed is useful even for $s>2.$
\begin{corollary}
\label{CrRec}
If $0<s<1$ then as $N\to\infty$
$$ \lim\sup \frac{\xi_N^*}{T_N}>0 $$
almost surely.
\end{corollary}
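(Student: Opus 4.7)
The plan is to deduce Corollary \ref{CrRec} from the \emph{separate} weak limits of $\xi_N^*/N^{1/s}$ and $T_N/N^{1/s}$ supplied by Theorems \ref{ThMain} and \ref{ThRec}: no joint convergence is required. The intuition is that for $0<s<1$ both $\xi_N^*$ and $T_N$ are of order $N^{1/s}$, so with probability close to one the ratio $\xi_N^*/T_N$ exceeds a fixed positive number; an elementary reverse-Fatou inequality then promotes this to $\limsup_N \xi_N^*/T_N>0$ almost surely.

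First I would use Theorem \ref{ThMain}(iii)(a) together with the $L^1$ bound $\bE(R_N)=O(\delta^{1-s})$ to exchange the limits $N\to\infty$ and $\delta\to 0$, obtaining that $T_N/N^{1/s}$ converges weakly under $\bP$ to the positive $s$-stable variable $S:=\sum_j \Theta_j\Gamma_j$; here $S<\infty$ a.s.\ because for $s<1$ the intensity $\brc\,\theta^{-1-s}d\theta$ satisfies $\int_0^1\theta\,\nu(d\theta)<\infty$ while the tail is automatically integrable. By Theorem \ref{ThRec}, $\xi_N^*/N^{1/s}$ converges weakly to $M:=\max_j \brTheta_j$, which lies in $(0,\infty)$ almost surely (positive since the total intensity diverges at $0$, finite since the intensity is integrable on $[1,\infty)$). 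Given $\varepsilon>0$, choose $a>0$ and $b<\infty$, both continuity points of the respective limit laws, with $\bP(M>a)\ge 1-\varepsilon$ and $\bP(S<b)\ge 1-\varepsilon$. Weak convergence then yields, for all sufficiently large $N$,
$$\bP\!\left(\xi_N^*/N^{1/s}>a\right)\ge 1-2\varepsilon,\qquad \bP\!\left(T_N/N^{1/s}<b\right)\ge 1-2\varepsilon,$$
and a union bound gives $\bP(\xi_N^*/T_N>a/b)\ge 1-4\varepsilon$.

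To conclude, apply the elementary inequality $\bP(A_N\text{ i.o.})\ge \limsup_N \bP(A_N)$ (Fatou's lemma for sets):
$$\bP\!\left(\limsup_{N}\xi_N^*/T_N\ge a/b\right)\ \ge\ \bP\!\left(\xi_N^*/T_N>a/b\ \text{i.o.}\right)\ \ge\ 1-4\varepsilon.$$
Since $a/b>0$, this forces $\bP(\limsup_N \xi_N^*/T_N>0)\ge 1-4\varepsilon$; letting $\varepsilon\to 0$ yields the corollary. The only step that is more than mechanical is the justification of the weak limit of $T_N/N^{1/s}$, where one must quote Theorem \ref{ThMain} and carry out the double-limit exchange just described; the a.s.\ finiteness of $S$ for $s<1$ is a standard feature of $s$-stable laws, and the a.s.\ positivity and finiteness of $M$ are standard Poisson facts. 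No sharper tool (such as a zero-one law or joint convergence) is needed.
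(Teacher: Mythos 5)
Your proof is correct, but it follows a genuinely different---and more elementary---route than the paper's. The paper argues along a rapidly growing subsequence $N_k$: it modifies the walk to erase visits to $[0,N_k]$ after $\tilde T_{N_{k+1}}$, uses this to decouple the behaviour over successive blocks, and then applies the second Borel--Cantelli lemma to conclude $\limsup_N \xi_N^*/T_N \ge c_1$ a.s.\ for a \emph{fixed} constant $c_1>0$. You instead observe that $\xi_N^*/N^{1/s}$ and $T_N/N^{1/s}$ each have an annealed weak limit supported on $(0,\infty)$ (by Theorem \ref{ThRec}, and by Theorem \ref{ThAnn}(a) together with Lemma \ref{LmS-T}), combine these with a union bound to get $\bP(\xi_N^*/T_N>a/b)\ge 1-4\eps$ for all large $N$, and then invoke the reverse Fatou inequality $\bP(\limsup A_N)\ge\limsup\bP(A_N)$. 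This bypasses the decoupling construction and the independence bookkeeping entirely. What you give up is the deterministic lower bound on the $\limsup$: your threshold $a/b$ depends on $\eps$ and shrinks to $0$, so you do not recover $\limsup\ge c_1$ a.s.\ for a single fixed $c_1$. But since $\{\limsup>0\}\supset\{\limsup\ge a/b\}$ for every $\eps$, letting $\eps\to0$ still yields $\bP(\limsup>0)=1$, which is precisely what the corollary claims, so there is no gap. One stylistic remark: rather than re-deriving the weak limit of $T_N/N^{1/s}$ from Theorem \ref{ThMain}(iii)(a) by a $\delta\to0$ interchange, it is cleaner simply to cite Theorem \ref{ThAnn}(a) and Lemma \ref{LmS-T}; your interchange argument is essentially the paper's own Appendix A.1 and is sound, but it is more work than the step requires.
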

\begin{remark} Corollary \ref{CrRec} is a minor modification of the result of \cite{GS}.
Namely, in \cite{GS} the authors consider not all visits to site $n$
but only visits before $\tilde{T}_N.$ By Lemma \ref{LmS-T} this
difference is not essential since most visits occur before $\tilde{T}_N$.
\end{remark}

\section{Preliminaries.}
\label{ScPrelim}
\subsection{Poisson process.}\label{SSPP}
The proofs of the facts listed below can be found in monographs \cite{Resnick, SamT}.

Let $(X, \mu)$ be a measure space.
Recall that a Poisson process is a point process on $X$ such that \newline (a)
if $A\subset X$, $\mu(A)$ is finite, and $N(A)$ is the number of points in $A$
then $N(A)$ has a Poisson distribution with parameter $\mu(A)$;\newline (b)
if $A_1, A_2\dots A_k$ are disjoint subsets of $X$ then $N(A_1), N(A_2)\dots N(A_k)$
are mutually independent.

If $X\subset \reals^d$ and $\mu$ has a density $f$ with respect to the Lebesgue measure we
say that $f$ is the intensity of the Poisson process.
\begin{lemma}
\label{LmPT}
(a) If $\{\Theta_j\}$ is a Poisson process on $X$
and $\psi: X\to \tX$ is a measurable map
then $\tTheta_j=\psi(\Theta_j)$ is a Poisson process.
If $X=\tX=\reals$ and $\psi$ is invertible then the intensity of $\tTheta$ is
\begin{equation}
\label{ChangeInt}
\tf(\theta)=f(\psi^{-1}(\theta)) \left|\frac{d\psi}{d\theta}\right|^{-1}.
\end{equation}

(b) Let $(\Theta_j, \Gamma_j)$ be a point process on $X\times Z$ such that
$\{\Theta_j\}$ is a Poisson process on $X$
and $\{\Gamma_j\}$ are $Z$-valued random variables
which are i.i.d. and independent of $\{\Theta_k\}$.
Then $(\Theta_j, \Gamma_j)$ is a Poisson process on $X\times Z.$

(c) If in (b) $X=Z=\reals$ then
$\tTheta=\{\Gamma_j \Theta_j\}$ is a Poisson process. Its intensity is
$$ \tf(\theta)=
E_\Gamma \left(f\left(\frac{\theta}{\Gamma}\right)\frac{1}{\Gamma}\right) .$$
\end{lemma}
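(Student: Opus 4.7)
My plan is to establish parts (a) and (b) independently and then derive part (c) from them. Since the lemma is a collection of standard textbook facts, the main effort is in organizing the argument cleanly; for the underlying characterizations of Poisson processes I would rely on the references \cite{Resnick,SamT} already cited in the section.

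For part (a) I would argue directly from the definition via pushforward. Let $\mu$ denote the intensity measure of $\{\Theta_j\}$ and set $\tmu:=\psi_*\mu$ on $\tX$. For any measurable $A\subset\tX$ with $\tmu(A)<\infty$, the image count $\tilde{N}(A):=\#\{j:\psi(\Theta_j)\in A\}$ equals $N(\psi^{-1}(A))$, which is Poisson with parameter $\mu(\psi^{-1}(A))=\tmu(A)$. Since $\psi^{-1}$ preserves disjointness, the independence property of the original process transfers to $\{\tTheta_j\}$, so $\{\tTheta_j\}$ is Poisson with intensity $\tmu$. In the case $X=\tX=\reals$ with $\psi$ a $C^1$ diffeomorphism, the one-dimensional change-of-variables formula applied to $\tmu(A)=\int_{\psi^{-1}(A)} f(\theta)\,d\theta$ yields the density displayed in \eqref{ChangeInt}.

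For part (b) I would use the Laplace functional characterization: a point process $\Xi$ on a measurable space $Y$ is Poisson with intensity measure $\lambda$ iff
$$E\exp\!\Bigl(-\sum_j F(\Xi_j)\Bigr)=\exp\!\Bigl(-\int_Y(1-e^{-F})\,d\lambda\Bigr)$$
for every nonnegative measurable $F$ with $\int(1-e^{-F})\,d\lambda<\infty$. Writing the functional of the marked process as an iterated expectation (first over $\{\Gamma_j\}$ conditional on $\Theta$, using that the $\Gamma_j$ are i.i.d.\ with some law $\nu$ and independent of $\Theta$) gives
$$E\exp\!\Bigl(-\sum_j F(\Theta_j,\Gamma_j)\Bigr)=E_\Theta\prod_j G(\Theta_j),\quad G(x):=E_\Gamma e^{-F(x,\Gamma_1)},$$
and applying the Poisson characterization to $\{\Theta_j\}$ produces $\exp\!\bigl(-\iint(1-e^{-F(x,z)})\,d\mu(x)\,d\nu(z)\bigr)$, identifying the intensity of $(\Theta_j,\Gamma_j)$ as the product $\mu\times\nu$.

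Part (c) is then an easy consequence of (a) and (b): by (b), $\{(\Theta_j,\Gamma_j)\}$ is Poisson on $\reals^2$ with intensity $f(\theta)\,d\theta\,d\nu(\gamma)$; applying (a) with $\psi(\theta,\gamma)=\theta\gamma$ shows that $\{\Theta_j\Gamma_j\}$ is Poisson on $\reals$. To extract its intensity I would compute the pushforward directly: for Borel $B\subset\reals$,
$$\tmu(B)=\iint 1_B(\theta\gamma)\,f(\theta)\,d\theta\,d\nu(\gamma),$$
and the substitution $t=\theta\gamma$ in the inner integral (with $\gamma\ne 0$) gives $\tmu(B)=\int_B \tf(t)\,dt$ with $\tf(t)=E_\Gamma\bigl[f(t/\Gamma)\,|\Gamma|^{-1}\bigr]$, which matches the stated formula (in this paper $\Gamma>0$, so $|\Gamma|=\Gamma$). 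No genuine obstacle arises; the only mildly delicate ingredient is the Laplace-functional characterization invoked in (b), which I would not reprove but quote from the standard sources.
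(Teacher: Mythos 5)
The paper does not actually prove Lemma~\ref{LmPT}; it invokes the standard references with the remark ``Proofs of the statements listed in Lemmas~\ref{LmPT} and~\ref{PChar} can be found in~\cite{Resnick}.'' Your write-up is a correct, self-contained rendering of exactly those textbook facts --- the mapping theorem for (a), the marking theorem via Laplace functionals for (b), and (c) as their composition with a change of variables --- so it matches the paper's intent.

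One small caveat worth being aware of, though it does not affect the applications in this paper: as stated, part~(a) needs the pushforward measure $\psi_*\mu$ to be $\sigma$-finite and nonatomic (otherwise $\psi$ could collapse distinct points of the process to the same image point, producing multiplicities, so $\{\psi(\Theta_j)\}$ would fail to be a simple Poisson process). Your argument silently uses this when transferring the independence-on-disjoint-sets property. The lemma as printed in the paper omits the hypothesis too, and in every use here $\psi$ is a diffeomorphism of $\reals$ or the product map $(\theta,\gamma)\mapsto\theta\gamma$ restricted to $(0,\infty)^2$, where the pushforward is absolutely continuous, so nothing goes wrong.
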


\begin{lemma}
\label{PChar}
Let $\Theta$ be Poisson process on $X,$ $\psi:X\to\reals$ a measurable function with
$\int |\psi(\theta)| d\mu(\theta)<\infty$ then
$$ V=\sum_j \psi(\theta_j)$$
is finite with probability 1, the characteristic function of $V$ is given by
\begin{equation}\label{char}
E_{\Theta}(\exp(ivV))=\exp\left[\int\left(e^{iv\psi(\theta)}-1\right)d\mu(\theta)\right],
\end{equation}
and
\begin{equation}
\label{PExp1}
E_{\Theta}(V)=\int \psi(\theta) d\mu(\theta).
\end{equation}
If in addition to the above conditions  $\int \psi^2(\theta) d\mu(\theta)<\infty$ then
\begin{equation}
\label{PExp2}
\Var_{\Theta}(V)=\int \psi^2(\theta) d\mu(\theta)
\end{equation}
\end{lemma}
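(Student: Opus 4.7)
The plan is to prove \eqref{char} first for simple functions by direct computation using the defining properties (a), (b) of a Poisson process, then to extend to a general integrable $\psi$ by approximation, and finally to read off \eqref{PExp1} and \eqref{PExp2} from \eqref{char}.

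The starting point is that if $\psi=\sum_{k=1}^{n}a_k\mathbf{1}_{A_k}$ with the $A_k$ disjoint and $\mu(A_k)<\infty$, then $V=\sum_k a_k N(A_k)$. By property (b) the counts $N(A_k)$ are independent, and by (a) each is Poisson with parameter $\mu(A_k)$, whose characteristic function is $\exp[\mu(A_k)(e^{it}-1)]$. Multiplying these together gives
\[
E_{\Theta}(e^{ivV})=\prod_{k}\exp\!\bigl[\mu(A_k)(e^{iva_k}-1)\bigr]=\exp\!\left[\int (e^{iv\psi(\theta)}-1)\,d\mu(\theta)\right],
\]
which is \eqref{char} in this case.

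For general $\psi$ with $\int|\psi|\,d\mu<\infty$, I would split $\psi=\psi^+-\psi^-$ and approximate each by an increasing sequence of simple functions $\psi_n\to\psi$ pointwise with $|\psi_n|\le|\psi|$. Almost sure finiteness of $V$ comes from the observation that, partitioning $X$ into shells $B_k=\{2^{-k-1}\le|\psi|<2^{-k}\}\cup\{|\psi|\ge 2^{k}\}$ and using property (a), the number of points falling in the ``large $|\psi|$'' region $\{|\psi|\ge 1\}$ is a.s. finite (since $\mu(\{|\psi|\ge 1\})\le\int|\psi|d\mu<\infty$), while on $\{|\psi|<1\}$ the points form a Poisson process whose total contribution has finite expectation $\int_{|\psi|<1}|\psi|d\mu$ by the monotone convergence argument applied to the simple-function case. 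Having secured a.s. finiteness, the identity \eqref{char} extends to $\psi$ by dominated convergence: on the right the integrand is bounded in absolute value by $|v|\,|\psi(\theta)|$ uniformly in $n$ (using $|e^{it}-1|\le|t|$), and on the left $V_n=\sum_j\psi_n(\theta_j)\to V$ a.s., so $e^{ivV_n}\to e^{ivV}$ boundedly.

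Finally, \eqref{PExp1} and \eqref{PExp2} are obtained by expanding both sides of \eqref{char} in $v$. For the mean, differentiating the log of the right-hand side at $v=0$ gives $\int\psi\,d\mu$, and the integrability $\int|\psi|d\mu<\infty$ justifies differentiation under the integral. Under the additional hypothesis $\int\psi^2d\mu<\infty$ one can differentiate once more to get $\Var_{\Theta}(V)=\int\psi^2d\mu$; alternatively, for simple $\psi$ the variance formula is immediate from independence of the $N(A_k)$ and the Poisson variance identity $\Var N(A_k)=\mu(A_k)$, and one then passes to the limit using $L^2$-approximation of $\psi$ by simple functions. The main (mild) obstacle is bookkeeping the approximation so that the a.s. and $L^1$ (or $L^2$) convergences line up; everything else is a direct consequence of independence on disjoint sets plus the Poisson characteristic function.
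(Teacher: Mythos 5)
The paper does not actually prove this lemma: immediately after Lemma~\ref{LmPSt} there is a remark saying that the proofs of Lemmas~\ref{LmPT} and~\ref{PChar} can be found in Resnick's book, so there is no in-paper argument to compare against. Your proof is the standard one and is essentially correct: (i) establish \eqref{char} for simple $\psi$ via independence of the counts $N(A_k)$ on disjoint sets and the Poisson characteristic function; (ii) pass to general integrable $\psi$ by monotone/dominated approximation, using $|e^{it}-1|\le|t|$ on the right and a.s.\ absolute summability of $\sum_j\psi(\theta_j)$ on the left; (iii) obtain \eqref{PExp1} and \eqref{PExp2} by differentiating the exponent at $v=0$, the hypotheses $\int|\psi|\,d\mu<\infty$ and $\int\psi^2\,d\mu<\infty$ justifying the interchange.

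Two small points worth tightening. First, your ``shell'' sets $B_k=\{2^{-k-1}\le|\psi|<2^{-k}\}\cup\{|\psi|\ge 2^k\}$ are not actually used and the written form is a bit garbled; the a.s.\ finiteness argument only needs the split into $\{|\psi|\ge1\}$ (finitely many points, since $\mu(\{|\psi|\ge1\})\le\int|\psi|\,d\mu<\infty$) and $\{|\psi|<1\}$ (where $E_\Theta\sum_{|\psi(\theta_j)|<1}|\psi(\theta_j)|\le\int|\psi|\,d\mu<\infty$ by Campbell for nonnegative functions). Second, when approximating $\psi^\pm$ by simple functions, you should note explicitly that every nonzero level set appearing has finite $\mu$-measure because $\mu(\{\psi^\pm>c\})\le c^{-1}\int\psi^\pm\,d\mu<\infty$ for $c>0$; this is what makes the simple-function case of \eqref{char} applicable at every stage. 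With those bookkeeping remarks made, the argument is complete.
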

\begin{remark} Proofs of the statements listed in Lemmas \ref{LmPT} and \ref{PChar} can be found in \cite{Resnick}.
\end{remark}
\begin{lemma}
\label{LmPSt}
(a) If $0<s<1$ and $\Theta_j$ is a Poisson process with intensity $\theta^{-(1+s)}$ then
$ \sum_j \Theta_j $ has a stable distribution of index $s.$

(b) If $1<s<2$ and $\Theta_j$ is a Poisson process with intensity $\theta^{-(1+s)}$ then
$$ \lim_{\delta\to 0}
\left[\left(\sum_{\delta<\Theta_j} \Theta_j\right)-\frac{1}{(s-1)\delta^{s-1}} \right]$$
has a stable distribution of index $s.$

(c) If $s=1$ and $\Theta_j$ is a Poisson process with intensity $\theta^{-2}$ then
$$ \lim_{\delta\to 0} \left[\left(\sum_{\delta<\Theta_j} \Theta_j\right)-|\ln\delta| \right]$$
has a stable distribution of index $1.$
\end{lemma}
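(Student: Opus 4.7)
The plan is to compute the characteristic function of each random sum using formula \eqref{char} of Lemma \ref{PChar} applied to the truncated process $\Theta^{(\delta)}$, and then pass to the limit $\delta\to 0$. Write $V_\delta:=\sum_{\Theta_j>\delta}\Theta_j$. Applying \eqref{char} with $\psi(\theta)=\theta$ on the measure space $((\delta,\infty),\theta^{-(1+s)}d\theta)$ gives
$$
E_\Theta\!\left(e^{ivV_\delta}\right)=\exp\!\left[\int_\delta^\infty(e^{iv\theta}-1)\,\theta^{-(1+s)}\,d\theta\right].
$$
After subtracting the appropriate deterministic constant $m_\delta$ from $V_\delta$, the exponent becomes an integral of $(e^{iv\theta}-1-iv\theta\,\chi(\theta))\theta^{-(1+s)}$ for a suitable cut-off $\chi$. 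The aim is to show this converges, as $\delta\to 0$, to the L\'evy--Khintchine exponent of a stable law of index $s$; the substitution $\theta=u/|v|$ then shows the limiting exponent has the form $|v|^s\Psi(\mathrm{sgn}(v))$, which is exactly the characteristic function of a stable distribution of index $s$.

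For case (a) with $0<s<1$ no centering is required: near $0$ the integrand is $O(|v|\theta^{-s})$, integrable since $s<1$, while at infinity it is $O(\theta^{-(1+s)})$, so the integral converges absolutely on $(0,\infty)$ by dominated convergence. For case (b) with $1<s<2$ take $m_\delta:=\int_\delta^\infty\theta\cdot\theta^{-(1+s)}\,d\theta=\frac{1}{(s-1)\delta^{s-1}}$, which is precisely the centering in the statement and which also equals $E_\Theta(V_\delta)$ by \eqref{PExp1}; the resulting centered integrand $(e^{iv\theta}-1-iv\theta)\theta^{-(1+s)}$ is $O(v^2\theta^{1-s})$ near zero (integrable for $s<2$) and $O(|v|\theta^{-s})$ at infinity (integrable for $s>1$). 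For case (c) with $s=1$ split the integral at $\theta=1$: the centering $|\ln\delta|$ equals $\int_\delta^1\theta\cdot\theta^{-2}\,d\theta$, which tames the singularity of the $[\delta,1]$ integrand to yield an integrand $(e^{iv\theta}-1-iv\theta)\theta^{-2}=O(v^2)$ near $0$, while the tail $\int_1^\infty(e^{iv\theta}-1)\theta^{-2}\,d\theta$ is convergent without further modification.

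The computation is essentially a verification of classical L\'evy--Khintchine theory, and the principal obstacle is the bookkeeping of centering constants relative to the cut-off $\delta$: for (b) and (c) one must separate the drift contribution coming from the linear term $iv\theta$ on $[1,\infty)$ (finite for (b), infinite for (c), which forces the different choice of centering), and verify that the difference between $m_\delta$ and the canonical centering $\int\theta\,\chi(\theta)\,\theta^{-(1+s)}\,d\theta$ produces only a deterministic shift in the limit. Once this is sorted out, the scale invariance of the measure $\theta^{-(1+s)}\,d\theta$ under dilations $\theta\mapsto\lambda\theta$, together with \eqref{ChangeInt}, produces the characteristic self-similarity of the limit and identifies it as a stable law of index $s$; convergence in distribution of $V_\delta-m_\delta$ to this stable limit then follows from L\'evy's continuity theorem applied to the computed characteristic functions.
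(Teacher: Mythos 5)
Your proof is correct and follows essentially the same route the paper indicates in its remark after Lemma~\ref{LmPSt}: a direct computation of the characteristic function via formula~\eqref{char}, with the appropriate truncation-dependent centering, yielding the L\'evy--Khintchine exponent of a stable law of index $s$. The scaling substitution $\theta\mapsto u/|v|$ that exhibits the $|v|^s$ self-similarity is the standard way to identify the index, and your bookkeeping of the centerings in the three regimes is exactly what is needed.
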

\begin{remark} The proof of Lemma \ref{LmPSt} follows from a direct computation
of the characteristic function of the relevant sums in (a), (b), (c) using formula (\ref{char}).
We also note that the expressions under the limit sign in (b) and (c) are equal to
$\sum_{\delta<\Theta_j} \Theta_j-E_{\Theta}\left(\sum_{\delta<\Theta_j} \Theta_j\right)$. One
thus could say that the existence of the limit means that the series
$\sum_{j} (\Theta_j-E_{\Theta}(\Theta_j))$ converges. However, for this interpretation of
one has to introduce an ordering relation on the random sets $\{\Theta_j\}$ (see \cite{SamT}).
\end{remark}

\subsection{Backtracking.}
\begin{lemma}
\label{LmBack}
(\cite{GS}, Lemma 3.3) There exist $C>0, \beta<1$ such that
$$\bP(X \text{ visits } n \text { after } n+m)\leq C\beta^m.$$
\end{lemma}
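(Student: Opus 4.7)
The plan is to reduce the claim, via the strong Markov property, to controlling the quenched probability that a walk started at $n+m$ ever visits $n$, and then estimate that probability using the standard birth--death harmonic-function formula together with the moment bound in assumption (B).

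First I would apply the strong Markov property at the first hitting time of $n+m$ to obtain
$$\bP(X\text{ visits }n\text{ after }n+m)\leq \bE\!\left[h_\omega(m)\right],$$
where $h_\omega(m)$ is the quenched probability that a walk starting at $n+m$ ever hits $n$. As the unique bounded harmonic function on $\{n,n+1,\dots\}$ with boundary values $1$ at $n$ and $0$ at $+\infty$, $h_\omega$ is given explicitly by
$$h_\omega(m)=\frac{\sum_{i=n+m}^{\infty}\pi_i}{\sum_{i=n}^{\infty}\pi_i},\qquad \pi_n:=1,\quad \pi_i:=\prod_{j=n+1}^{i}\frac{q_j}{p_j}\ (i\geq n+1).$$
Bounding the denominator below by its $i=n$ term and factoring $\pi_{n+m}$ out of the numerator gives
$$h_\omega(m)\leq \pi_{n+m}\,Z_{n+m},\qquad Z_{n+m}:=\sum_{k=0}^{\infty}\prod_{j=n+m+1}^{n+m+k}\frac{q_j}{p_j}.$$

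Since $h_\omega(m)\leq 1$, for any $h\in(0,1]$ one also has $h_\omega(m)\leq \pi_{n+m}^{h}\,Z_{n+m}^{h}$. Crucially, $\pi_{n+m}$ and $Z_{n+m}$ depend on disjoint blocks of the i.i.d.\ environment, so they are independent under $\bP$. With $\phi(h):=\bE(q/p)^h$,
$$\bP(X\text{ visits }n\text{ after }n+m)\leq \bE\pi_{n+m}^{h}\cdot\bE Z_{n+m}^{h}=\phi(h)^{m}\cdot \bE Z_{n+m}^{h}.$$

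By assumption (B) one has $\phi(0)=\phi(s)=1$, and $\phi$ is convex, so $\phi(h)<1$ for every $h\in(0,s)$. I would then pick any $h\in(0,\min(1,s))$, which is possible since $s>0$. For such an $h$ the subadditivity inequality $(\sum a_k)^h\leq \sum a_k^h$, valid on $[0,\infty)$ when $h\in(0,1]$, yields
$$\bE Z_{n+m}^{h}\leq \sum_{k=0}^{\infty}\bE\!\left(\prod_{j=1}^{k}\frac{q_j}{p_j}\right)^{\!h}=\sum_{k=0}^{\infty}\phi(h)^{k}=\frac{1}{1-\phi(h)}<\infty.$$
This completes the proof with $\beta:=\phi(h)$ and $C:=(1-\phi(h))^{-1}$. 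The only mild point that requires attention is the simultaneous choice of $h$ in $(0,s)$ and $(0,1]$; aside from this, I do not foresee any serious obstacle, and no assumption beyond (A)--(B) is used.
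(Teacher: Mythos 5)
The paper does not prove this lemma but simply cites \cite{GS}, so there is no in-text proof to compare against. Your argument is correct and self-contained: the reduction via the strong Markov property, the birth--death escape-probability formula $h_\omega(m)=\bigl(\sum_{k\geq n+m}\pi_k\bigr)/\bigl(\sum_{k\geq n}\pi_k\bigr)$, the bound $h_\omega(m)\leq\pi_{n+m}Z_{n+m}$, raising to a power $h\in(0,\min(1,s))$ with $\phi(h)=\bE(q/p)^h<1$ (which uses both (A) and (B) via strict convexity of $\phi$ and $\phi(0)=\phi(s)=1$), the independence of $\pi_{n+m}$ and $Z_{n+m}$, and the subadditivity $(\sum a_k)^h\leq\sum a_k^h$ to control $\bE Z_{n+m}^h$ — all check out, yielding $\beta=\phi(h)$ and $C=(1-\phi(h))^{-1}$. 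This is the standard backtracking estimate and matches the flavour of the argument in \cite{GS}.
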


\subsection{Occupation times. Recurrence relation.}
As before, let $\xi_n$ be the number of visits to the site $n$ and
$\rho_n=\EXP_\omega \xi_n$. Observe that $\xi_n$  has geometric distribution
with parameter $1/\rho_n$.
\begin{lemma}
\label{Lm3.1} If $X_0=0$ then for $n\ge0$
\begin{equation} \label{OTRR}
\rho_n=p_n^{-1}q_{n+1}\rho_{n+1}+p_n^{-1}=p_n^{-1}(1+\alpha_{n+1}+
\alpha_{n+1}\alpha_{n+2}+...),
\end{equation}
where $\alpha_j=\frac{q_j}{p_j}.$
\end{lemma}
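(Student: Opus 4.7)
I would first derive the three-term recurrence (the first equality) from a conservation-of-flow identity, and then obtain the series (the second equality) via a standard gambler's-ruin computation, which is cleaner than iterating the recurrence.

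Fix $n \geq 0$ and let $U_n$, $D_n$ denote the numbers of transitions $n \to n+1$ and $n+1 \to n$ along $X$. Since $X$ starts at $0 \leq n$ and is $\bP$-a.s.\ transient to $+\infty$ by assumption (A), crossings of the edge $\{n, n+1\}$ pair as consecutive up/down pairs followed by exactly one unmatched final up-crossing, so $U_n = D_n + 1$ pathwise. By the strong Markov property applied at successive visits to $n$, the conditional distribution of $U_n$ given $\xi_n$ is Binomial$(\xi_n, p_n)$, giving $\EXP_\omega U_n = p_n\rho_n$; similarly $\EXP_\omega D_n = q_{n+1}\rho_{n+1}$. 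Taking $\EXP_\omega$ in $U_n = D_n + 1$ and dividing by $p_n$ yields $\rho_n = p_n^{-1} + p_n^{-1}q_{n+1}\rho_{n+1}$, the first equality.

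For the second equality I would compute $\rho_n$ directly via hitting probabilities. Since the walk reaches $n$ from $0$ almost surely, $\rho_n$ equals the expected number of visits starting at $n$, which is $(1-r_n)^{-1}$ with $r_n = q_n\cdot 1 + p_n h$; here $h = \Prob_\omega(\text{hit }n\text{ from }n+1)$, and the factor $1$ reflects that the walk certainly returns to $n$ from $n-1$ under transience to $+\infty$. Writing $f(k) = \Prob_\omega(\text{hit }n\text{ from }k)$ for $k \geq n$, this function satisfies $f(k) = p_k f(k+1) + q_k f(k-1)$ with $f(n) = 1$ and $f(k) \to 0$ as $k \to \infty$ (the latter justified by Lemma \ref{LmBack}). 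Recasting as $f(k+1) - f(k) = \alpha_k(f(k) - f(k-1))$ and telescoping gives $f(n+1) = A/(1+A)$, where $A = \sum_{j\geq 1}\prod_{i=1}^{j}\alpha_{n+i}$ converges $\bP$-a.s.\ by the SLLN because $\bE\ln\alpha = -\bE\ln(p/q) < 0$ by (A). Hence $1 - r_n = p_n(1-h) = p_n/(1+A)$, so $\rho_n = p_n^{-1}(1 + A)$, the second equality.

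The only subtle point is justifying the boundary condition $f(k) \to 0$ as $k \to \infty$; this is exactly the content of Lemma \ref{LmBack}, which provides exponentially small backtracking probabilities. An alternative would be to iterate the first equality directly, which yields $\rho_n = p_n^{-1}\sum_{j=0}^{k-1}\prod_{i=1}^{j}\alpha_{n+i} + p_n^{-1}\prod_{i=1}^{k-1}\alpha_{n+i}\cdot q_{n+k}\rho_{n+k}$, but this forces one to show that the remainder vanishes as $k \to \infty$, requiring an auxiliary growth bound on $\rho_{n+k}$ (obtainable via Birkhoff's theorem but somewhat awkward). The hitting-probability derivation sidesteps this extra step entirely.
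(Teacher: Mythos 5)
Your derivation of the first equality is the paper's argument in other clothing: the paper defines forward and backward crossing counts $\eta_n^\pm$ of the edge $[n,n+1]$, uses $\eta_n^+-\eta_n^-=1$ (transience) and identifies $\EXP_\omega\eta_n^+=p_n\rho_n$, $\EXP_\omega\eta_n^-=q_{n+1}\rho_{n+1}$; your $U_n,D_n$ are precisely these objects, and the strong-Markov step you spell out is implicit in the paper's identities $\sigma_n^+=\rho_n p_n$, $\sigma_n^-=\rho_{n+1}q_{n+1}$.

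Where you genuinely depart is the second equality. The paper disposes of it in one sentence, ``the second one is obtained by iterating the first one,'' which, as you correctly observe, tacitly requires showing that the remainder $p_n^{-1}\bigl(\prod_{i=1}^{k-1}\alpha_{n+i}\bigr)q_{n+k}\rho_{n+k}$ tends to zero a.s.\ as $k\to\infty$. This does hold, since the product decays exponentially a.s.\ by the SLLN while $\rho_{n+k}$ is a stationary sequence and hence grows subexponentially along a.e.\ realization, but the paper leaves it for the reader. Your route through the gambler's-ruin representation $\rho_n=(1-r_n)^{-1}$ with $r_n=q_n+p_n f(n+1)$ is correct and makes the convergence explicit; the one point worth a word is passing from the annealed bound of Lemma~\ref{LmBack} to the quenched boundary condition $f(k)\to 0$, but since $f$ is non-increasing in $k$ for each fixed $\omega$ (to reach $n$ from $k+1$ the walk must pass through $k$), $\bE[f(n+m)]\le C\beta^m\to 0$ together with monotone convergence forces $\lim_k f(k)=0$ a.s. In sum: the first equality is proved the same way; for the second, the paper's one-line iteration is shorter but elides a growth estimate, while your hitting-probability computation is longer but self-contained, and both are valid.
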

\begin{proof}
Let $\eta_n^+$ and $\eta_n^-$  be the number of passages of the edge
$[n, n+1]$ in the forward, respectively, backward direction. Denote
 $\sigma_n^\pm=\EXP_\omega \eta_n^\pm.$
We have
$$\rho_n=\sum_j \Prob_\omega(X_j=n)\text{ and }
\sigma_n^+=\sum_j \Prob_\omega(X_j=n, X_{j+1}=n+1). $$ Thus
$\sigma_n^+=\rho_n p_n.$ Likewise $\sigma_n^-=\rho_{n+1} q_{n+1}.$
Since $\xi_n\to +\infty$ we have that $\eta_n^+-\eta_n^-=1$ for $n\ge0$. Hence
$$ \rho_n p_n-\rho_{n+1} q_{n+1}=1. $$
This implies the first relation in (\ref{OTRR}). The second one is
obtained by iterating the first one.
\end{proof}
For future references, we shall mention here several elementary but
 useful relations for $\rho_n$. We start with a direct corollary of (\ref{OTRR}):
\begin{equation}
\label{SManySteps}
\rho_{n-k} =p_{n-k}^{-1} \alpha_{n-k+1}\dots \alpha_{n-1}q_n\rho_n
+(1+\alpha_{n-k+1}+\dots+\alpha_{n-k+1}\dots \alpha_{n-1})p_{n-k}^{-1}.
\end{equation}
Set $\brc:=\eps_0^{-1}$, where $\eps_0$ is from condition (C). Then \eqref{SManySteps} implies that
\begin{equation}
\label{SManyStepsIneq-ty1}
\rho_{n-k}\le \brc A_{n,k}\rho_n+\brc B_{n,k},
\end{equation}
where $A_{n,k}:=\alpha_{n-1}\dots \alpha_{n-k+1}$, $B_{n,k}:=1+\alpha_{n-k+1}+\dots+\alpha_{n-k+1}\dots \alpha_{n-1}$. Next, \eqref{SManyStepsIneq-ty1} implies
\begin{equation}
\label{SManyStepsIneq-ty}
\rho_{n-k}\le \brc A_{n,k}\rho_n+\brc k\eps_0^{-k}.
\end{equation}
Note that $A_{n,k}$ and $\rho_n$ are independent random variables.

Next, we introduce
\begin{equation}\label{z2}
\begin{aligned}
z_n&:= 1+\alpha_{n+1} +\alpha_{n+1}\alpha_{n+2}+...+\alpha_{n+1}...\alpha_{n+m}+....\\
&=1+\alpha_{n+1} +\alpha_{n+1}\alpha_{n+2}+...+\alpha_{n+1}...\alpha_{n+m}z_{n+m}
\end{aligned}
\end{equation}
It is clear that $z_n=1+\alpha_{n+1}z_{n+1}$, where $\alpha_{n+1}$ and $z_{n+1}$
are independent random variables and the sequence $\{z_n\}_{-\infty<n<\infty}$
considered backward in time forms a Markov chain.
Obviously, $\rho_n=p_n^{-1}z_n$ is a function on the phase space of a
Markov chain $\{p_n, z_{n}\}$ (where $p_n$ and $z_{n}$ are independent).
Since $\bE(\ln\alpha)<0$ the series in
(\ref{OTRR}) and \eqref{z2} converge $\bP$-almost surely and the distributions of
$z_n$ and of $(p_n, z_{n})$ are the
stationary measures of the respective processes. The following heavy tail property of these
stationary measures plays a very important role in the sequel.
\begin{lemma}
\label{LmRen}
(\cite{K1}) There exist $c$ and $c^*>0$ such that
$$ \lim_{x\to+\infty} x^s \bP(z_n> x)=c,\ \
\lim_{x\to+\infty} x^s \bP(\rho_n> x)=c^*,$$
where $s>0$ satisfies $\bE(\alpha^s)=1$ (as in condition (B)).
\end{lemma}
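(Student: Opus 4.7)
\smallskip\noindent\textbf{Proof proposal for Lemma \ref{LmRen}.}

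The plan is to recognize $z_n$ as the stationary solution of a one-dimensional affine random recursion and then invoke the classical implicit renewal theorem of Kesten (in the form refined by Goldie). Specifically, from \eqref{z2} we have the distributional identity
$$ z \stackrel{d}{=} 1 + \alpha\, z', $$
where $z'$ has the same law as $z$, the multiplier $\alpha$ has the law of $q/p$, and $\alpha$ is independent of $z'$. This is exactly the framework of Kesten's random difference equations. The implicit renewal theorem asserts that if (i) $\bE(\alpha^s) = 1$ for some $s>0$, (ii) $\bE(\alpha^s \ln^+\alpha) < \infty$, (iii) the distribution of $\ln\alpha$ is non-arithmetic, and (iv) the additive term (here $\equiv 1$) has finite $s$-th moment, then the stationary solution satisfies $\bP(z>x) \sim c\, x^{-s}$ as $x\to\infty$ for some constant $c>0$.

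Next, I would verify that assumptions (A)--(D) imply (i)--(iv). Condition (B) gives (i) directly. Ellipticity (C) forces $\alpha$ to take values in the compact interval $[\eps_0/(1-\eps_0),\,(1-\eps_0)/\eps_0]$, so all moments of $\alpha$ and $\ln\alpha$ are finite, yielding (ii); the additive term is $1$, so (iv) holds trivially. Assumption (D) is precisely (iii). Strict positivity of the constant $c$ is part of the conclusion of the renewal theorem (it is given by an explicit renewal integral against the $s$-shifted law of $\ln\alpha$, and non-degeneracy follows from $\bP(q>p)>0$, which, as noted in the introduction, is forced by (A) together with finiteness of $s$ in (B)).

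Finally, to pass from $z_n$ to $\rho_n$, I would use $\rho_n = p_n^{-1} z_n$ together with the observation that $z_n$ is a function of $p_{n+1}, p_{n+2},\ldots$ and is therefore independent of $p_n$. Conditioning on $p_n$ and applying the tail asymptotic for $z$ gives
$$ \bP(\rho_n > x) = \bE\bigl[\bP(z > p_n\, x \mid p_n)\bigr] \sim c\,\bE(p_n^s)\, x^{-s}, $$
where the interchange of limit and expectation is justified by dominated convergence since $x^s \bP(z> p_n x) \le c'\, p_n^{-s} \le c'\, \eps_0^{-s}$ uniformly by the one-sided Potter-type bound that accompanies regular variation. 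This yields the second asymptotic with $c^* = c\,\bE(p_n^s)>0$.

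The only genuinely hard step is the implicit renewal theorem itself, which is the content of \cite{K1}; once that is granted, verification of the hypotheses from (A)--(D) and the independence argument for the passage $z_n \to \rho_n$ are routine. Accordingly I would simply quote Kesten's theorem in the proof and devote the exposition to checking hypotheses and to the $p_n^{-1}$ rescaling.
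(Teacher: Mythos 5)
Your approach matches the paper's: the first tail asymptotic is imported directly from Kesten's theorem after noting that $z_n$ satisfies the affine recursion $z_n = 1 + \alpha_{n+1} z_{n+1}$ with $\alpha_{n+1}$ independent of $z_{n+1}$, and the tail of $\rho_n = p_n^{-1} z_n$ is then obtained by conditioning on $p_n$ exactly as you describe. One small slip in the constant: since $\bP(z_n > p_n x) \sim c\, p_n^{-s}\, x^{-s}$, the limit is $c^* = c\,\bE(p^{-s})$, not $c\,\bE(p^{s})$; this is in fact what your own dominating bound $c'\, p_n^{-s}$ in the dominated-convergence step already indicates, and it agrees with the formula given in the paper.
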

Note that here the second relation is a simple corollary of the first one
because $\bP(\rho_n> x)=\bE[\bP(z_n> xp_n|p_n)]\sim \bE(c x^{-s}p^{-s})=cx^{-s}\bE(p^{-s})$.
We also see that $c^*=c\bE(p^{-s})$.
\begin{lemma}
\label{Lm2Clusters}
There exist $\eps_1>0, \eps_2>0, 0<\beta<1$ such that for any $\delta>0$ there are $N_\delta$
and $C=C_\delta>0$ such that for $N>N_\delta$ one has:\newline
(a) If $k\leq\eps_1 \ln N$ then
$$ \bP(\rho_n\geq \delta N^{1/s}, \rho_{n-k}\geq \delta N^{1/s})\leq
\frac{C\beta^k}{N}; $$
(b) If $k\geq\eps_1 \ln N$ then
$$ \bP(\rho_n\geq \delta N^{1/s}, \rho_{n-k}\geq \delta N^{1/s})\leq
C N^{-(\eps_2+1)}. $$
\end{lemma}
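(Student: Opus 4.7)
The plan is to use identity \eqref{SManySteps}, which writes $\rho_{n-k}$ as a sum of two terms: one involving $\rho_n$ linearly through the random product $A_{n,k}=\alpha_{n-k+1}\cdots\alpha_{n-1}$, and another involving $B_{n,k}$ alone. The key observation is that $\rho_n$ is measurable with respect to $\sigma(p_n,p_{n+1},\dots)$, whereas both $A_{n,k}$ and $B_{n,k}$ are measurable with respect to $\sigma(p_{n-k+1},\dots,p_{n-1})$; consequently $A_{n,k}$, $B_{n,k}$ and $p_{n-k}$ are all independent of $\rho_n$. The next step is to split the event $\{\rho_{n-k}\ge\delta N^{1/s}\}$ according to whether $p_{n-k}^{-1}B_{n,k}$ (Case A) or $p_{n-k}^{-1}q_nA_{n,k}\rho_n$ (Case B) exceeds $\delta N^{1/s}/2$.

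In Case A, independence turns the joint probability into a product of marginals. Using Lemma~\ref{LmRen} and the pointwise domination $B_{n,k}\le z_{n-k}$, each marginal is bounded by $C_\delta/N$, so this case contributes at most $C_\delta N^{-2}$, uniformly in $k$. In Case B, fix any $r\in(0,s)$; convexity of $h\mapsto\bE(\alpha^h)$ together with assumption (B) gives $\beta_r:=\bE(\alpha^r)<1$, hence $\bE(A_{n,k}^r)=\beta_r^{k-1}$. Conditioning on $\rho_n$ and applying Markov then yields
$$\bP\bigl(\brc A_{n,k}\rho_n\ge\delta N^{1/s}/2\mid\rho_n\bigr)\le C_\delta\,\rho_n^r N^{-r/s}\beta_r^{k-1}.$$
Integrating against the truncated moment $\bE[\rho_n^r\mathbf{1}_{\rho_n\ge\delta N^{1/s}}]$, which is bounded by $C_\delta(\delta N^{1/s})^{r-s}$ via a direct tail computation from Lemma~\ref{LmRen} (valid because $r<s$), yields a Case B contribution of order $C_\delta\beta_r^{k-1}/N$.

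Combining the two cases gives
$$\bP(\rho_n\ge\delta N^{1/s},\ \rho_{n-k}\ge\delta N^{1/s})\le C_\delta\bigl(N^{-2}+\beta_r^{k-1}/N\bigr).$$
Part (a) follows by choosing $\eps_1\le 1/\ln(1/\beta_r)$: for $k\le\eps_1\ln N$ the second summand dominates and produces the required $C\beta^k/N$ with $\beta=\beta_r$. Part (b) follows because for $k\ge\eps_1\ln N$ one has $\beta_r^{k-1}/N\le CN^{-(1+\eps_2)}$ with $\eps_2=\eps_1\ln(1/\beta_r)>0$, while $N^{-2}\le N^{-(1+\eps_2)}$ as soon as $\eps_2<1$.

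The delicate point is the choice of the Markov exponent $r$ strictly inside $(0,s)$: this is precisely what simultaneously guarantees $\beta_r<1$ (providing the exponential factor $\beta_r^{k-1}$) and the decay $\bE[\rho^r\mathbf{1}_{\rho\ge M}]=O(M^{r-s})$, so that the $r$-dependent powers of $N$ cancel perfectly and leave just $1/N$.
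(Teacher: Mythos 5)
Your proof is correct, and it takes a genuinely different route from the paper. The authors treat parts (a) and (b) by two separate mechanisms. For (a) they discard the $B_{n,k}$ term outright using $k\le\eps_1\ln N$ (so $\brc k\eps_0^{-k}\le\brc N^{1/(2s)}\ll\delta N^{1/s}$), and then split the event according to whether $A_{n,k}\le\beta_1^k$ or not, pairing Lemma~\ref{LmRen} with a Markov bound $\bP(A_{n,k}>\beta_1^k)\le\beta_2^k$. For (b) they keep $B_{n,k}$, introduce an auxiliary truncation $\rho_n\le\delta N^{(1+\eps_3)/s}$ so that $\rho_n$ can be replaced by a deterministic bound inside the $A_{n,k}$ term, and then apply Markov with exponent $h$ to the resulting event involving only $(A_{n,k},B_{n,k})$. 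Your argument instead handles both parts at once: the Case~A/Case~B dichotomy, the domination $p_{n-k}^{-1}B_{n,k}\le\rho_{n-k}$, and the single choice of exponent $r\in(0,s)$ do all the work. The key arithmetic identity $\bE[\rho^r\mathbf{1}_{\rho\ge M}]=O(M^{r-s})$ (available precisely because $r<s$) allows you to pay for the $\rho_n^r$ appearing from Markov's inequality directly, without the paper's truncation $\rho_n\le\delta N^{(1+\eps_3)/s}$ or its split on whether $A_{n,k}$ exceeds a geometric threshold. What this buys is a cleaner, unified bound $C_\delta(N^{-2}+\beta_r^{k-1}/N)$ valid for all $k\ge 1$, from which (a) and (b) both follow by inspection; what it uses beyond the paper is the independence of $(p_{n-k},A_{n,k},B_{n,k})$ from $\rho_n$ plus the fractional tail moment. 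One tiny cosmetic point: you should pick $\eps_1$ strictly less than $1/\ln(1/\beta_r)$ so that $\eps_2=\eps_1\ln(1/\beta_r)<1$, which keeps $N^{-2}$ subordinate to $N^{-(1+\eps_2)}$ in part (b) without any constant-absorption argument.
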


\begin{proof}
(a) It follows from \eqref{SManyStepsIneq-ty} that if $\eps_1$ is chosen so that
$-\eps_1\ln\eps_0\le\frac{1}{3s}$ and $N$ is sufficiently large then
$$
\rho_{n-k}
\le \brc\rho_n A_{n,k}+\brc\eps_1(\ln N)\eps_{0}^{-\eps_1\ln N}
\le \brc\rho_n A_{n,k}+\brc N^{\frac{1}{2s}}.$$
Next, there exist $\beta_1, \beta_2<1$ such that
\begin{equation}\label{beta}
\bP(\alpha_{n-1} \dots \alpha_{n-k}\geq \beta_1^k)\leq \beta_2^k.
\end{equation}
Indeed, if $0<h<\min(1,s)$ and $\beta_1$ is such that $\bE(\alpha^h)<\beta_1^h<1$ then it
follows from the Markov's inequality that
\begin{equation}\label{beta1,2}\bP( \alpha_{n-1} \dots \alpha_{n-k}\geq \beta_1^k)
\leq \frac{(\bE(\alpha^h))^k}{\beta_1^{hk}}\equiv \beta_2^k.
\end{equation}
We can now choose $N_\delta$ so that for $N>N_\delta$ we shall have
$$ \begin{aligned}
\bP(\rho_n\geq \delta N^{1/s}, \rho_{n-k}\geq \delta N^{1/s})&\leq
\bP(\rho_n\geq \delta N^{1/s}, \brc\rho_{n}A_{n,k}+\brc N^{\frac{1}{2s}}\geq \delta N^{1/s})\\
&\leq\bP(\rho_n\geq \delta N^{1/s}, \brc\rho_{n}A_{n,k}\geq \frac{\delta}{2} N^{1/s})\\
\end{aligned}
$$
Finally, the right hand side in the above inequality is estimated as follows:
$$
\begin{aligned}
&\bP(\rho_n\geq \delta N^{1/s},\, \brc\rho_{n}A_{n,k}\geq \frac{\delta}{2} N^{1/s})\\
&=\bP(\rho_n\geq \delta N^{1/s},\, \brc\rho_{n}A_{n,k}\geq \frac{\delta}{2} N^{1/s},\, A_{n,k}\leq\beta_1^k )\\
&+\bP(\rho_n\geq \delta N^{1/s},\, \brc\rho_{n}A_{n,k}\geq \frac{\delta}{2} N^{1/s},\, A_{n,k}>\beta_1^k) \\
&\le\bP\left(\rho_n\geq \frac{\beta_1^{-k} \delta N^{1/s}}{2\brc}\right)+
\bP(\rho_n>\delta N^{1/s} \text{ and }A_{n,k} > \beta_1^k) \leq \Const \frac{\beta_1^{ks}+\beta_2^k}{N},
\end{aligned}$$
where the last step makes use of Lemma \ref{LmRen} (hence the dependence of the $\Const$ on $\delta$) and of independence of $\rho_n$ and $A_{n,k}$.

(b) For any $\eps_3>0$ we can write
\begin{equation}\label{1}
\begin{aligned}
&\bP(\rho_n\geq \delta N^{1/s},\, \rho_{n-k}\geq \delta N^{1/s})\\
&=\bP( \delta N^{1/s}\leq \rho_n\leq \delta N^{\frac{1+\eps_3}{s}},\, \rho_{n-k}\geq \delta N^{1/s})
+\bP(\rho_n> \delta N^{\frac{1+\eps_3}{s}},\, \rho_{n-k}\geq \delta N^{1/s}) \\
&\le\bP( \delta N^{1/s}\leq \rho_n\leq \delta N^{\frac{1+\eps_3}{s}},\, \rho_{n-k}\geq \delta N^{1/s})
+\bP(\rho_n> \delta N^{\frac{1+\eps_3}{s}}) \\
&\leq \frac{\bar\bar{c}}{N^{1+\eps_3}}+\bP( \delta N^{1/s}\leq \rho_n\leq \delta N^{\frac{1+\eps_3}{s}},\, \rho_{n-k}\geq \delta N^{1/s}),
\end{aligned}
\end{equation}
where the last step follows from Lemma \ref{LmRen}. We use \eqref{SManyStepsIneq-ty1} to estimate the last term in \eqref{1}:
\begin{equation}\label{2}
\begin{aligned}
&\bP( \delta N^{1/s}\leq \rho_n\leq \delta N^{\frac{1+\eps_3}{s}},\, \rho_{n-k}\geq \delta N^{1/s})\\
&\le\bP( \delta N^{1/s}\leq \rho_n\leq \delta N^{\frac{1+\eps_3}{s}},\, \brc A_{n,k}\rho_n+\brc B_{n,k}\geq \delta N^{1/s})\\
&\le\bP( \delta N^{1/s}\leq \rho_n\leq \delta N^{\frac{1+\eps_3}{s}},\, \brc A_{n,k}
\delta N^{\frac{1+\eps_3}{s}}+\brc B_{n,k}\geq \delta N^{1/s})\\
&=\bP( \delta N^{1/s}\leq \rho_n\leq \delta N^{\frac{1+\eps_3}{s}})\,\bP(\brc A_{n,k}
\delta N^{\frac{1+\eps_3}{s}}+\brc B_{n,k}\geq \delta N^{1/s}),
\end{aligned}
\end{equation}
where the last step is due to the independence of $\rho_n$ and $( A_{n,k}, B_{n,k})$.
Next, let $1>h>0$ be such that $\brbeta=\bE(\alpha^h)<1$, then
$\bE(B_{n,k}^h)\le (1-\brbeta)^{-1}$. By Markov's inequality
\begin{equation}\label{3}
\begin{aligned}
&\bP(\brc A_{n,k}
\delta N^{\frac{1+\eps_3}{s}}+\brc B_{n,k}\geq \delta N^{1/s})\le
\brc^h\frac{\bE(\delta^hN^{\frac{1+\eps_3}{s}h}A_{n,k}^h+B_{n,k}^h)}{\delta^h N^{h/s}}\\
&\le\bar{\bar{c}} N^{\frac{\eps_3h}{s}}\brbeta^k+\bar{\bar{c}} N^{\frac{-h}{s}}.
\end{aligned}
\end{equation}
Since $k\ge\eps_1\ln N$, we have that $N^{\frac{\eps_3h}{s}}\brbeta^k\le N^{\frac{\eps_3h}{s}+\eps_1\ln\brbeta}=N^{-\bar{\varepsilon}}$ (with $\eps_3$ sufficiently small so that
to make $\bar{\varepsilon}$ strictly positive). Finally, it follows from Lemma \ref{LmRen}, \eqref{2} and \eqref{3}
that
\begin{equation}\label{4}
\bP( \delta N^{1/s}\leq \rho_n\leq \delta N^{\frac{1+\eps_3}{s}},\, \rho_{n-k}\geq \delta N^{1/s})\\
\le\Const N^{-1 - \min(\bar{\varepsilon},\, h/s)}.
\end{equation}
The proof of (b) now follows from \eqref{4} and \eqref{1}.
\end{proof}
Next,  we need the fact that $\rho_n$ is exponentially mixing.
To prove this we use \eqref{SManySteps}.
Assumptions (A) and (C) imply that
there exist $\beta_1, \beta_2<1$ such that
$$ \bP\left(\max_{k>L} \alpha_n \alpha_{n-1}\dots \alpha_{n-k-1}\geq \beta_1^{L}\right)
\leq \beta_2^{L} . $$
Therefore for typical realization of $\alpha$ the dependence of $\rho_{n-k}$ on $\rho_n$ decays exponentially.
We formulate this statement as follows. Given a $\hrho_n$ define for $k>0$
\begin{equation}
\label{SManySteps1} \hrho_{n-k} =p_{n-k}^{-1}\hrho_nq_n \alpha_{n-1}\dots \alpha_{n-k+1}
+(\alpha_{n-1}\dots \alpha_{n-k+1} +\dots+1)p_{n-k}^{-1}.
\end{equation}
We are mainly interested in the case when the difference between $\hrho_n$ and $\rho_n$
is large. More specifically we assume that $\hrho_n^h\gg \bE(\rho_n^h)$,
where $0<h<\min(1,s)$ is as in \eqref{beta1,2}. Then the following holds.
\begin{lemma}
\label{LmCouple}
Let $\hrho_{n-k}$ be defined by \eqref{SManySteps1} and $\rho_{n}$ be
the stationary sequence satisfying \eqref{SManySteps}.
Then there exist $K>0$ and $\beta_1,\,\beta_3<1$ such that for $k>K\ln\hrho_n$
$$ \bP\left(\left|\rho_{n-k}-\hrho_{n-k}\right| \geq \beta_1^k\right)
\leq \beta_3^k. $$
\end{lemma}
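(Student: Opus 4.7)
The plan is to subtract the two recurrences. The inhomogeneous summands $(1+\alpha_{n-k+1}+\dots+\alpha_{n-k+1}\cdots\alpha_{n-1})p_{n-k}^{-1}$ in \eqref{SManySteps} and \eqref{SManySteps1} are identical, so taking the difference yields the clean linear identity
$$\rho_{n-k} - \hrho_{n-k} = p_{n-k}^{-1}\,q_n\,A_{n,k}\,(\rho_n - \hrho_n),$$
where $A_{n,k}=\alpha_{n-1}\cdots\alpha_{n-k+1}$, as before. Ellipticity (C) gives $p_{n-k}^{-1}q_n\le \eps_0^{-1}$, so the task reduces to controlling $A_{n,k}|\rho_n-\hrho_n|$. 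The crucial structural remark is that $A_{n,k}$ is a function of $\alpha_{n-k+1},\dots,\alpha_{n-1}$, while $\rho_n$ is a function of $p_n$ and of $\alpha_{n+1},\alpha_{n+2},\dots$, and $\hrho_n$ is a fixed seed; hence $A_{n,k}$ and $\rho_n-\hrho_n$ are independent.

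Next, I would split the target event according to whether $A_{n,k}$ is exponentially small. Choose $\beta_0\in(\exp(\bE\ln\alpha),\,\beta_1)$ with $\beta_1<1$ (note $\exp(\bE\ln\alpha)<1$ by assumption (A)); picking $0<h<\min(1,s)$ sufficiently small makes $\bE(\alpha^h)<\beta_0^h$, and then \eqref{beta1,2} delivers some $\beta_5<1$ with $\bP(A_{n,k}\ge \beta_0^k)\le \beta_5^k$. On the complementary event the identity above gives
$$|\rho_{n-k}-\hrho_{n-k}|\le \eps_0^{-1}\beta_0^k\,|\rho_n-\hrho_n|,$$
which exceeds $\beta_1^k$ only if $|\rho_n-\hrho_n|\ge \eps_0\lambda^k$ with $\lambda:=\beta_1/\beta_0>1$. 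Since $\bE(\rho_n^h)<\infty$ by Lemma \ref{LmRen} (because $h<s$) and $(a+b)^h\le a^h+b^h$ for $0<h\le 1$, Markov's inequality yields
$$\bP\!\left(|\rho_n-\hrho_n|\ge \eps_0\lambda^k\right)\le \frac{\bE(\rho_n^h)+\hrho_n^h}{\eps_0^h\lambda^{kh}}\le \frac{2\hrho_n^h}{\eps_0^h\lambda^{kh}},$$
where the last step uses the standing hypothesis $\hrho_n^h\gg \bE(\rho_n^h)$ stated just before the lemma.

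Finally, fix any $\beta_3\in(\lambda^{-h},1)$, which is nonempty since $\lambda>1$ and $h>0$; then $\beta_3\lambda^h>1$, and the Markov bound is at most $\beta_3^k$ as soon as $2\hrho_n^h/\eps_0^h\le (\beta_3\lambda^h)^k$, which in turn holds whenever $k\ge K\ln\hrho_n$ for a suitable constant $K$ depending only on $h,\lambda,\beta_3,\eps_0$. Combining the two exceptional probabilities via a union bound produces $\bP(|\rho_{n-k}-\hrho_{n-k}|\ge\beta_1^k)\le \beta_5^k+\beta_3^k$, and relabeling $\beta_3\leftarrow \max(\beta_3,\beta_5)$ gives the desired estimate. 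The only non-mechanical step is the cancellation yielding the clean linear identity; the remaining work is merely the bookkeeping needed to choose the rates $\beta_0,\beta_1,\beta_3$ and the exponent $h$ simultaneously in $(0,1)$, which is possible precisely because assumption (A) provides $\exp(\bE\ln\alpha)<1$ and therefore room to take $h$ small.
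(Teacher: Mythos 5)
Your proof is correct and follows essentially the same route as the paper's: both start from the linear identity (or inequality) $\rho_{n-k}-\hrho_{n-k}=p_{n-k}^{-1}q_n A_{n,k}(\rho_n-\hrho_n)$ obtained by cancelling the inhomogeneous terms, exploit the independence of $A_{n,k}$ and $\rho_n-\hrho_n$, and control the product via Markov's inequality with a small exponent $h<\min(1,s)$ so that $\bE(\alpha^h)<1$. The only organizational difference is that you split according to whether $A_{n,k}\ge\beta_0^k$ and apply Markov to $|\rho_n-\hrho_n|$ on the complement, whereas the paper applies Markov's inequality once to the whole product $A_{n,k}|\rho_n-\hrho_n|$ and then absorbs the factor $\bE(\rho_n^h)+\hrho_n^h$ using the growth margin $\beta_3/\beta_2>1$; the two are equivalent and yield the same threshold $k\gtrsim K\ln\hrho_n$.
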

\begin{proof} It follows from \eqref{SManySteps} and \eqref{SManySteps1} that
$$\left|\rho_{n-k}-\hrho_{n-k}\right|\le \brc A_{n,k}\,\left|\rho_{n}-\hrho_{n}\right|.$$
Consider the same $0<h<1$, $\beta_1$, and $\beta_2$ as in \eqref{beta}, \eqref{beta1,2}
and set $\beta_3=(1+\beta_2)/2$. Then
$$ \bP\left(\left|\rho_{n-k}-\hrho_{n-k}\right| \geq \beta_1^k\right)
\leq \bP\left(\brc A_{n,k}\,\left|\rho_{n}-\hrho_{n}\right| \geq \beta_1^k\right)
\leq \beta_2^k[\bE(\rho_n^{h})+\hrho_{n}^h]\le \beta_3^k. $$
Here the first inequality is obvious. The second one is due to the Markov
inequality, to \eqref{beta},
and to the independence of $\rho_n$ and $A_{n,k}$. Finally, one easily checks that the third one holds for $k>K\ln\rho_n$, where $K:=2h/\ln(0.5+0.\beta_2^{-1} )+1$ (this where the condition  $\hrho_n\gg \bE(\rho_n^h)$ is used).
\end{proof}

\subsection{Occupation times. Correlations.}
The proofs of Lemmas \ref{LmCorShort} and \ref{LmCorLong}
will make use of several elementary equalities and inequalities
concerned with a Markov chain $Y=\{Y_t,\ t\ge0\}$ with a phase space
of 3 sites and transition matrix
\begin{equation}
\label{MatrMC}
\left(\begin{array}{ccc}
\brp  & \brq  & 0 \cr
\brrq & \brrp & \eps \cr
0      &  0      & 1    \cr
\end{array}\right).
\end{equation}
Namely, let $\breta$ and $\brreta$ be the total numbers of visit to the first and the second
site respectively. Set $U_1=E(\breta | Y_0=1)$,
$U_2=E(\breta | Y_0=2)$, $V_1=E(\brreta | Y_0=1)$, $V_2=E(\brreta | Y_0=2)$.
It follows easily from the standard first step analysis that
\begin{equation}
\label{ExpMC}
U_1=\frac{\eps+\brrq}{\eps\brq},\ \ U_2=\frac{\brrq}{\eps\brq},\ \
V_1=V_2=\frac{1}{\eps}.
\end{equation}
Next, set $W_i=E(\breta\brreta|Y_0=i)$, where $i=1,\, 2$. Once again, by the
first step analysis, one easily obtains that
\begin{equation}
\label{ExpProdMC}
W_1=\brp W_1+\brq W_2+ V_1,\ \ W_2=\brrq W_1+\brrp W_2+ U_2.
\end{equation}
Solving \eqref{ExpProdMC} gives
\begin{equation}
\label{ExpProdMC1}
W_1= V_1(U_1+U_2),\ \ W_2=U_2(V_1+V_2)
\end{equation}
and hence
\begin{equation}
\label{CovMC}
\Cov(\breta, \brreta|Y_0=1)=\Cov(\breta, \brreta|Y_0=2)=V_1U_2.
\end{equation}
It is a standard fact that $\breta$ conditioned on $Y_0=1$ has geometric distribution
whose parameter is thus $U_1^{-1}$. If our Markov chain starts
from 1 it must visit 2 before being absorbed by 3. Hence the distribution of
$\brreta$ conditioned on $Y_0=1$ is the same as the distribution of
$\brreta$ conditioned on $Y_0=2$ and is geometric with parameter $V_2^{-1}=\eps$.
We therefor have that $\Var(\breta|Y_0=1)=U_1^{2}-U_1$ and $\Var(\brreta|Y_0=1)=V_2^{2}-V_2$.
We can now compute the correlation coefficient of $\breta$ and $\brreta$ which,
taking into account \eqref{ExpMC}, can be presented as follows:
\begin{equation}
\label{CorMC}
\Corr(\breta, \brreta|Y_0=1)=\frac{V_1U_2}{\sqrt{(U_1^{2}-U_1)(V_2^{2}-V_2)}}=
\frac{\brrq}{\brrq+\eps}(1-U_1^{-1})^{-\frac{1}{2}}(1-V_2^{-1})^{-\frac{1}{2}}.
\end{equation}
This formula implies lower and upper bounds for correlations in two
different regimes: (a) when $\brrq/\eps\to0$ and (b) when $\eps\to 0$ while $\brq,\ \brrq$ remain
separated from $0$. Here is the precise statement we need.
\begin{lemma}
\label{LmCorEst} (a) Suppose that $U_1\ge1+c$, $V_2\ge1+c$, where $c>0$.
Then
\begin{equation}\label{Cor1MC}
\Corr(\breta, \brreta|Y_0=1)\le \Const \,\frac{\brrq}{\eps}\equiv\Const \,{\brrq}V_1.
\end{equation}
(b) If  ${\brrq}\ge c$ and ${\brq}\ge c$ for some $c>0$ then
for $\eps$ small enough, or,
equivalently, $U_1$ large enough
\begin{equation}
\label{CorEpsUMC}
\Corr(\breta, \brreta|Y_0=1)\ge 1-\frac{\eps}{c},\ \ \
\Corr(\breta, \brreta|Y_0=1)\ge 1-\frac{1}{c U_1}.
\end{equation}
\end{lemma}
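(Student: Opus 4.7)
The plan is to read off both parts of the estimate directly from the explicit formula \eqref{CorMC},
$$\Corr(\breta, \brreta\,|\,Y_0=1)=\frac{\brrq}{\brrq+\eps}\,(1-U_1^{-1})^{-1/2}(1-V_2^{-1})^{-1/2},$$
using only the hypothesized lower bounds on the parameters. No fresh probabilistic input should be needed; the work is essentially algebraic manipulation of this identity together with the expressions \eqref{ExpMC} for $U_1, V_2$.

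For part (a), I would first note that $U_1\ge 1+c$ forces $U_1^{-1}\le 1/(1+c)<1$, so $(1-U_1^{-1})^{-1/2}\le \sqrt{(1+c)/c}$, and analogously $(1-V_2^{-1})^{-1/2}\le \sqrt{(1+c)/c}$. Hence both of these factors are bounded above by a constant depending only on $c$. For the remaining factor, the trivial estimate $\frac{\brrq}{\brrq+\eps}\le \frac{\brrq}{\eps}$ finishes the job, and recalling $V_1=1/\eps$ from \eqref{ExpMC} rewrites the bound in the equivalent form $\Const\,\brrq V_1$.

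For part (b), the two square-root factors now play the opposite role: each is at least $1$, so
$$\Corr(\breta, \brreta\,|\,Y_0=1)\ \ge\ \frac{\brrq}{\brrq+\eps}\ =\ 1-\frac{\eps}{\brrq+\eps}.$$
The first inequality $\Corr\ge 1-\eps/c$ then follows from $\brrq+\eps\ge \brrq\ge c$. For the second, I would substitute $U_1=(\eps+\brrq)/(\eps\brq)$ from \eqref{ExpMC}, so that $1/(cU_1)=\eps\brq/(c(\eps+\brrq))$, and then use $\brq\ge c$ to conclude $\eps/(\brrq+\eps)\le 1/(cU_1)$. The statement about ``$\eps$ small enough being equivalent to $U_1$ large enough'' is immediate from the same formula, since when $\brq, \brrq \ge c$ one has $U_1\to\infty$ iff $\eps\to 0$.

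The hard part is really nothing more than keeping track of which of $\eps$, $\brq$, $\brrq$ is being bounded from below at each step; there is no conceptual obstacle, because the explicit form \eqref{CorMC} has already been derived via the first-step analysis in \eqref{ExpMC}--\eqref{CovMC}. The only care needed is to make sure that the hypothesis $\brq\ge c$ is invoked precisely at the point where the second bound in (b) is established, since it is not needed for the first.
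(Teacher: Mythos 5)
Your proof is correct, and in part (b) it is actually cleaner than the paper's argument. The paper proceeds by Taylor-expanding the exact formula \eqref{CorMC} in the small parameter $\eps/\brrq$, obtaining \eqref{CorEpsMC} and \eqref{CorU1MC} and then reading off \eqref{CorEpsUMC} up to $\cO$-terms, which is why the statement carries the qualifier ``for $\eps$ small enough.'' You instead simply discard the two square-root factors, which are $\ge 1$ whenever $U_1, V_2 > 1$ (automatic here since $\brrq\ge c>0$ forces $\eps<1$, hence $V_2=1/\eps>1$, and $\eps+\brrq>\eps\brq$ forces $U_1>1$), and then manipulate the remaining factor $\brrq/(\brrq+\eps)=1-\eps/(\brrq+\eps)$ directly. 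This yields both inequalities in \eqref{CorEpsUMC} by elementary algebra, using $\brrq\ge c$ for the first and $\brq\ge c$ for the second, and it does so for \emph{all} admissible $\eps$, not just small $\eps$—so your route both avoids the asymptotic bookkeeping and slightly strengthens the conclusion. Part (a) you handle exactly as the paper intends by its ``immediate corollary'' remark, bounding the square-root factors by $\sqrt{(1+c)/c}$ under $U_1,V_2\ge1+c$ and using $\brrq/(\brrq+\eps)\le\brrq/\eps=\brrq V_1$.
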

\begin{proof} (a) Inequality \eqref{Cor1MC} is an immediate corollary of \eqref{CorMC}.

(b) \eqref{CorMC} can be written as
$$
\Corr(\breta, \brreta|Y_0=1)=
\frac{\brrq}{\brrq+\eps}(1-\frac{\eps\brq}{\brrq+\eps})^{-\frac{1}{2}}(1-\eps)^{-\frac{1}{2}}.
$$
If $\frac{\eps}{\brrq}<1$ then it follows from here that
\begin{equation}
\label{CorEpsMC}
\Corr(\breta, \brreta|Y_0=1)=1- (1-\frac{\brq+\brrq}{2})\frac{\eps}{\brrq}
+\mathcal{O}(\left(\frac{\eps}{\brrq}\right)^2).
\end{equation}
Due to \eqref{ExpMC} and conditions of the Lemma we have  $\eps=\frac{\brrq}{\brq}\left(U_1^{-1} +\mathcal{O}(U_1^{-2})\right)$ and hence
\begin{equation}
\label{CorU1MC}
\Corr(\breta, \brreta|Y_0=1)=1- (1-\frac{\brq+\brrq}{2})\frac{1}{\brq U_1} +\mathcal{O}(U_1^{-2}).
\end{equation}
\eqref{CorEpsUMC} is now a simple corollary of \eqref{CorEpsMC} and \eqref{CorU1MC}.
\end{proof}

\begin{lemma}
\label{LmCorShort}There is a $C>0$ such that for $\mathbf{P}$-almost all $\omega$ and $n\ge0$
\begin{equation}
\label{CorrhoMC}
\Corr_\omega(\xi_n, \xi_{n+1})\geq 1-\frac{C}{{\rho_n}}.
\end{equation}
\end{lemma}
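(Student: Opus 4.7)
My plan is to reduce the computation to the three-state Markov chain analyzed in Lemma~\ref{LmCorEst}, and then apply part~(b) of that lemma.

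\textbf{Step 1: Induced chain on $\{n, n+1\}$.} Since the walk starts at $0$ and is transient to $+\infty$, it visits $n$ before $n+1$ with probability one, and eventually leaves $\{n, n+1\}$ for good only by escaping through $n+2$ (never returning). I would observe the walk only at the consecutive moments it visits $\{n, n+1\}$ (with a third, absorbing state representing the event ``the walk has left $\{n, n+1\}$ forever''). The resulting chain on three states starts at the first state ($n$) a.s., and by transience of excursions to the left, every step from $n$ returns to $\{n, n+1\}$. Thus the transition matrix has exactly the form \eqref{MatrMC}, and by construction $\xi_n = \breta$ and $\xi_{n+1} = \brreta$. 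Hence
$$\Corr_\omega(\xi_n, \xi_{n+1}) = \Corr(\breta, \brreta \mid Y_0 = 1).$$

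\textbf{Step 2: Identification of parameters.} A one-step analysis gives $\brq = p_n$ (direct step $n \to n+1$) and $\brrq = q_{n+1}$ (direct step $n+1 \to n$); the parameters $\brrp, \eps$ encode excursions from $n+1$ through $n+2$ that return or escape respectively. By the Markov property, once the walk hits $n$ (resp.\ $n+1$) its subsequent visit count to that site is distributed as if started there, so by Lemma~\ref{Lm3.1}
$$U_1 = \rho_n, \qquad V_2 = \rho_{n+1}.$$
Ellipticity (condition (C)) gives $\brq, \brrq \geq \eps_0 > 0$, so the hypothesis of Lemma~\ref{LmCorEst}(b) with $c = \eps_0$ will be met as soon as $\rho_n$ is sufficiently large.

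\textbf{Step 3: Application of Lemma~\ref{LmCorEst}(b) and handling small $\rho_n$.} For $\rho_n$ large enough that Lemma~\ref{LmCorEst}(b) applies, the second bound in \eqref{CorEpsUMC} reads
$$\Corr_\omega(\xi_n, \xi_{n+1}) \geq 1 - \frac{1}{\eps_0\, U_1} = 1 - \frac{1}{\eps_0\, \rho_n}.$$
For the finitely many values of $\rho_n$ below this threshold, recall $\rho_n \geq 1/p_n \geq 1$ by \eqref{OTRR}, so choosing $C$ sufficiently large (at least twice the threshold) makes $1 - C/\rho_n \leq -1$ in that regime, in which case the asserted inequality \eqref{CorrhoMC} is trivial. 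Taking the maximum of this $C$ and $1/\eps_0$ yields the statement of the lemma.

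\textbf{Expected main point.} The content of the argument is entirely concentrated in Step~1, the collapse of the random walk to the three-site chain; once this reduction is made, \eqref{CorrhoMC} is essentially a restatement of \eqref{CorEpsUMC}. There is no serious obstacle, only the need to verify carefully that in the induced chain the initial distribution is $Y_0 = 1$ (guaranteed by the walk starting to the left of $n$) and that ``absorption at the third state'' corresponds exactly to a.s.\ escape to $+\infty$ past $n+1$ (guaranteed by transience together with ellipticity).
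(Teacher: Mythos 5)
Your proof is correct and follows essentially the same route as the paper's: collapse the walk to the induced three-state chain on $\{n, n+1, as\}$, identify $\brq = p_n$, $\brrq = q_{n+1}$, $\breta=\xi_n$, $\brreta=\xi_{n+1}$, and invoke Lemma \ref{LmCorEst}(b). Two small remarks: (i) your identification $U_1=\rho_n$ is the one actually used (the paper's line ``hence $V_1=\rho_n$'' appears to be a slip for $U_1=\rho_n$); (ii) your explicit treatment of the small-$\rho_n$ regime via $\rho_n\ge 1$ and a generous choice of $C$ fills in a point the paper leaves implicit.
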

\begin{proof} Let $\omega$ be such that the random walk $X$ runs away to $+\infty$
with $\mathbb{P}_\omega$ probability 1 (which is the case for
$\mathbf{P}$-almost all $\omega$). For a given $n\ge0$ consider a Markov
chain $Y=\{Y_t,\ t\ge0\}$,
with the state space $\{n,\ n+1,\ as\}$, where $n,\ n+1$ are sites on
$\mathbb{Z}$ and $as$ is an absorbing state. Let $k_0< k_1<...<k_\tau$ be the sequence of all
moments such that $X_{k_j}\in\{n,n+1\}$; we set $Y_t=X_{k_t}$ if $t\le\tau$
and $Y_t=as$ if $t>\tau$.
It easy to see that the transition matrix of $Y$ is as in \eqref{MatrMC} with
transition probabilities given by
$$ \brp=q_n,\ \ \brq= p_{n},\ \ \brrq=q_{n+1},$$
$$\brrp=\mathbb{P}_\omega
\{\hbox{$X_k$ starting from $n+1$ returns to $n+1$ before visiting $n$}\},$$
$$  \eps=\mathbb{P}_\omega
\{\hbox{$X_t$ starting from $n+1$ never returns to $n+1$}\}.$$
Also, in this context, $\breta=\xi_n$, $\brreta=\xi_{n+1}$ and hence $V_1=\rho_n$.
Next, $\brq,\ \brrq$ are separated from $0$
because of condition (C) from Section \ref{Intro}.
All conditions of Lemma \ref{LmCorEst} are thus satisfied and hence, for
$\rho_n$s which are sufficiently large, \eqref{CorrhoMC} follows from \eqref{CorEpsUMC}.
\end{proof}
\begin{lemma}
\label{LmCorLong}
(a) There exist sets $\Omega_N,$ $K>0$
such that
$ \bP(\Omega_N^c)\leq N^{-100} $
and if $\omega\in \Omega_N$ then for all $0\le n_1,\,n_2\le N$
such that $n_2>n_1+K\ln N$ we have
$$ \Corr_\omega(\xi_{n_1}, \xi_{n_2})\leq N^{-100} . $$
(b) If $K$ is sufficiently large then
for each $N$ there exist random variables $\{\brxi_n\}_{n=0}^N$ such that
for each $\omega\in \Omega_N$
for any sequence $0\le n_1<n_2\dots <n_k\le N$ such that $n_{j+1}>n_j+K\ln N,$
the variables
$\{\brxi_{n_{j}}\}_{j=0}^k$ are mutually independent and
\begin{equation}
\label{AlmostSame}
\bP(\brxi_{n}=\xi_{n} \text{ for } n=0,\dots, N)\geq 1-\frac{C}{N^{100}}.
\end{equation}
\end{lemma}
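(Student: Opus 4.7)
The proof rests on the strong Markov property combined with the backtracking estimate of Lemma~\ref{LmBack}. Set $m=\lceil K\ln N/2\rceil$ and, for each $n\in\{0,\ldots,N\}$, introduce stopping times $\sigma_n^-=\inf\{t\ge 0:X_t=n-m\}$ (taken to be $0$ when $n<m$) and $\sigma_n^+=\inf\{t\ge 0:X_t=n+m\}$. Transience of $X$ to $+\infty$ makes $\sigma_n^+$ finite a.s. The key point is that $X$ visits $n$ outside the interval $[\sigma_n^-,\sigma_n^+)$ only if it backtracks from some site to the right of $n+m$, and by Lemma~\ref{LmBack} this occurs with $\bP$-probability at most $C\beta^m=CN^{-K|\ln\beta|/2}$, which is negligible once $K$ is large.

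For part (a), let $b_1=\sigma_{n_1}^+$ and split $\xi_{n_1}=\xi_{n_1}^{\le b_1}+\xi_{n_1}^{>b_1}$. Since $X_t<n_1+m$ for $t<b_1$ and $n_2>n_1+m$, the walker visits $n_2$ only at times $\ge b_1$. Applying strong Markov at $b_1$ (where $X_{b_1}=n_1+m$ is deterministic) reduces the covariance to
$$\Cov_\omega(\xi_{n_1},\xi_{n_2})=\EXP_\omega\bigl(\xi_{n_1}^{>b_1}\xi_{n_2}\bigr)-\EXP_\omega\bigl(\xi_{n_1}^{>b_1}\bigr)\rho_{n_2}.$$
Let $A=\{X\text{ returns to }n_1\text{ after }b_1\}$, so that $\xi_{n_1}^{>b_1}=1_A\cdot\xi_{n_1}^{>b_1}$, and by Lemma~\ref{LmBack} $\Prob_\omega(A)\le C\beta^m$. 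A second application of strong Markov at the first post-$b_1$ return to $n_1$ (conditional on $A$) shows that $\xi_{n_1}^{>b_1}$ given $A$ is geometric with mean $\rho_{n_1}$, whence $\EXP_\omega((\xi_{n_1}^{>b_1})^2\mid A)\le 2\rho_{n_1}^2$. Cauchy-Schwarz combined with the standard moment bound $\EXP_\omega(\xi_n^2)\le 2\rho_n^2$ then yields $|\Cov_\omega(\xi_{n_1},\xi_{n_2})|\le C\rho_{n_1}\rho_{n_2}\beta^{m/2}$. For the denominator, note that $1-1/\rho_n=1-p_nz_n^{-1}\ge 1-p_n\ge\eps_0$, so $\Var_\omega(\xi_n)=\rho_n^2(1-1/\rho_n)\ge\eps_0\rho_n^2$, and hence $|\Corr_\omega(\xi_{n_1},\xi_{n_2})|\le C\beta^{m/2}/\eps_0$, which is $\le N^{-100}$ uniformly in $\omega$ for $K$ large enough. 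Consequently $\Omega_N$ may be taken to be $\Omega$ itself, or any set of $\bP$-measure at least $1-N^{-100}$ chosen for consistency with part (b).

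For part (b), define $\brxi_n:=\#\{t\in[\sigma_n^-,\sigma_n^+):X_t=n\}$. If $n_1<\dots<n_k$ satisfy $n_{j+1}-n_j>K\ln N\ge 2m$, then $n_j+m<n_{j+1}-m$ forces $\sigma_{n_j}^+\le\sigma_{n_{j+1}}^-$, so the intervals $[\sigma_{n_j}^-,\sigma_{n_j}^+)$ are time-ordered and separated by stopping times at which $X$ takes the deterministic value $n_j\pm m$. Iterated strong Markov then shows that $\{\brxi_{n_j}\}_{j=1}^k$ are mutually independent under $\Prob_\omega$ for every $\omega$. The bad event $\{\xi_n\ne\brxi_n\}$ requires a post-$\sigma_n^+$ visit to $n$, which by Lemma~\ref{LmBack} has $\bP$-probability at most $C\beta^m$; a union bound over $n\in\{0,\ldots,N\}$ proves \eqref{AlmostSame} once $K|\ln\beta|/2>101$. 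The main technical obstacle is the covariance estimate in part (a), particularly the justification that $\xi_{n_1}^{>b_1}$ conditional on $A$ is geometric via strong Markov at the first return; part (b) is then a clean consequence of iterated strong Markov and a union bound.
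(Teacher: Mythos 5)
Your argument for part (b) is sound and essentially coincides with the paper's (the paper takes $\bar\xi_n$ to be the number of visits to $n$ before the first visit to $n+\frac{1}{2}K\ln N$, which is the same as your count over $[\sigma_n^-,\sigma_n^+)$ since all visits to $n$ occur after $\sigma_n^-$). For part (a) your decomposition at the stopping time $b_1=\sigma_{n_1}^+$ followed by Cauchy--Schwarz is a genuinely different route from the paper's, which runs everything through the three-state Markov chain correlation formula \eqref{Cor1MC}. However, your proof of (a) contains a real gap.

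The error is in the step ``by Lemma~\ref{LmBack} $\Prob_\omega(A)\le C\beta^m$.'' Lemma~\ref{LmBack} is an \emph{annealed} estimate: it bounds $\bP(X\text{ visits }n\text{ after }n+m)$, i.e.\ $\bE\bigl[\Prob_\omega(A)\bigr]$, not the quenched probability $\Prob_\omega(A)$ for a fixed $\omega$. Under condition (C) the environment can contain long stretches where the drift points leftward, and for such $\omega$ the quenched probability of backtracking from $n_1+m$ to $n_1$ can be close to $1$, making the quenched correlation close to $1$ as well. Consequently the bound $|\Corr_\omega(\xi_{n_1},\xi_{n_2})|\le C\sqrt{\Prob_\omega(A)}/\eps_0$ is \emph{not} $\le N^{-100}$ uniformly in $\omega$, and your conclusion that ``$\Omega_N$ may be taken to be $\Omega$ itself'' is false. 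Indeed, the very appearance of a set $\Omega_N$ with $\bP(\Omega_N^c)\le N^{-100}$ in the statement signals that the bound fails for a small but nonempty set of environments; a claim that the estimate is uniform in $\omega$ should have been a warning sign.

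The fix is cheap and would make your argument complete: invoke Lemma~\ref{LmBack} to get $\bE[\Prob_\omega(A)]\le C\beta^m$, apply Markov's inequality to conclude $\bP\bigl(\Prob_\omega(A)>N^{-202}\bigr)\le C\beta^m N^{202}$, choose $K$ large enough that this is $\le N^{-103}$, take a union bound over the $O(N^2)$ admissible pairs $(n_1,n_2)$, and let $\Omega_N$ be the set of $\omega$ for which $\Prob_\omega(A)\le N^{-202}$ for all such pairs. This is in substance what the paper does (there, one additionally controls $\rho_{n_2}$ via Lemma~\ref{LmRen} because the bound \eqref{Cor1MC} involves $\bar{\bar q}\,\rho_{n_2}$; your Cauchy--Schwarz route has the minor advantage of not needing the extra cut on $\rho_{n_2}$, since it yields $\Corr_\omega\le 2\sqrt{\Prob_\omega(A)}/\eps_0$ directly).
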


\begin{proof}
(a) Consider a Markov chain $Y$ which is defined as in the proof of
Lemma \ref{LmCorShort} with the difference that its state space is
$\{n_1,n_2,as\}$ and that $\breta=\xi_{n_1}$, $\brreta=\xi_{n_2}$. Then by \eqref{Cor1MC}
$$
\Corr_\omega(\xi_{n_1}, \xi_{n_2})\le \Const \,{\brrq}\rho_{n_2}.
$$
But, by Lemma \ref{LmRen},  $\rho_{n}\le N^{\frac{103}{s}}$ except for the
set of measure $\cO(N^{-103}).$
Now Lemma \ref{LmBack} guarantees that we can choose $K$ so that if
the sites are separated by $K\ln N$ then $\brrq<N^{-(101+103/s)}$ except for the set of measure
$\cO(N^{-103}).$ This proves (a) for fixed $n_1,n_2$ on a set of measure $\ge 1-\cO(N^{-103})$
which in turn implies the wanted result.

(b) Let $\brxi_n$ be the number of visits to the site $n$
before the first visit to $n+\frac{K\ln N}{2}.$ It follows from this definition that
$\{\brxi_{n_{j}}\}_{j=0}^k$ are mutually independent. Next,
$$\bP(\brxi_{n}=\xi_{n})\le \bP(X \text{ visits } n \text{ after } n+0.5 K\ln N)$$
Now \eqref{AlmostSame} follows from Lemma \ref{LmBack}.
\end{proof}

\section{Proof of Theorem \ref{ThMain}.}
\label{ScProof}
Our goal is to show that the main contribution to
$T_N$ comes from the terms where $\rho_n$ is large. However, the set where $\rho_n$ is
large has an additional structure. Namely, if $\rho_n$ is large the same is true for
$\rho_{n\pm 1}$ and more generally for $\rho_{n_1}$  and $\rho_{n_2}$ when
$n_1$ and $n_2$ are in a sense close to $n$; this implies that the corresponding $\xi_{n_1}$ and $\xi_{n_2}$ are strongly correlated. But if $n_1$ and $n_2$ are far apart then $\rho_{n_1}$  and $\rho_{n_2}$, and also $\xi_{n_1}$ and $\xi_{n_2}$, are almost independent.
In the arguments below we need to take care about this additional structure.

But first we show that terms where $\rho_n<\delta N^{1/s}$ can be neglected.

\begin{lemma}
\label{LmLow}
Let $\delta> 0$. Then there is $N_\delta$ (which depends also on $s$) such that
for $N>N_\delta$ the following holds:

(a) If $0<s<1$ then
$$ \bE\left(\sum_{\rho_n<\delta N^{1/s}} \xi_n\right)\le \Const N^{1/s}\delta^{1-s}. $$

(b) If $1<s<2$ then
there is a set $\tOmega_{N,\delta}$ such that $\bP(\tilde{\Omega}_{N,\delta}^c)\leq N^{-100}$ and
$$ \bE\left(1_{\tOmega_{N, \delta}}
\mathbb{E}_\omega\left(\sum_{\rho_n<\delta N^{1/s}} (\xi_n-\rho_n)\right)^2\right)
\le \Const N^{2/s}\delta^{2-s}. $$

(c) If $0<s<1$ then
$$ \bE\left(\sum_{\rho_n<\delta N^{1/s}} \rho_n\right)\le \Const N^{1/s}\delta^{1-s}. $$

(d) If $1<s<2$ then
$$ \bE\left(\left(\sum_{\rho_n<\delta N^{1/s}} (\rho_n-\bE(\rho))\right)^2\right)\le
\Const N^{2/s}\delta^{2-s}. $$

(e) If $s=1$ then given $\kappa<1$ there is a set $\tOmega_{N, \delta}$ such that
$\bP(\tilde{\Omega}_{N,\delta}^c)\leq N^{-100}$ and
\begin{equation}
\label{LLe1}
\bE\left(1_{\tOmega_{N,\delta}} \Var_\omega \left(\sum_{\rho_n<\delta N} (\xi_n-\rho_n)\right)^\kappa\right)
\le \Const N^{2\kappa} \delta^{2\kappa},
\end{equation}
\begin{equation}
\label{LLe2}
\bE\left(\left(\sum_{\rho_n<\delta N} \left(\rho_n-\bE\left(\rho I_{\rho<\delta N}\right)\right)
\right)^{2\kappa}\right)
\le \Const N^{2\kappa} \delta^{2\kappa}.
\end{equation}
\end{lemma}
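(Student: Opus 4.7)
\noindent\textbf{Proof plan for Lemma \ref{LmLow}.}

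For parts (a) and (c), I would carry out a direct first-moment calculation. Using stationarity of $\{\rho_n\}$ and the identity $\mathbb{E}_\omega(\xi_n)=\rho_n$, both sides reduce to $N\cdot\mathbf{E}(\rho\,\mathbf{1}_{\rho<M})$ with $M=\delta N^{1/s}$. Writing $\mathbf{E}(\rho\,\mathbf{1}_{\rho<M})=\int_0^M\mathbf{P}(\rho>x)dx-M\,\mathbf{P}(\rho>M)$ and inserting the tail bound $\mathbf{P}(\rho>x)\le Cx^{-s}$ from Lemma \ref{LmRen} gives $\mathbf{E}(\rho\,\mathbf{1}_{\rho<M})\le \frac{C}{1-s}M^{1-s}$ for $s<1$. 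Substituting $M=\delta N^{1/s}$ produces $CN\delta^{1-s}N^{(1-s)/s}=C\delta^{1-s}N^{1/s}$, as claimed.

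For parts (b) and (d), the natural strategy is a second-moment estimate, obtained by expanding the variance as a double sum of covariances $\sum_{n_1,n_2}\mathrm{Cov}_\omega(\xi_{n_1},\xi_{n_2})\mathbf{1}_{E_{n_1}\cap E_{n_2}}$ (and analogously for (d)). The diagonal is bounded by $\sum_n\rho_n^2\mathbf{1}_{E_n}$, whose expectation is $N\,\mathbf{E}(\rho^2\mathbf{1}_{\rho<M})\le CNM^{2-s}=C\delta^{2-s}N^{2/s}$ for $1<s<2$, giving the right order. The off-diagonal is split at the mixing scale $k=K\ln N$: for far pairs ($k\ge K\ln N$) the correlation is at most $N^{-100}$ on the good set $\Omega_N\cap\tilde\Omega_{N,\delta}$ by Lemma \ref{LmCorLong}(a), so their total contribution is negligible. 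For near pairs ($k\le K\ln N$) I would couple the $\xi_n$'s to the independent variables $\bar\xi_n$ of Lemma \ref{LmCorLong}(b), picking up an error of order $N^{-100}$, and then control the remaining covariance using Lemma \ref{Lm2Clusters} together with the exponential mixing of the stationary sequence $\{\rho_n\}$ (which itself follows from the Markov structure in \eqref{z2} and the exponential decay of products $A_{n,k}$).

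For part (e) with $s=1$, the moment $\mathbf{E}(\rho^2\mathbf{1}_{\rho<M})$ scales like $M$ rather than $M^{2-s}$, so the brute-force $L^2$ approach loses a factor of $\delta$. Here the plan is to exploit $\kappa<1$ through the elementary inequality $(x_1+\cdots+x_k)^\kappa\le x_1^\kappa+\cdots+x_k^\kappa$, valid for nonnegative summands, which allows one to move the $\kappa$-th power inside the diagonal sum before taking $\mathbf{E}$. After a dyadic decomposition $\rho_n\in[2^j,2^{j+1})$ and using Lemma \ref{LmRen} on each scale, the geometric series over $0\le j\le\log_2(\delta N)$ yields the required $\delta^{2\kappa}N^{2\kappa}$ bound for \eqref{LLe1}; \eqref{LLe2} is handled by the same scheme applied to $\rho_n-\mathbf{E}(\rho\mathbf{1}_{\rho<M})$ using the mixing of the stationary sequence.

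The hardest step is the near-diagonal contribution in (b) and (e). Because consecutive $\xi_n$'s become almost perfectly correlated as $\rho_n\to\infty$ (Lemma \ref{LmCorShort}), each covariance $\mathrm{Cov}_\omega(\xi_{n_1},\xi_{n_2})$ with $|n_1-n_2|\le K\ln N$ is as large as $\rho_{n_1}\rho_{n_2}$, and a crude Cauchy--Schwarz would introduce an extraneous $\ln N$ factor. Overcoming this requires grouping the sites into correlated blocks of length comparable to $\log\rho_n$ (so that a block containing a large $\rho_n$ has effective length $O(\log M)$, not $O(\log N)$), treating each block as a single contribution of size $\rho_n^2$, and then using Lemma \ref{LmCorLong}(b) to obtain near-independence across blocks; the key input is that the number of blocks contributing at level $\rho\asymp r$ is, by Lemma \ref{LmRen}, of order $Nr^{-s}$, which compensates the block size precisely.
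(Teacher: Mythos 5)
Your treatment of (a) and (c) is correct and matches the paper exactly: both reduce to $N\,\bE(\rho\,\mathbf{1}_{\rho<\delta N^{1/s}})$ and conclude by inserting the tail bound from Lemma~\ref{LmRen}. Your use of subadditivity $(\sum x_j)^\kappa\le\sum x_j^\kappa$ to bring the power of $\kappa$ inside in part (e), and your use of Lemma~\ref{LmCorLong}(a) to kill the far-range ($k\ge K\ln N$) covariances on a good $\omega$-set in parts (b) and (d), are also exactly what the paper does.

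The near-diagonal contribution in (b), (d), (e) is where your plan has a gap, and you correctly flag it as the hardest step. Two of the ideas you list do not apply here. Lemma~\ref{LmCorLong}(b) only gives independence of the truncated $\bar\xi_{n_j}$ when the $n_j$ are separated by at least $K\ln N$; it gives no information about pairs closer than that, so ``coupling to $\bar\xi$'' does nothing for the near-diagonal. Lemma~\ref{Lm2Clusters} controls joint tail events $\{\rho_{n}\ge\delta N^{1/s},\rho_{n-k}\ge\delta N^{1/s}\}$, which is the \emph{complement} of the event you are summing over, so it is not the right input either. Your fallback is to group sites into blocks of variable length $\asymp\log\rho_n$ and ``treat each block as a single contribution of size $\rho_n^2$.'' But the claim that the $\log\rho_n$ near-diagonal terms inside a block contribute only $O(\rho_n^2)$ rather than $O(\rho_n^2\log\rho_n)$ is precisely the point in need of proof; it is not supplied by the block decomposition itself.

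The paper handles this step by a more elementary computation that avoids blocks entirely. From \eqref{SManySteps} one has, for $k\ge 1$,
\begin{equation*}
\rho_{n-k}=p_{n-k}^{-1}q_{n}\,A_{n,k}\,\rho_{n}+B_{n,k}\,p_{n-k}^{-1},
\qquad A_{n,k}=\alpha_{n-1}\cdots\alpha_{n-k+1},
\end{equation*}
where $A_{n,k}$, $B_{n,k}$ are functions of $\{\alpha_j\}_{j<n}$ and are therefore independent of $\rho_n$. Multiplying by $\rho_n\,\chi_n$ (with $\chi_n=\mathbf{1}_{\rho_n<\delta N^{1/s}}$) and taking $\bE$ over the $\alpha_j$'s with $j<n$ gives
\begin{equation*}
\bE(\rho_{n-k}\rho_n\chi_n)\le \Const\bigl[\beta^{k-1}\,\bE(\rho_n^2\chi_n)
+\bE(\rho_n\chi_n)\bigr],
\qquad \beta:=\bE(\alpha)<1.
\end{equation*}
Summing this over $0\le k\le K\ln N$, the geometric factor $\beta^{k-1}$ collapses the $\ln N$-many terms to a single $\bE(\rho_n^2\chi_n)$, while the remaining $\ln N\cdot\bE(\rho_n\chi_n)$ is of strictly lower order for $1<s<2$ since $\rho_n$ has finite mean; this is the paper's \eqref{SumCov}. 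The same decomposition with $\kappa$-th powers and $\bE(\alpha^\kappa)<1$ handles part (e). The key ingredient you are missing is therefore this explicit conditional expectation of $\rho_{n-k}$ given $\rho_n$ (coming from the one-step recursion \eqref{OTRR}/\eqref{SManySteps} and the independence structure of the environment), which is what produces geometric rather than polynomial decay in~$k$ inside the near-diagonal sum.
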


\begin{proof}
(a) Denote $Y_\delta=\sum_{\rho_n<\delta N^{1/s}} \xi_n.$ Then
$$ \bE(Y_\delta)=N \bE(\rho I_{\rho<\delta N^{1/s}}). $$
By Lemma \ref{LmRen} this expectation is bounded by $\Const N^{1/s} \delta^{1-s}$
proving our claim.

(b) Denote $\tY_\delta=\sum_{\rho_n<\delta N^{1/s}} (\xi_n-\rho_n).$ Then
$\mathbb{E}_\omega(\tY_n)=0$ and so it suffices to show that $\Var_\omega(\tY_\delta)=o(N^{2/s})$
except for a set of small probability.
Due to Lemma \ref{LmCorLong} for most $\omega$s we have
\begin{equation}
\begin{aligned}
\label{VAR}
\Var_\omega(\tY_\delta)
&=\left|o(1)+\sum_{n_2-K\ln N<n_1<n_2} 2\Cov_\omega\left(\xi_{n_1} \xi_{n_2}\right)
+ \sum_{n} \Var_\omega\left(\xi_{n} \right)\right|\\
&\leq
1+\Const\sum_{n_2-K\ln N<n_1\leq n_2} \rho_{n_1} \rho_{n_2} \\
\end{aligned}
\end{equation}
where the summation is over pairs with $\rho_{n_i}<\delta N^{1/s}$. The last step uses
Cauchy-Schwartz inequality and the fact that $\xi_n$ has geometric distribution, namely
$\left|\Cov_\omega\left(\xi_{n_1} \xi_{n_2}\right)\right| \leq \sqrt{\Var_\omega\left(\xi_{n_1}\right) \Var_\omega\left(\xi_{n_2}\right)}\leq \rho_{n_1} \rho_{n_2}$.

Next, we estimate the expectation of the last sum in \eqref{VAR}. Set $\chi_n=
I_{\rho_{n}<\delta N^{1/s}}$ and $\beta=\bE(\alpha)<1$; these concise notations will be used only
within this proof. Using \eqref{SManySteps} we can write
$$
\rho_{n-k}\rho_n =p_{n-k}^{-1}\rho_n^2q_n \alpha_{n-1}\dots \alpha_{n-k+1}
+(\alpha_{n-1}\dots \alpha_{n-k+1} +\dots+1)p_{n-k}^{-1}\rho_n.
$$
Since $\rho_n$ and $\{\alpha_j,\ j<n\}$ are independent we obtain
$$
\bE\left(\rho_{n-k}\rho_{n} \chi_{n}\right) \le
\Const\left[\beta^{k-1} \bE\left(\rho_{n}^2
\chi_{n}\right) +
 \bE\left(\rho_{n}\chi_{n}\right)\sum_{j=0}^{k-2} \beta^j\right]
$$
Thus
\begin{equation}
\label{SumCov}
 \bE\left(\sum_{k=0}^{K\ln N}(\rho_{n-k}\rho_{n} \chi_{n})\right)
\leq \Const \left[\bE\left(\rho_{n}^2\chi_n\right)+\ln N \bE\left(\rho_{n}\chi_n\right)\right].
\end{equation}
Hence
$$\bE\left(\sum_{n_2-K\ln N<n_1<n_2} \rho_{n_1} \rho_{n_2}\chi_{n_1}\chi_{n_2}\right)
\leq \bE\left(\sum_{n_2-K\ln N<n_1<n_2} \rho_{n_1} \rho_{n_2}\chi_{n_2}\right)
$$
$$
\leq\Const\sum_{n_2} \left[ \bE\left(\left(\rho_{n_2}\right)^2 \chi_{n_2}\right)+
 \ln N\bE(\rho_{n_2})\right]  \leq\Const \delta^{2-s} N^{2/s}. \
$$

(c) The proof of (c) is the same as proof of (a).

(d) We assume first that
\begin{equation}
\label{SoftDelta}
\nu([\delta N^{1/s} -N^{-100}, \delta N^{1/s}+N^{-100}])\leq N^{-50}.
\end{equation}

In view of Lemma \ref{LmRen} it is enough to to compute the variance of
$$\bsigma=\sum_{\rho_n<\delta N^{1/s}} (\rho_n-\bE(\rho_nI_{\rho_n<\delta N^{1/s}})) $$

Lemma \ref{LmCouple} shows that if \eqref{SoftDelta} holds then for $|n_2-n_1|\geq \tK \ln N$
$$ \Cov \left(\rho_{n_1}I_{\rho_{n_1}<\delta N^{1/s}},
\rho_{n_2}I_{\rho_{n_2}<\delta N^{1/s}}\right)<\frac{1}{N^3} $$
provided that $\tK$ is large enough.
Hence
$$\left|\Var(\bsigma)\right|\leq 1+\left|\sum_{|n_1-n_2|<\tK\ln N}
\Cov\left(\rho_{n_1}I_{\rho_{n_1}<\delta N^{1/s}},
\rho_{n_2}I_{\rho_{n_2}<\delta N^{1/s}}\right)\right| . $$
provided that $\tK$ is sufficiently large.
The estimate of the last sum is exactly the same as in part (b). This completes the
proof of part (d) in the case when \eqref{SoftDelta} holds.
In case \eqref{SoftDelta} fails
we can repeat the computation below with $\delta$ replaced by $\delta'$ and $\delta''$ where
$\delta', \delta''$ satisfy \eqref{SoftDelta} and such that
$$\delta<\delta'<\delta+N^{-(50+1/s)}, \quad \delta-N^{-(50+1/s)}<\delta''< \delta. $$
The bound for $\delta'$ will allow us to estimate the sum of part (d) from above and
the bound for $\delta''$ will allow us to estimate the sum of part (d) from below.

(e) We prove \eqref{LLe1}, \eqref{LLe2} is similar.
In view of \eqref{VAR} it suffices to estimate
$$\bE\left(\sum_{n_1\leq n_2<n_1+K\ln N, \rho_{n_1}<\delta N, \rho_{n_2}<\delta N}
\rho_{n_1} \rho_{n_2}\right)^\kappa $$
We have
$$\bE\left(\sum_{n_1\leq n_2<n_1+K\ln N, \rho_{n_1}<\delta N, \rho_{n_2}<\delta N}
\rho_{n_1} \rho_{n_2}\right)^\kappa $$
$$\leq \sum_{n_1\leq n_2<n_1+K\ln N}
\bE\left(\left(\rho_{n_1} \rho_{n_2}\right)^\kappa
I_{\rho_{n_1}<\delta N} I_{\rho_{n_2}<\delta N}\right).$$
Using that $\bE(\alpha^\kappa)<1$ we can proceed as in part (b) to estimate the last sum by
$$C \sum_n \bE\left(\left(\rho_n\right)^\kappa I_{\rho_n<\delta N}\right)=
\tC N^{2\kappa} \delta^{2\kappa}. $$
\end{proof}

Lemma \ref{LmLow} allows us to concentrate on sites where $\rho_n\ge\delta N^{1/s}. $
In view of Lemma \ref{LmRen} for each fixed $\delta$ we expect to have finitely many such points on $[0, N]$
(namely the expected number of points is $\cO(\delta^{-s})$).

\begin{definition} Let $M=M_N:=\ln\ln N.$
We shall say that $n$ is a {\it massive} site if $\rho_{n}\geq \delta
N^{1/s}$.
A site $n\in[0,N-1]$ is {\it marked} if it is massive
and $\rho_{n+j}<\delta N^{1/s} $ for
$1\leq j\leq M.$
For $n$ marked the interval $[n-M, n]$ is called the {\it cluster} associated to $n.$
\end{definition}
It may happen that
not all massive sites belong to one of the clusters.
This situation is controlled by the following
\begin{lemma}\label{non-cluster}
\begin{equation}
\label{Eq4Peaks}\bP\left(\rho_{n}\geq \delta
N^{1/s} \hbox{ and $n$ is not in a cluster }  \right)\le \Const \frac{\beta^M}{N} .
\end{equation}
\end{lemma}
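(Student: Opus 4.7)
The plan is to show that if $n$ is massive and not in a cluster, then one can extract \emph{another} massive site at distance in $(M, 2M]$ from $n$; the claim then follows by applying Lemma \ref{Lm2Clusters}(a) to this pair and summing a geometric tail.

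First I unpack the definitions. Since a marked $n'$ has cluster $[n'-M, n']$, the site $n$ belongs to a cluster iff $\{n, n+1, \ldots, n+M\}$ contains a marked site. Hence ``$n$ massive and not in a cluster'' forces every massive site in $[n, n+M]$ to be non-marked. By the definition of marked, for any such massive $m$ there is then some $j \in [1,M]$ with $\rho_{m+j} \ge \delta N^{1/s}$, i.e.\ another massive site in $[m+1, m+M]$.

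This enables a chain construction: set $n_0 := n$ and recursively pick $n_{i+1} \in [n_i+1, n_i+M]$ massive as long as $n_i \in [n, n+M]$; let $L$ be the first index with $n_L > n+M$. Since $n_{L-1} \le n+M$ and $n_L - n_{L-1} \le M$, the distance $J := n_L - n$ lies in $(M, 2M]$, and both $\rho_n$ and $\rho_{n+J}$ are $\ge \delta N^{1/s}$. Therefore
\[
\{n \text{ is massive and not in a cluster}\} \subset \bigcup_{J=M+1}^{2M} \{\rho_n \ge \delta N^{1/s},\ \rho_{n+J} \ge \delta N^{1/s}\}.
\]

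By translation invariance of the i.i.d.\ environment, each probability in the union equals $\bP(\rho_{n-J} \ge \delta N^{1/s}, \rho_n \ge \delta N^{1/s})$. For $N$ large enough we have $2M = 2\ln\ln N \le \eps_1 \ln N$, so Lemma \ref{Lm2Clusters}(a) bounds each term by $C\beta^J/N$; the geometric sum over $J \in \{M+1, \ldots, 2M\}$ is at most $C'\beta^M/N$, which is \eqref{Eq4Peaks}. The key point is the chain argument: a naive bound using only that some site in $[n+1, n+M]$ is massive would give $\sum_{k=1}^M C\beta^k/N = O(1/N)$, losing the factor $\beta^M$; forcing the distance between the two massive sites to exceed $M$ is exactly what yields the improvement.
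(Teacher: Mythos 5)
Your proof is correct and takes essentially the same approach as the paper: both locate a second massive site at distance in $(M, 2M]$ from $n$ (the paper does this by taking the largest massive point in $(n, n+M)$ and then the next massive point after it; you run an explicit chain), and both then apply Lemma \ref{Lm2Clusters}(a) and sum the geometric tail.
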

\begin{proof} Suppose that  $n$ is a massive point which is not in a cluster.
Then consider all massive points $n_i$ such that $n<n_1<...<n_k<n+M$. Note that
such points exist because otherwise $n$ would have been a marked point.
Let now $n^*>n_k$ be the nearest to $n_k$ massive point. Then by construction  $n^*\ge n+M$.
Also $n^*\le n+2M$ because otherwise $n_k$ would have been a marked point and
$n$ would belong to the $n_k$-cluster. Hence the event
$$\{n \hbox{ is massive and not in a cluster}\}
\subset\bigcup_{n'\in[n+M, n+2M]}\{\rho_{n}\geq \delta
N^{1/s},\,\rho_{n'}\geq \delta
N^{1/s}\}.$$
By Lemma \ref{Lm2Clusters}(a) we obtain
$$\begin{aligned}
&\bP\left(n \hbox{ is massive and not in a cluster} \right)\\
&\le \sum_{n'=n+M}^{n+2M}\bP\left(
\rho_{n}\geq \delta
N^{1/s},\, \rho_{n'}\geq \delta
N^{1/s}\right)\le\Const \frac{\beta^M}{N}
\end{aligned}
$$
which proves our statement.
\end{proof}
It is clear from the just presented proof that the event \newline
{\centerline{$\{\hbox{there is $n$ which is massive and not in a cluster}\}$}}
belongs to the set of environments
\begin{equation}
\label{Eq2Peaks}
\brOmega_N^\delta:=\{\exists n_1, n_2: M< n_2-n_1\leq 2M\text{ and }
\rho_{n_1}\geq \delta N^{1/s}, \rho_{n_2}\geq \delta N^{1/s}\}.
\end{equation}
Then again by Lemma \ref{Lm2Clusters}(a) we have that
\begin{equation}
\label{Eq3Peaks}\bP\left(\brOmega_N^\delta\right)\le\sum_{n=0}^{N-1}\sum_{n_1=n-2M}^{n-M}
\bP\{\rho_{n_1}\geq \delta N^{1/s},\, \rho_{n}\geq \delta
N^{1/s}\}\le \Const\beta^{M}.
\end{equation}
Obviously $\bP(\brOmega_N^\delta)\to 0$ as $N\to\infty.$

It is clear from the definitions that $\bP(n\hbox{ is massive and in a cluster}) \ge  \bP(n\hbox{ is marked})$. The following lemma shows that in fact these quantities are of the same order of smallness.
\begin{lemma}\label{in-cluster}
\begin{equation}
\label{Eq5Peaks}\bP\left(\rho_{n}\geq \delta
N^{1/s} \hbox{ and $n$ is in a cluster }  \right)\le \Const
\bP\left(\hbox{ $n$ is marked }  \right).
\end{equation}
\end{lemma}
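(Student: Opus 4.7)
The plan is to reduce the statement to a conditional probability lower bound and then establish that bound using the size-biased (tilted) description of the environment near a site with atypically large $\rho_n$. Trivially $\{n \text{ massive and in a cluster}\} \subset \{\rho_n \ge \delta N^{1/s}\}$, so it suffices to exhibit a constant $c>0$ (depending on $\delta$ but not on $N$) such that
\[
\bP(n \text{ is marked} \mid \rho_n \ge \delta N^{1/s}) \ge c,
\]
for then $\bP(n \text{ massive and in a cluster}) \le \bP(\rho_n\ge\delta N^{1/s}) \le c^{-1}\,\bP(n \text{ is marked})$, which is the conclusion with $\Const = c^{-1}$.

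To prove the conditional bound I would focus on the range $\rho_n \in [A, 2A]$ with $A := \delta N^{1/s}$, which by Lemma \ref{LmRen} carries a constant fraction of the conditioning mass. Writing $z_n = p_n \rho_n$ and iterating the recurrence $z_{m+1}=(z_m-1)/\alpha_{m+1}$, one has $z_{n+k}\approx z_n/(\alpha_{n+1}\cdots\alpha_{n+k})$ for large $z_n$; thus $\rho_{n+k}<A$ becomes, to leading order, $\alpha_{n+1}\cdots\alpha_{n+k}>z_n/(p_{n+k}A)$, a uniformly bounded barrier (at most $2(1-\eps_0)/\eps_0$ on the range $z_n\in[p_nA,2p_nA]$). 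The classical input is that, conditional on $z_n$ being large, the law of $(\alpha_{n+1},\alpha_{n+2},\dots)$ is asymptotically i.i.d.\ with the tilted distribution $d\tilde\mu(a)=a^s\,d\mu_\alpha(a)$ (normalized since $\bE\alpha^s=1$). Because $h\mapsto\log\bE\alpha^h$ is strictly convex and vanishes at $h=0$ and $h=s$, its derivative at $s$ is strictly positive, giving $\tilde\bE\ln\alpha>0$. Under $\tilde\mu$ the partial sums $S_k=\sum_{i=1}^k\ln\alpha_{n+i}$ form a random walk with positive drift, so $\tilde\bP(S_k>b \text{ for all } k\ge 1)\ge c_b>0$ for any fixed barrier $b$; applied with $b=\log\bigl(2(1-\eps_0)/\eps_0\bigr)$, this delivers the desired conditional bound uniformly in $M$.

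The main obstacle will be the tilted-measure statement itself: rigorously replacing, with errors uniform in $N$, the conditional law of $(\alpha_{n+j})_{j\ge 1}$ given $z_n\ge A$ by i.i.d.\ samples from $\tilde\mu$ is a Kesten--Goldie-type renewal result and is the delicate step. An elementary alternative would be to bypass the abstract tilted description altogether: construct, for $z_{n+M+1}$ in an appropriate window and $(\alpha_{n+1},\dots,\alpha_{n+M})$ whose partial products lie in a prescribed range, an explicit event of probability $\gtrsim 1/N$ on which $n$ is marked; the computation uses $\bE\alpha^s=1$ together with the tail $\bP(z>x)\sim c/x^s$ of Lemma \ref{LmRen} directly.
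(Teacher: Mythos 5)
Your reduction is correct as far as it goes: since $\{n \text{ massive and in a cluster}\}\subset\{\rho_n\ge\delta N^{1/s}\}$, the claim follows if $\bP(n \text{ marked}\mid\rho_n\ge\delta N^{1/s})\ge c(\delta)>0$ uniformly in $N$. But note that this conditional bound is precisely the content of Lemma~\ref{Lmmuinf}(c) ($\mu^\infty(1)>0$), which the paper \emph{derives from} Lemma~\ref{in-cluster} (together with Lemma~\ref{non-cluster} and the contradiction with Lemma~\ref{LmRen}). So reducing Lemma~\ref{in-cluster} to that conditional bound and then invoking a tilted-measure description would be an acceptable but genuinely different route only if you actually establish the Kesten--Goldie conditional law with error bounds uniform in $N$ and $M=M_N\to\infty$ --- a substantial piece of machinery the paper never develops. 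You flag this yourself as ``the delicate step,'' and it really is the whole argument: your ``elementary alternative'' is only a sketch, and the claimed ``uniformly bounded barrier'' needs care because for $\rho_n$ near the top of the window $[A,2A]$ and $p_n$ not small, the very first constraint $\rho_{n+1}<A$ can force $q_{n+1}>1$ (after cancelling the $p_{n+1}$'s one finds the step-one condition is $q_{n+1}\gtrsim z_n/A$), so the window on $\rho_n$ cannot be chosen arbitrarily.

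The paper's proof sidesteps all of this with an elementary and short argument you should compare against. It writes $\{n\text{ massive and in a cluster}\}\subset\bigcup_{k=0}^M\{\rho_n\ge\delta N^{1/s},\; n+k\text{ marked}\}$, applies stationarity to rewrite each term as $\bP(\rho_{n-k}\ge\delta N^{1/s}\mid n\text{ marked})\,\bP(n\text{ marked})$, and then bounds the conditional factor by $\Const\,\beta^k$: the crucial observation is that if $n$ is marked then $\rho_{n+1}<\delta N^{1/s}$, and the one-step recursion $\rho_n=p_n^{-1}(1+q_{n+1}\rho_{n+1})$ then pins $\rho_n$ to a \emph{bounded} multiple of $\delta N^{1/s}$ (namely $\rho_n<2\eps_0^{-1}\delta N^{1/s}$). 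Combined with \eqref{SManyStepsIneq-ty} and $M\eps_0^M\ll\delta N^{1/s}$, the event $\rho_{n-k}\ge\delta N^{1/s}$ forces $A_{n,k}=\alpha_{n-1}\cdots\alpha_{n-k+1}$ to be $\Omega(1)$, which Markov's inequality (with some $0<h<s$) makes exponentially unlikely in $k$, independent of $\rho_n$ by the independence of $A_{n,k}$ and $\{\rho_j\}_{j\ge n}$. Summing the geometric series over $k\le M$ gives the constant. No size-biasing or conditional renewal theory is needed; the ``upper bound on $\rho_n$ forced by marking'' is the idea your proposal is missing.
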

\begin{proof} The event
$$\{n \hbox{ is massive and in a cluster}\}
\subset\bigcup_{k=0}^{M}\{\rho_{n}\geq \delta
N^{1/s},\,n+k \hbox{ is marked}\}.$$
Since $\rho_{n}$  is a stationary sequence we have
$$\begin{aligned}
&\bP\{\rho_{n}\geq \delta
N^{1/s},\,n+k \hbox{ is marked}\}=\\
&\bP\{\rho_{n-k}\geq \delta
N^{1/s}\,|\,n \hbox{ is marked}\}\times
\bP\{n \hbox{ is marked}\}.
\end{aligned}$$
We shall now prove that $\bP\{\rho_{n-k}\geq \delta
N^{1/s}\,|\,n \hbox{ is marked}\}\le \Const \beta^k$, where $\beta=\bE(\alpha^h)$, $0<h<s$.
Since $M$ is growing very slowly we have for
$N\ge N_\delta$ that $M\eps_0^M\ll 0.5\delta N^{1/s}$.
Then \eqref{SManyStepsIneq-ty} implies that $\rho_{n-k}\le \brc A_{n,k}\rho_n+0.5\delta N^{1/s}$
and therefore
$$\bP\{\rho_{n-k}\geq \delta
N^{1/s}\,|\,n \hbox{ is marked}\}\le
\bP\{\brc A_{n,k}\rho_n>0.5\delta N^{1/s}|\,n \hbox{ is marked}\}.$$
For $n$ marked $\rho_{n+1}< \delta
N^{1/s}$ and hence $\rho_{n}< 2\eps_0^{-1}\delta N^{1/s}$. Since $A_{n,k}$
and $\{\rho_j\}_{j\ge n}$ are independent we have, with $C=2\brc\eps_0^{-1}$:
$$\begin{aligned}
&\bP\{\brc A_{n,k}\rho_n>0.5\delta N^{1/s}|\,n \hbox{ is marked}\}\\
&\le \bP\{ C A_{n,k}\delta N^{1/s}>0.5\delta N^{1/s}|\,n \hbox{ is marked}\}
=\bP\{ A_{n,k}>0.5C^{-1}\}\le \Const \beta^k.
\end{aligned}
$$
(Once again, last step is due to the Markov inequality.)
Finally we obtain
$$\begin{aligned}
&\bP\left(n \hbox{ is massive and is in a cluster} \right)\le \\
&\le\Const (\sum_{k=0}^{M} \beta^k)\times
\bP\{n \hbox{ is marked}\}\le\Const\,\bP\{n \hbox{ is marked}\}.
\end{aligned}
$$
\end{proof}

We shall now turn to the analysis of the properties of clusters.
The next lemma is the main technical result of the paper.
It will be proved in Section \ref{ScPVis}. We need one more
\begin{definition}
For each marked point $n,$ we set
\begin{equation}\label{Defabm}
a_n=\rho_n/\delta N^{1/s}, \quad
b_n=\frac{\Sigma_{j=0}^M \rho_{n-j}}{\rho_n} \text{ and }
m_n=\Sigma_{j=0}^M \rho_{n-j}=\delta N^{1/s} a_n b_n .
\end{equation}
We call $m_n$ the mass of the cluster.
\end{definition}
\begin{lemma}
\label{LmPVis} For a given $\delta>0$ the following holds:

(a) The point process $\{(\frac{n}{N}, a_n, b_n):\ n\ \hbox{ is marked }\}$ converges
as $N\to\infty$ to a point process
$\{(t_j, \ta_j, \tb_j)\}$ where $t_j$ form a Poisson process with a constant
intensity $\tc \delta^{-s}.$

(b) For a given (finite) collection $\{t_j\}$ the corresponding
collection $\{(\ta_j, \tb_j)\}$ consist of i.i.d. random variables
which are independent of
$\{t_j\}$ (except that both collections have the same cardinality).
The distributions of the pair $(\ta,\tb)$ does not depend on
$\delta$.

(c) Consequently\footnote{part (c) of Lemma \ref{LmPVis} follows from parts (a) - (b) and Lemma \ref{LmPT}.}
$\{\left(\frac{n}{N}, \frac{m_n}{N^{1/s}}\right)\}$ converges to a
Poisson process $\Lambda^\delta=\{(t_j, \Theta_j)\}$ on
$[0,1]\times[\delta, \infty).$
\end{lemma}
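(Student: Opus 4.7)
Observation on (c): once parts (a) and (b) are in place, part (c) is just a pushforward under the measurable map $(t,a,b)\mapsto(t,\delta a b)$ applied to the marked Poisson process $\{(t_j,\ta_j,\tb_j)\}$, which by Lemma \ref{LmPT}(b)-(c) is again a Poisson process on $[0,1]\times[\delta,\infty)$. So the whole argument reduces to (a) and (b), and these really say two things: the \emph{locations} of marked points form a Poisson process on $[0,1]$ (with some intensity $\tc\delta^{-s}$), and the \emph{marks} $(a_n,b_n)$ become i.i.d.\ and independent of the locations, with a distribution not depending on $\delta$.

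\textbf{Locations.} I would prove Poisson convergence of the location component by the method of factorial moments (equivalently, Chen--Stein), exactly in the style of Lemma \ref{Lm2Clusters}. First, using stationarity and Lemma \ref{LmRen}, $\mathbf P(\rho_n\ge\delta N^{1/s})\sim c^\ast\delta^{-s}/N$. Then I would show that the probability of the extra constraint $\rho_{n+j}<\delta N^{1/s}$ for $1\le j\le M$ contributes only a factor $\tc/c^\ast$ in the limit, using that for large $\rho_n$ the variables $\rho_{n+1},\dots,\rho_{n+M}$ are controlled by the forward environment only (so a conditional computation factors). This identifies $\tc$ and gives $\mathbf P(n \text{ marked})\sim \tc\delta^{-s}/N$. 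For the higher factorial moments, fix disjoint macroscopic intervals $I_1,\dots,I_r\subset[0,1]$ and multi-indices $k_1,\dots,k_r$; the sum over ordered tuples of marked points splits into \emph{close} tuples (two indices within $K\ln N$) and \emph{distant} tuples. The close contribution is negligible by Lemma \ref{Lm2Clusters} combined with \eqref{Eq3Peaks}, while for distant tuples I would couple the $\rho_{n_i}$ to independent copies via Lemma \ref{LmCouple}, so the expectation factorizes into the product of one-point probabilities, producing the Poisson factorial moment $\prod_i(\tc\delta^{-s}|I_i|)^{k_i}$.

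\textbf{Marks.} For the joint law of the marks I would exploit the asymmetry in \eqref{OTRR}: the size of $\rho_n$ is $p_n^{-1}z_n$, so the heavy tail lives in $z_n$, which is a functional of the forward environment $\{\alpha_{n+k},p_{n+k}\}_{k\ge 1}$; the ratios $\rho_{n-j}/\rho_n$ live in the backward environment. Using \eqref{SManySteps},
\[
\frac{\rho_{n-j}}{\rho_n}=p_{n-j}^{-1}q_n\,\alpha_{n-1}\cdots\alpha_{n-j+1}+O(1/\rho_n),
\]
so on the event $\{\rho_n\ge\delta N^{1/s}\}$ the second term is uniformly $o(1)$ and, letting $M=\ln\ln N\to\infty$,
\[
b_n\;\longrightarrow\;\tb:=1+\sum_{j=1}^{\infty}p_{n-j}^{-1}q_n\,\alpha_{n-1}\cdots\alpha_{n-j+1},
\]
which is a.s.\ convergent because $\mathbf E(\alpha^h)<1$ for small $h$ (as in \eqref{beta1,2}). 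The conditional distribution of $a_n=\rho_n/(\delta N^{1/s})$ given $a_n\ge 1$ converges to the Pareto law $\mathbf P(\ta>a)=a^{-s}$ on $[1,\infty)$ by Lemma \ref{LmRen}. Since $\tb$ is $\sigma(\alpha_i,p_i:i\le n)$-measurable while $\ta$ is (asymptotically) a function of the forward variables, independence $\ta\perp\tb$ follows from independence of the $p_i$'s; crucially, the isolation conditions $\rho_{n+j}<\delta N^{1/s}$ involve only forward variables as well, so they modify the forward conditional law (and thus the intensity $\tc$) but do not couple $\ta$ and $\tb$. The distribution of $(\ta,\tb)$ is manifestly $\delta$-independent, as required.

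\textbf{Putting it together and the main obstacle.} Combining the two steps, I would prove joint convergence of $\bigl\{(n/N, a_n, b_n)\bigr\}$ to a marked Poisson process with i.i.d.\ marks by the same factorial moment scheme: evaluate $\mathbf E\bigl[\prod_i H(a_{n_i},b_{n_i})\bigr]$ on distant tuples by the coupling of Lemma \ref{LmCouple}, which reduces to products of single-site expectations $\mathbf E[H(\ta,\tb)\mid n\text{ marked}]$, and discard close tuples by Lemma \ref{Lm2Clusters}. The main obstacle is the careful bookkeeping at the two points where the cluster structure interacts with the heavy tail: (i) showing that the forward conditioning ``$\rho_{n+1},\dots,\rho_{n+M}$ are all below the threshold'' changes $\tc$ by a factor but does not affect the Pareto shape of $\ta$ (this uses Lemma \ref{LmRen} on a shifted level, together with \eqref{SManyStepsIneq-ty1} to control the backpropagation of the threshold through a growing but slow $M=\ln\ln N$); and (ii) controlling the error in the approximation $\rho_{n-j}/\rho_n\approx p_{n-j}^{-1}q_n\alpha_{n-1}\cdots\alpha_{n-j+1}$ uniformly in $j\le M$ on the high-probability set $\{\rho_n\ge\delta N^{1/s}\}$, so that the infinite-series random variable $\tb$ really is the limit of the truncated sums $b_n$.
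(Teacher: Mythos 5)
You have the right skeleton: close tuples are discarded via Lemma \ref{Lm2Clusters}, distant tuples are decoupled via Lemma \ref{LmCouple}, and $b_n$ is analyzed through the backward recursion \eqref{SManySteps}, recovering the limiting random variable $\tb=1+\sum_{j\ge 1}p_{-j}^{-1}q_0\alpha_{-1}\cdots\alpha_{-j+1}$. Your factorial-moment scheme is a legitimate alternative to the paper's characteristic-function big-block/small-block argument; both handle inter-block dependence in essentially the same way.

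However, the claim that $\ta$ is Pareto, $\bP(\ta>y)=y^{-s}$, is incorrect, and this is precisely the point you flag as the main obstacle while asserting the wrong resolution. Conditioning on $\rho_n\ge\delta N^{1/s}$ alone does give Pareto by Lemma \ref{LmRen}, but the marked event also imposes $\rho_{n+j}<\delta N^{1/s}$ for $1\le j\le M$, and those constraints do not decouple from the level $y$. Writing $\rho_{n+j}=D_jz_{n+m}+O(1)$ with $D_j=p_{n+j}^{-1}\alpha_{n+j+1}\cdots\alpha_{n+m}$, the event $\{\rho_n\ge\delta N^{1/s}y,\;\rho_{n+j}<\delta N^{1/s}\ \forall j\le m\}$ forces $z_{n+m}$ into the window $[\delta N^{1/s}y/D_0,\ \delta N^{1/s}/\max_jD_j)$, which is nonempty only if $\max_jD_j<D_0y^{-1}$. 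The resulting limit (this is Lemma \ref{Lmmuinf}(a) of the paper) is
\begin{equation*}
\mu^m(y)=\delta^{-s}c\,\bE\!\left[\bigl(D_0^s y^{-s}-\max_{1\le j\le m}D_j^s\bigr)\,I_{\max_j D_j<D_0y^{-1}}\right],
\end{equation*}
which is not proportional to $y^{-s}$: both the bracket and the indicator depend on $y$, and the indicator shrinks as $y$ grows, so $\mu^\infty(y)$ decays strictly faster than $y^{-s}$. Hence the forward conditioning changes the \emph{shape} of the mark distribution, not merely the intensity prefactor $\tc$; the correct law of $\ta$ is $\bP(\ta>y)=\mu^\infty(y)/\mu^\infty(1)$. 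The paper also needs to show $\mu^\infty(1)>0$ (Lemma \ref{Lmmuinf}(c)), which it does by contradiction using Lemmas \ref{non-cluster} and \ref{in-cluster}; your proposal leaves positivity of the intensity implicit.

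Since Lemma \ref{LmPVis} only asserts that the law of $(\ta,\tb)$ exists and is $\delta$-independent, not that it is Pareto, your argument can be repaired by replacing the Pareto claim with the genuine limit $\mu^\infty(y)/\mu^\infty(1)$ and supplying the positivity step; the remainder of the structure (clustering, coupling, factorial moments, and the pushforward for part (c)) is sound.
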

We claim that $\Lambda^\delta$ has a limit as $\delta\to 0$ in the
following sense. Let $\Phi$ be a continuous function whose support
is a compact set disjoint from the segment $\{\theta=0\}.$ Then
$$ \lim_{\delta\to 0} \bE_{\Lambda^\delta} \left(\sum_j \Phi(t_j, \Theta_j)\right) $$
exists. Indeed let $\tdelta<\delta.$ Consider again the converging sequence
$\{\left(\frac{n}{N}, \frac{m_n}{N^{1/s}}\right)\}$ corresponding to $\tdelta$.
We may have more clusters
corresponding to $\tdelta$ but for any fixed $b$ it is unlikely that one of those
new clusters will have mass greater than $bN^{1/s}$ since this would mean that
all points in that cluster would have
\begin{equation}
\label{LargeMass}
 \rho_{n}<\delta N^{1/s}, \text{ but }
\sum_{j=0}^M\rho_{n-j}>\frac{b N^{1/s}}{\delta},
\end{equation}
where the sum is over $j$ in the additional cluster.
But \eqref{LargeMass} is unlikely in view of parts (c) and (d) of Lemma \ref{LmLow}.

The second distinction between $\Lambda^\delta$ and $\Lambda^\tdelta$
is the following. Consider a $\delta$-cluster $\brC$ and a $\tdelta$-cluster
$\brrC$ intersecting it. Then $\brC$ and $\brrC$ are shifted with respect to each
other so they have different masses. However, with probability close to 1
the masses of all such pairs of clusters differ by a relatively small amount.
Indeed $\brrC\setminus\brC$  always contains only sites with $\rho_n<\delta N^{1/s}$
and $\brC\setminus\brrC$
is unlikely to contain sites where $\rho_n\geq \delta N^{1/s}$ since this is only
possible for $\omega\in\brOmega_N^\tdelta$ where $\brOmega_N^\tdelta$ is the set defined by
\eqref{Eq2Peaks}. On the other hand, from Lemma \ref{LmLow} we know that terms
with $\rho_n<\delta N^{1/s}$ are unlikely to make a large contribution.

Let $\Lambda=\lim_{\delta\to 0} \Lambda^\delta.$ Let $\{\Theta_j\}$ be the projection of
$\Lambda$ into the second coordinate. By Lemma \ref{LmPT}(a), $\{\Theta_j\}$ is a Poisson process.
\begin{lemma}
\label{LmPTheta}
There exists $\bc$ such that the intensity of $\{\Theta_j\}$ equals to
$\frac{\bc}{\theta^{1+s}}.$
\end{lemma}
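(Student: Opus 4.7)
The plan is to combine the marking and projection rules of Section \ref{SSPP} with a scaling/consistency argument relating $\Lambda^\delta$ for different values of $\delta$. By Lemma \ref{LmPVis}(a)--(c), $\Lambda^\delta=\{(t_j,\Theta_j)\}$ is a Poisson process on $[0,1]\times[\delta,\infty)$ in which the first coordinates form a rate-$\tc\delta^{-s}$ Poisson process on $[0,1]$, while the marks $\Theta_j=\delta\,\ta_j\tb_j$ are i.i.d.\ and independent of the first coordinates, with the joint distribution of $(\ta_j,\tb_j)$ being $\delta$-independent. Applying Lemma \ref{LmPT}(b) to mark each $t_j$ by $\Theta_j$, and then projecting onto the second coordinate via Lemma \ref{LmPT}(a), the point process $\{\Theta_j\}$ is Poisson on $[\delta,\infty)$ with intensity
$$\nu_\delta(d\theta)=\tc\,\delta^{-s}\,\bP\!\left(\delta\,\ta\tb\in d\theta\right).$$
Since the limit $\Lambda=\lim_{\delta\to 0}\Lambda^\delta$ was already shown to exist in the paragraphs preceding the statement, $\{\Theta_j\}$ is Poisson on $(0,\infty)$ with intensity $\nu=\lim_{\delta\to 0}\nu_\delta$.

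Next I would pin down $\nu$ by exploiting the consistency $\nu_{\delta_1}|_{[\delta,\infty)}=\nu_\delta$ for every pair $\delta_1<\delta$. Writing $F(x):=\bP(\ta\tb\ge x)$, this reads
$$\tc\,\delta^{-s}F\!\left(\theta/\delta\right)=\tc\,\delta_1^{-s}F\!\left(\theta/\delta_1\right)\qquad\text{for }\theta\ge\delta.$$
Hence the quantity $\delta^{-s}F(\theta/\delta)$ does not depend on $\delta\in(0,\theta]$; setting $\delta=\theta$ gives the common value $F(1)\theta^{-s}$, so $F(x)=F(1)\,x^{-s}$ for every $x\ge 1$. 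Since $\ta\ge 1$ and $\tb\ge 1$ almost surely by the definitions $a_n=\rho_n/(\delta N^{1/s})\ge 1$ and $b_n\ge 1$ (see \eqref{Defabm}), we have $F(1)=1$. Consequently $\nu_\delta([\theta,\infty))=\tc\,\theta^{-s}$ for all $\theta\ge\delta$, yielding
$$\nu(d\theta)=\frac{s\tc}{\theta^{1+s}}\,d\theta,$$
which is the claimed form with $\bc=s\tc$.

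The hard part is to justify the consistency $\nu_{\delta_1}|_{[\delta,\infty)}=\nu_\delta$ rigorously. When one refines the cut-off from $\delta$ to $\delta_1<\delta$ some marked sites at level $\delta_1$ lie outside the $\delta$-cluster structure, and the boundaries of the $\delta$-clusters themselves shift. The paragraphs preceding the statement of the lemma handle precisely these two effects: genuinely new $\delta_1$-clusters cannot have mass $\ge\delta N^{1/s}$ with appreciable probability, which is enforced by parts (c)--(d) of Lemma \ref{LmLow} applied to \eqref{LargeMass}; and the mismatch between a $\delta$-cluster $\brC$ and the overlapping $\delta_1$-cluster $\brrC$ is controlled by the estimate \eqref{Eq3Peaks} on $\bP(\brOmega_N^{\delta_1})$, so that up to an event of vanishing probability the two cluster systems produce the same set of $\Theta$-values in the range $[\delta,\infty)$. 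These two facts together give the required consistency and complete the argument.
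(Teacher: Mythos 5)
Your overall strategy — exploit how $\Lambda^{\delta}$ depends on $\delta$ to force the power-law form of the limiting intensity — is the same idea the paper uses, and the first paragraph of your write-up (identifying $\nu_\delta(d\theta)=\tc\delta^{-s}\,\bP(\delta\,\ta\tb\in d\theta)$ via Lemma~\ref{LmPT}, and passing to the limit $\nu=\lim_{\delta\to0}\nu_\delta$) is correct. But the step you flag as ``the hard part'' is in fact false as stated, and the argument built on it does not close.

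The consistency identity $\nu_{\delta_1}\big|_{[\delta,\infty)}=\nu_\delta$ for \emph{finite} $0<\delta_1<\delta$ is equivalent, as your own algebra shows, to $F(x)=F(1)\,x^{-s}$ for \emph{every} $x\ge 1$, i.e.\ to the statement that the random variable $\ta\tb$ is exactly Pareto($s$) on $[1,\infty)$. There is no reason for this to hold, and in general it does not: $\tb$ is the specific stationary functional $1+p_{-1}^{-1}q_0+p_{-2}^{-1}q_0\alpha_{-1}+\dots$ and $\ta$ has the distribution $\mu^{\infty}(y)/\mu^{\infty}(1)$ from Lemma~\ref{Lmmuinf}, which is not a pure power in $y$. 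What the paragraphs preceding the lemma actually establish is much weaker: that the $\delta$- and $\tdelta$-cluster systems differ by amounts that vanish \emph{as $\delta\to0$}, which is exactly what is needed for the existence of the limit $\Lambda=\lim_{\delta\to0}\Lambda^\delta$, but does not give equality of cluster masses at two fixed cut-offs. The families $\Lambda^\delta$ are \emph{not} restrictions of one another: lowering the threshold shifts the marked point (hence the cluster window $[n-M,n]$) to the right and changes the mass by a nonzero amount. So $\nu_\delta([\theta,\infty))$ genuinely depends on $\delta$ for $\delta\le\theta$; only its $\delta\to0$ limit, which requires knowing the tail of $F$, can be claimed to equal $\nu([\theta,\infty))$ — and knowing that tail is exactly the thing you were trying to prove.

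The paper avoids this trap by working directly with the \emph{limit} $\Lambda$. Since $\Lambda=\lim_{\delta\to0}\Lambda^{\delta}=\lim_{\delta\to0}\Lambda^{\kappa\delta}$ for every fixed $\kappa>0$, and since replacing $\delta$ by $\kappa\delta$ multiplies the $t$-intensity $\tc\delta^{-s}$ by $\kappa^{-s}$ while rescaling the mark $\Theta_j=\delta\ta_j\tb_j$ by $\kappa$ (with the law of $\ta_j\tb_j$ unchanged), one gets a \emph{self-similarity} of the limiting process: scaling $\theta\mapsto\kappa\theta$ multiplies the intensity of $\{\Theta_j\}$ by $\kappa^{-s}$. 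Together with the change-of-variables formula \eqref{ChangeInt} this forces $f(\theta)=\bc\,\theta^{-(1+s)}$, without any statement about the finite-$\delta$ tails of $\ta\tb$. If you replace your finite-$\delta$ consistency claim by this scaling-invariance of $\Lambda$, your argument goes through.
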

\begin{proof}
For each $\kappa,$ we have $\Lambda=\lim_{\delta\to 0} \Lambda^{\kappa\delta}.$
$\Lambda^\delta$ depends on $\delta$ in two ways. First its intensity is proportional to
$\delta^{-s}.$ Second, $\Theta_j/\delta=\ta_j \tb_j.$
Recall that the distribution of  $\ta_j\tb_j$ is independent of $\delta.$
Therefore replacing $\delta$ by $\kappa\delta$ replaces $\Theta\to \kappa\Theta$ and multiplies the
intensity by $\kappa^{-s}.$ In other words
re-scaling $\{\Theta_j\}$  by $\kappa$ amounts to multiplying its intensity by $\kappa^{-s}.$
Now the result follows \eqref{ChangeInt}.
\end{proof}
We are now in a position to finish the proof of Theorem \ref{ThMain}.
We shall do that in the case $0<s<1$. In all other cases the proof is similar.

Present the time spent by the walk in $[0,N)$ as
\begin{equation}\label{TN}
T_N=\sum_{n=0}^{N-1} \xi_n=S_1+\ S_2+\ S_3,
\end{equation}
where
$$\begin{aligned}
S_1=&\sum_{n:\,\rho_n<\delta N^{1/s},\,n\,\not\in\,\text{any cluster}} \xi_n \\
S_2=&\sum_{n:\,\rho_n\ge\delta N^{1/s},\,n\text{ is not in a cluster}}\xi_n\\
S_3=&\sum_{n:\,n\text{ is in a cluster}}\xi_n.
\end{aligned}
$$
By Lemma \ref{LmLow}, (a) we have that $\bE(S_1)\le\Const N^{1/s}\delta^{1-s}$. Next by
\eqref{Eq2Peaks}, \eqref{Eq3Peaks} we have that
$$
\bP(S_2>0)\le \bP\left(\brOmega_N^\delta\right)\to0\text{ as } N\to\infty.
$$
We readily have that for $\omega\not\in\brOmega_N^\delta$
$$
\mathfrak{t}_N=N^{-\frac{1}{s}}S_3+N^{-\frac{1}{s}}S_1=N^{-\frac{1}{s}}S_3+R_N,
$$
where $R_N:=N^{-\frac{1}{s}}S_1$ and satisfies the requirements of (a), Theorem \ref{ThMain}.

Next, consider $S_3$ which comes from the sum over the clusters and
is the main contribution to $T_N$. Let us present it as follows:
$$
N^{-\frac{1}{s}}S_3=\sum_{n:\, n\text{ is marked}}N^{-\frac{1}{s}}\sum_{j=0}^M \xi_{n-j}.
$$
In turn
$$
\sum_{j=0}^M \xi_{n-j}=\sum_{j=1}^M
\left(\frac{\xi_{n-j}}{\rho_{n-j}}-\frac{\xi_{n}}{\rho_{n}}\right)\rho_{n-j}+
\frac{\xi_{n}}{\rho_{n}}\sum_{j=0}^M \rho_{n-j}
$$
Next, using Lemma \ref{LmCorShort} and the fact that $\xi_n$ is a geometric
random variable and therefore $\Var_\omega(\xi_n)=\rho_n^2-\rho_n$
one obtains
$$
\left\|\frac{\xi_{n-j}}{\rho_{n-j}}-\frac{\xi_{n}}{\rho_{n}}\right\|\le
\sum_{k=n-j}^{n-1} \left\|\frac{\xi_{k}}{\rho_{k}}-\frac{\xi_{k+1}}{\rho_{k+1}}\right\|
\le \Const \sum_{k=n-j}^{n-1} \frac{1}{\sqrt{\rho_k}}.
$$
Here and below $\|f\|:=\sqrt{\mathbb{E}_{\omega}(|f|^2)}$ with $f$ being a function
on the space of trajectories of the walk.

For $n-j$ belonging to a cluster, that is $(n-j)\in[n-M,n]$  we have that
$\rho_{n-j}\ge c\eps_0^M \rho_n\ge cN^{-\bar{\eps}}\rho_n$. (Remember that
if $\eps_1$ in Lemma \ref{Lm2Clusters} is small enough then $\bar{\eps}$ can be made very small
which is what we shall use in this proof.) Thus
$$
\left\|\sum_{j=1}^M
\left(\frac{\xi_{n-j}}{\rho_{n-j}}-\frac{\xi_{n}}{\rho_{n}}\right)\rho_{n-j}\right\|
\le \Const \frac{N^{\bar{\eps}/2}}{\sqrt{\rho_{n}}}\sum_{j=1}^M\rho_{n-j}
$$
If for $n$ marked we set
$$
\zeta_n=m_n^{-1}\sum_{j=1}^M
\left(\frac{\xi_{n-j}}{\rho_{n-j}}-\frac{\xi_{n}}{\rho_{n}}\right)\rho_{n-j}
$$
then $\|\zeta_n\|\le \Const \frac{N^{\bar{\eps}/2}}{\sqrt{\rho_{n}}}\to0$ as $N\to\infty$
and we have
$$\frac{\sum_{j=0}^M \xi_{n-j}}{N^{1/s}}=\left(\frac{\xi_n}{\rho_n}
+\zeta_n\right) \frac{m_n}{N^{1/s}},$$
Next $\xi_n/\rho_n$ is
asymptotically exponential with mean 1 since $\xi_n$ is geometric
with parameter $1/\rho_n.$ Also by Lemma \ref{LmCorLong}
$\{\frac{\xi_n}{\rho_n}\}_{n \text{ is marked}}$ are asymptotically
independent. On the other hand $\{\frac{m_n}{N^{1/s}}\}_{n\text{ is marked}}$
are asymptotically Poisson by Lemma \ref{LmPVis}. In other words,
$$(\hTheta^{N,\delta}, \hat{\Gamma}^{N,\delta})=
\left(\left\{\frac{m_n}{N^{1/s}},\frac{\xi_n}{\rho_n}+\zeta_n\right\}_{n\text{ is }\delta-
\text{marked}}\right).$$
satisfy the condition of Theorem \ref{ThMain} except that (ii) is replaced by the condition

$(\widetilde{ii})$ $\Gamma^{N, \delta}_j$ converges weakly to a Poisson process on $[\delta, \infty)$
with measure $\mu^\delta.$ Moreover for each $[s,t]\in (0, \infty)$ 
$ \mu^\delta([s,u])\to \mu([s,u])$ as $\delta\to 0$ where
$\mu([s,u])=\int_s^u \frac{\bc}{x^{1+s}} dx$ (the last statement follows from
Lemma \ref{LmPTheta}).

This statement appears slightly weaker than we need but a general argument from real analysis allows
to upgrade $(\widetilde{ii})$ to (ii). 

Namely $(\widetilde{ii})$ shows that for each $\eps$ and $\brdelta$ there is a number
$N(\eps, \delta)$ such that for any collection of disjoint intervals
$[s_1, u_1],\dots, [s_l, u_l]$ in $[\brdelta, \infty)$ with 
$\mu([s_j,u_j])>\eps$ for each $k_1,\dots, k_l$ we
have
\begin{equation}
\label{PTMes}
\left|\bP\left(\Lambda_{N, \brdelta}(s_j, u_j)=k_j\ \hbox{ for all }j\in[1,l] \right)-
\prod_{j=1}^l \frac{\lambda_{j,\brdelta}^{k_j}}{k_j!} e^{-\lambda_{j, \brdelta}}\right|<\eps. 
\end{equation}
where $\Lambda_{N, \brdelta}(s_j, u_j)$ is the number of $\brdelta$-clusters and 
$\lambda_{j, \brdelta}=\mu^\brdelta([s_j, u_j]).$
Choose a sequence $\eps_k$ converging to $0$ (for example, $\eps_k=\frac{1}{k}$ will do)
and let $\delta_k$ be such that for each $\brdelta<\delta_k$ and each $[s, u]\in [\delta, \infty)$
we have
\begin{equation}
\label{EqLimInt}
|\mu^\brdelta([s,u])-\mu([s, u])|<\eps_m. 
\end{equation}
Then for $N>N(\eps_m, \delta_m)$ both 
\eqref{PTMes} and \eqref{EqLimInt} are valid. 
Let $k(N)$ be the largest number such that $N>N(\eps_k, \delta_k).$
Then \eqref{PTMes} and \eqref{EqLimInt} imply that
$$(\Theta^{N,\delta}, \Gamma^{N,\delta})=
\left(\left\{\frac{m_n}{N^{1/s}},\frac{\xi_n}{\rho_n}+\zeta_n\right\}_{n\text{ is }
\delta_{k(N)}-\text{ marked, } m_n>\delta N^{1/s} }\right)$$
satisfies (ii).

\section{Poisson Limit for expected occupation times.}
\label{ScPVis}

To understand the asymptotic properties of the distribution of $a_n$ defined in
\eqref{Defabm} we need the following
\begin{lemma}
\label{Lmmuinf}
(a) For each $m>0$ and $y\ge 1$
\begin{equation}\label{Claster}
\begin{aligned}
 \mu^m(y)&:=\lim_{N\to\infty} N
\bP\left(\frac{\rho_n}{N^{1/s}} \ge\delta y,
\rho_{n+1}< \delta N^{1/s}, \dots ,\rho_{n+m}< \delta N^{1/s}  \right)\\
&=\delta^{-s}c\bE[(D_{0}^{s}y^{-s}-\max_{1\le j\le m} D_{j}^{s})I_{\max_{1\le j\le m} D_{j} < D_{0}y^{-1}}],
\end{aligned}
\end{equation}
where $D_{j}:=p_{n+j}^{-1}\alpha_{n+j+1}...\alpha_{n+m}$ (and by convention
$D_{m}:=p_{n+m}^{-1}$).

\noindent (b) There exists
$\mu^\infty(y)=\lim_{m\to\infty} \mu^m(y).$

\noindent
(c) $\mu^\infty (1)>0. $
\end{lemma}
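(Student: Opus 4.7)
The plan is to reduce the probability in \eqref{Claster} to one about the single heavy-tailed random variable $z_{n+m}$, then handle the $m\to\infty$ limit by monotonicity and a size-biased change of measure.

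For part (a), iterate the recursion \eqref{z2} to obtain $\rho_{n+j}=A_j+D_j\,z_{n+m}$ for $0\le j\le m$, with $A_j$ bounded and measurable with respect to $\{p_k\}_{k\le n+m}$. The event in \eqref{Claster} then reduces to $\{z_{n+m}\in I_N\}$ for an explicit random interval $I_N$ whose normalized endpoints $N^{-1/s}I_N$ converge to $[\delta y/D_0,\,\delta/\max_{1\le j\le m}D_j)$ (empty if $\max_j D_j\ge D_0/y$). Since $z_{n+m}$ is independent of $(D_0,\dots,D_m)$, one conditions on the past environment and applies Lemma \ref{LmRen} to obtain $N\bP(z_{n+m}\in[aN^{1/s},bN^{1/s}))\to c(a^{-s}-b^{-s})$; integrating against the joint law of the $D_j$'s via dominated convergence yields the formula.

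Part (b) is immediate: the events in \eqref{Claster} are nested and decreasing in $m$, so $\mu^m(y)$ is a decreasing sequence of non-negative reals and therefore converges.

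Part (c) is the substantive step. Rewriting the formula with the ratios $R_j:=D_j/D_0=(p_n/p_{n+j})(\alpha_{n+1}\cdots\alpha_{n+j})^{-1}$, which are defined for all $j\ge 1$ independently of $m$, gives $\mu^m(1)/(c\delta^{-s})=\bE[D_0^s(1-\max_{j\le m}R_j^s)I_{\max_{j\le m}R_j<1}]$. Under the original measure, assumption (A) forces $R_j\to\infty$ almost surely, so the indicator vanishes; the formula must be understood through a size-biasing change of measure. Define $\tilde{\bP}$ on $(p_{n+1},\dots,p_{n+m})$ by $d\tilde{\bP}/d\bP=\prod_{k=1}^m\alpha_{n+k}^s$, a probability measure by (B). Convexity of $u\mapsto\log\bE(\alpha^u)$, combined with the two zeros at $u=0$ and $u=s$ and the negative slope at $0$ coming from (A), forces the slope at $u=s$ to be strictly positive, giving $\tilde{\bE}(\log\alpha)=\bE(\alpha^s\log\alpha)>0$. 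Hence $\log\prod_{k=1}^j\alpha_{n+k}$ is a random walk with strictly positive drift under $\tilde{\bP}$, so $R_j\to 0$ almost surely and $\sup_j R_j<\infty$ almost surely. After rewriting
\[\mu^m(1)/(c\delta^{-s})=\tilde{\bE}\!\left[p_n^{-s}(1-\max_{j\le m}R_j^s)I_{\max_{j\le m}R_j<1}\right],\]
monotone convergence in $m$ yields the analogous formula with $\sup_{j\ge 1}$ in place of $\max_{j\le m}$. The main obstacle is then to show $\tilde{\bP}(\sup_j R_j<1)>0$: the LLN under $\tilde{\bP}$ makes $R_j<1$ automatic for $j$ beyond a deterministic threshold with probability close to $1$, and for the finitely many small indices positivity follows from the full support of the tilted distribution together with ellipticity (C).
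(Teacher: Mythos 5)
Your parts (a) and (b) follow the paper's proof essentially verbatim: reduce to the heavy-tailed variable $z_{n+m}$ via the recursion, condition on the $p$'s, apply Lemma \ref{LmRen}, integrate; and then note monotonicity in $m$. The one place the paper is equally terse is the interchange of limit and expectation, where one needs a uniform tail bound $\bP(z>x)\leq C(1+x)^{-s}$ (which does follow from Lemma \ref{LmRen}) and integrability of $D_0^s$ to apply dominated convergence --- you flag this, the paper does not.

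Part (c) is where you take a genuinely different route. The paper proves $\mu^\infty(1)>0$ by contradiction: if $N\bP(n\text{ is marked})\to 0$, then Lemmas \ref{non-cluster} and \ref{in-cluster} force $N\bP(\rho_n\geq\delta N^{1/s})\to 0$, contradicting Lemma \ref{LmRen}. This is short but leans on the cluster machinery built for Section \ref{ScProof}. Your argument is direct and self-contained: write $\mu^m(1)$ with the ratios $R_j=D_j/D_0$, pass to the $s$-tilted measure $\tilde{\bP}$ (using $\bE(\alpha^s)=1$), observe that the tilted drift $\tilde{\bE}(\log\alpha)=\Lambda'(s)>0$ by convexity of $\Lambda$ together with $\Lambda(0)=\Lambda(s)=0$ and $\Lambda'(0)<0$, so $R_j\to 0$ $\tilde{\bP}$-a.s., and then reduce the question to $\tilde{\bP}(\sup_{j\geq 1}R_j<1)>0$. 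This computation in fact identifies the limit as
$ \mu^\infty(1)=c\,\delta^{-s}\,\tilde{\bE}\bigl[p_n^{-s}(1-\sup_{j\geq 1}R_j^s)\,I_{\sup_{j\geq 1}R_j<1}\bigr],$
which is more information than the contradiction argument gives.

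The one place your sketch needs real tightening is the last step, $\tilde{\bP}(\sup_j R_j<1)>0$. The phrase ``$R_j<1$ automatic for $j$ beyond a deterministic threshold'' is not literally correct: the SLLN gives, for each $\eps$, a deterministic $J_\eps$ with $\tilde{\bP}(\sup_{j>J_\eps}R_j<1)\geq 1-\eps$, but not surely. More importantly, the event controlling $j\leq J_\eps$ and the event controlling $j>J_\eps$ both involve $\alpha_{n+1},\dots,\alpha_{n+J_\eps}$, so one cannot simply multiply probabilities. What saves the argument, and should be said, is that the favorable configuration for small $j$ --- all $p_{n+k}$ near $\inf\mathrm{supp}(p)<1/2$, hence all $\alpha_{n+k}$ bounded away from $1$ from above --- simultaneously pushes the tilted random walk $S_J=\sum_{k\leq J}\log\alpha_{n+k}$ above an arbitrary deterministic level as $J$ grows, and one applies the drift bound to the independent increments beyond $J$. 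One also uses that $\bP(p<1/2)>0$, which is not separately listed as an assumption but is forced by the existence of a finite positive $s$ in (B). With these points made explicit, your proof of (c) is correct and arguably cleaner than the paper's; as written it is a valid sketch with a small but genuine gap at the positivity step.
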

\begin{proof}
(a) We shall make use of \eqref{z2} and the relation $\rho_n=p_n^{-1}z_n$.
For a fixed $m$ and $0\le j\le m$ we can write
$$\rho_{n+j}=p_{n+j}^{-1}\alpha_{n+j+1}...\alpha_{n+m}z_{n+m} + \mathcal{O}(1).$$
The inequalities $\rho_n/{N^{1/s}} \ge\delta y$ and $\rho_{n+j}<\delta {N^{1/s}}$
in \eqref{Claster} are equivalent to
$$z_{n+m}\ge {N^{1/s}}\delta(D_{0}^{-1} y+ \mathcal{O}(N^{-1/s}))
\hbox{ and } z_{n+m}< {N^{1/s}}\delta (D_{j}^{-1}+ \mathcal{O}(N^{-1/s}))$$
respectively. Thus
$$\begin{aligned}
&\bP\left(\frac{\rho_n}{N^{1/s}} \ge \delta y, \rho_{n+1}< \delta N^{1/s}, \dots ,
\rho_{n+m}< \delta N^{1/s}\right)=\\
&\bP\left(\delta(D_{0}^{-1} y +\mathcal{O}(N^{-1/s})){N^{1/s}}
\le z_{n+m}<{N^{1/s}}\delta\min_{1\le j\le m} (D_{j}^{-1}
 +\mathcal{O}(N^{-1/s})) \right).
\end{aligned}$$
Since $z_{n+m}$ and $\{p_{n+j}\}_{j\le m}$ are independent, we can compute the
following limit by conditioning on $\{p_{n+j}\}_{j\le m}$ and using Lemma \ref{LmRen}:
$$\begin{aligned}
&\lim_{N\to\infty}N
\bP\left({\rho_n}{N^{-1/s}} \ge y, \rho_{n+1}< \delta N^{1/s}, \dots ,
\rho_{n+m}< \delta N^{1/s}| \{p_{n+j}\}_{j\le m}\right)=\\
&\delta^{-s}c(D_{0}^{s}y^{-s}-\max_{1\le j\le m} D_{j}^{s})I_{\max_{1\le j\le m} D_{j} < D_{0}y^{-1}}.
\end{aligned}$$
To compute the limit \eqref{Claster}, it remains to take the expectation with respect to $\{p_{n+j}\}_{j\le m}$:
$$\begin{aligned}
&\lim_{N\to\infty}N
\bP\left({\rho_n}{N^{-1/s}} \ge y, \rho_{n+1}< \delta N^{1/s}, \dots ,
\rho_{n+m}< \delta N^{1/s}\right)=\\
&\delta^{-s}c\bE[(D_{0}^{s}y^{-s}-\max_{1\le j\le m} D_{j}^{s})I_{\max_{1\le j\le m} D_{j} < D_{0}y^{-1}}].
\end{aligned}$$
This completes the proof of part (a).\newline\smallskip
(b) The probability $\quad
\bP\left(\frac{\rho_n}{N^{1/s}} \in [c,d], \rho_{n+1}\leq \delta N^{1/s}, \dots ,
\rho_{n+m}\leq \delta N^{1/s} \right)$
is a monotonically decaying function of $m$. Hence the proof.

\smallskip\noindent
(c) If $\mu^\infty(1)=0$ then $N \bP(n \text{ is marked})\to 0$ as $N\to\infty.$
Then
$$
\begin{aligned}
\bP(\rho_n\geq \delta N^{1/s})&\le \bP(\rho_n\geq \delta N^{1/s} \hbox{ and $n$ is not in a cluster})\\
&+\bP(\rho_n\geq \delta N^{1/s} \hbox{ and $n$ is in a cluster})\\
& \le \Const \frac{\beta^M}{N}+ \Const\bP(n \hbox{ is marked}),
\end{aligned}
$$
where the estimates for
the first and second term are provided by Lemmas \ref{non-cluster}
and \ref{in-cluster} respectively.
But then $N\bP(\rho_n\geq \delta N^{1/s})\to0$ as $N\to\infty$
contradicting Lemma \ref{LmRen}. This proves (c).
\end{proof}
Lemma \ref{Lmmuinf} gives the limiting distribution of $\ta$ in Lemma \ref{LmPVis}.
Namely, $\bP(\ta>y)=1$ if $y\le1$ and for $y>1$ we have
$$
\bP(\ta>y)=\lim_{N\to\infty}\bP(\rho_n>N^{1/s}\delta y\, |\, n\hbox{ is marked})=
\mu^{\infty}(y)/\mu^{\infty}(1).
$$
Next we address the distribution of $\tb_j.$
\begin{lemma}
The distribution of $\frac{\sum_{j=0}^M \rho_{n-j}}{\rho_n}$ conditioned on
$\rho_n\geq \delta N^{1/s}$ converges as $N\to\infty$ to the distribution
of
$$
1+p_{-1}^{-1}q_0+ p_{-2}^{-1}q_0\alpha_{-1}+\dots+p_{-k}^{-1}q_0\alpha_{-1}\dots \alpha_{-k+1}+...
$$
\end{lemma}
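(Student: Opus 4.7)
The plan is to expand $\rho_{n-k}$ via \eqref{SManySteps} and split off the part that depends only on the left half of the environment. One writes
\begin{equation*}
\frac{\rho_{n-k}}{\rho_n} \;=\; p_{n-k}^{-1}\,q_n\,\alpha_{n-1}\alpha_{n-2}\cdots\alpha_{n-k+1} \;+\; \frac{p_{n-k}^{-1}\bigl(1+\alpha_{n-k+1}+\cdots+\alpha_{n-k+1}\cdots\alpha_{n-1}\bigr)}{\rho_n},
\end{equation*}
where the first summand is the ``main'' term and the second is an error to be discarded on the conditioning event.

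To control the error, Assumption (C) gives $\alpha_j\le \brc$ and $p_j\ge\eps_0$, so its numerator is bounded by $k\,\brc^{k-1}\eps_0^{-1}$, which for $k\le M=\ln\ln N$ is $O((\ln N)^C)$ for a constant $C=C(\eps_0)$. Dividing by $\rho_n\ge\delta N^{1/s}$ and summing over $k\le M$ gives a total error of order $O((\ln N)^{C+1}/N^{1/s})\to 0$. For the main term, recall that $\rho_n=p_n^{-1}z_n$ with $z_n=1+\alpha_{n+1}z_{n+1}$ a function of the environment strictly to the right of $n$; hence $\rho_n$ is independent of $\{p_{n-k},\alpha_{n-j}\}_{k,j\ge 1}$, and these variables preserve their base distributions under the conditioning. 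The only coupled variable is $p_n$ itself: since $p_n$ and $z_n$ are independent and $\bP(z_n>x)\sim cx^{-s}$ by Lemma~\ref{LmRen}, Bayes' rule yields
\[
\bP(p_n\in dy \mid \rho_n\ge \delta N^{1/s}) \;\xrightarrow[N\to\infty]{}\; \frac{y^{-s}\,dF(y)}{\bE(p_n^{-s})},
\]
where $F$ is the marginal distribution of $p_n$. Thus $q_n=1-p_n$ converges in law to $q_0=1-p_0$ with $p_0$ having this size-biased law, jointly independent of the left-side environment $\{p_{-k},\alpha_{-j}\}_{k,j\ge 1}$.

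Relabeling $n-k\mapsto -k$ and assembling these pieces, the conditional distribution of the truncated sum $\sum_{j=0}^M \rho_{n-j}/\rho_n$ converges, as $N\to\infty$ with $M=M_N\to\infty$, to the law of the infinite series in the statement. Convergence of the tail $\sum_{k>M}$ to $0$ follows from the fact that, by convexity and Assumption (B), $\bE(\alpha^h)<1$ for every $0<h<s$, so the tail is $o(1)$ in $L^h$. The main subtlety is the joint passage in $N$ and $M=\ln\ln N$: the error bound above already forces $M$ to grow only polylogarithmically, while $M\to\infty$ is what lets the finite sum capture the full infinite series; verifying that $M=\ln\ln N$ sits strictly between these two thresholds is the key technical step, and it is precisely why the cluster window $M=\ln\ln N$ was chosen in the first place.
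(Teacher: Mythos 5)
Your proof is correct and follows the same decomposition as the paper: expand $\rho_{n-k}/\rho_n$ via \eqref{SManySteps} into a main term plus a remainder, bound the remainder deterministically by a quantity of order $\brc^M/\rho_n \ll N^{-1/s+o(1)}$ on the conditioning event, and pass to the limit in $M$ using stationarity and geometric decay of $\bE(\alpha^h)$. You do, however, address a genuine subtlety that the paper's two-line proof glosses over: the conditioning event $\{\rho_n\ge\delta N^{1/s}\}$ is \emph{not} independent of $p_n$ (only of $p_{n-1},p_{n-2},\dots$), and since $\rho_n=p_n^{-1}z_n$ with $p_n\perp z_n$ and $\bP(z_n>x)\sim c x^{-s}$, a Bayes computation shows that in the limit $p_n$ acquires the size-biased law $\propto p^{-s}\,dF(p)$ rather than the environment law $F$. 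This means that the $q_0$ appearing in the limiting series is not a plain environment variable but is size-biased; the paper's proof simply asserts that the distribution of the truncated sum converges to that of the infinite series without remarking on this, and the lemma statement as written is correspondingly imprecise. Your observation is correct and sharpens the lemma. One terminological quibble: the constraint on the cluster width is not ``polylogarithmic'' but $M\le\eps_1\ln N$ for suitable $\eps_1$ (so that $\brc^M\ll N^{1/s}$, matching the choice of $\eps_1$ in Lemma~\ref{Lm2Clusters}); the choice $M=\ln\ln N$ is well inside this range, which is all that is needed.
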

\begin{proof} According to \eqref{SManySteps}
$$ \rho_{n-j}=p_{n-j}^{-1}q_n\alpha_{n-1}\dots \alpha_{n-j+1}\rho_n+\cO(K^M).$$
Since $K^M\ll N^{1/s}$ we see that
$$ \frac{\sum_{j=0}^M \rho_{n-j}}{\rho_n}=1+p_{n-1}^{-1}q_n+ p_{n-2}^{-1}q_n\alpha_{n-1}+\dots+p_{n-M}^{-1}q_n\alpha_{n-1}\dots \alpha_{n-M+1}+o(1).$$
As $N\to\infty$, also $M=M_N\to\infty$ and so the limiting distribution of the above expression is the same as the distribution of
\begin{equation}
\label{StatDist}
1+p_{-1}^{-1}q_0+ p_{-2}^{-1}q_0\alpha_{-1}+\dots+p_{-k}^{-1}q_0\alpha_{-1}\dots \alpha_{-k+1}+...
\end{equation}
\end{proof}
Next take $\eps_5<\eps_4<\eps_2$ where $\eps_2$ is from Lemma \ref{Lm2Clusters}(b).
Divide all possible values of $a_n$ into intervals $I_1, I_2, \dots, I_{d_1}.$ Divide $[0, N]$ into
a union of long intervals $L_j$ of length  $N^{\eps_4}$ and short intervals of  length $N^{\eps_5}.$
(Intervals are numbered in decreasing order). Then by Lemma \ref{LmRen} the total number of clusters
originated in short intervals tends to $0$ in probability. Observe that by Lemmas \ref{Lm2Clusters} and \ref{Lmmuinf}
$$
\begin{aligned}&\bP\left(n \text{ is marked, } a_n\in I_m \text{ and }
\rho_{n-k}\leq \delta N^{1/s}, k=M\dots N^{\eps_4}\right)\\
&\sim \frac{\mu_\infty(I_m)}{N} \left(1-\cO\left(\beta^M\right)\right).
\end{aligned}
$$
Recall that $b_n$ is independent of $a_n.$ Hence if we divide
$[1,\infty)\times [1,\infty)$ into rectangles $J_1, J_2\dots J_{d_1}$  then
$$
\begin{aligned}&\bP\left(n \text{ is marked, } (a_n, b_n) \in J_m \text{ and }
\rho_{n-k}\leq \delta N^{1/s}, k=M\dots N^{\eps_4}\right)\\
&\sim \frac{\tmu_\infty(J_m)}{N} \left(1-\cO\left(\beta^M\right)\right)
\end{aligned}
$$
where $\tmu_\infty$ is a product of $\mu_\infty$ and the distribution function of $\tb_n.$

Let $V_j$ be the vector whose $m$-th component is
$$\Card(n\in L_j: n-marked, (a_n, b_n)\in J_m). $$
Then
$$ \bP(V_j=e_m)\sim \tmu(J_m) N^{\eps_4-1}, \quad
\bP(|V_j|\geq 1)=o(N^{\eps_4-1})$$
so
\begin{equation}
\label{1Int}
\bE(\exp(i\langle v, V_j\rangle)=
1+N^{\eps_4-1} \sum_m \tmu(J_m) (e^{iv_m}-1)+o(N^{\eps_4-1}).
\end{equation}
Next, let $W_j=\sum_{k=1}^j V_k.$ We claim that
\begin{equation}
\label{kInt}
\ln \bE(\exp(i\langle v, W_j\rangle)=j N^{\eps_4-1} \sum_m \tmu(J_m) (e^{iv_m}-1)+o(j N^{\eps_4-1}).
\end{equation}
This holds because $V_j$ is almost independent of $V_1, V_2\dots V_{j-1}.$
Namely, by Lemma \ref{LmCouple}
the value of $\rho_n$ at the left endpoint of $L_j$ could influence $V_j$
only if $\rho_{n-k}$ is $\beta_1^{N^{\eps_5}}$-close to the boundary of $I_m.$ However if $N$
is large then the probability that there is $n-k\in L_j$ such $\rho_{n-k}$ is close to the boundary
of $I_m$ is $o(N^{\eps_4-1})$ and hence arguing as in the proof of \eqref{1Int} we obtain \eqref{kInt}.
Taking $j\sim N^{1-\eps_4}$ we obtain Lemma \ref{LmPVis}.

\section{Case $s=2$: proof of Theorem \ref{ThQLT}}
\label{ScS2} To prove Theorem \ref{ThQLT} we follow the approach
used in \cite{CD}.

We split
$$ \sum_{n=1}^N (\xi_n-\rho_n)=S_L+S_M+S_H $$
where
$S_H$ corresponds to the high values of $\rho_n,$ namely, $\rho_n>\sqrt{N} \ln^{100} N,$
$S_M$ corresponds to the moderate values of $\rho_n,$ namely,
$\frac{\sqrt{N}}{\ln^{100} N}\leq  \rho_n\leq \sqrt{N} \ln^{100} N$ and
$S_L$ corresponds to the low values of $\rho_n,$ namely, $\rho_n<\frac{\sqrt{N}}{\ln^{100} N}.$
We begin by showing that high and moderate values of $\rho_n$ can be ignored.
First, by Lemma \ref{LmRen}
$$\Prob(S_H\neq 0)\leq N \Prob(\rho_n>\sqrt{N}\ln^{100} N)\leq \frac{C}{\ln^{200} N}. $$
Second, arguing as in the proof of Lemma \ref{LmLow}(b) we see that
$$\bE(S_M^2)\leq \Const \sum
\bE\left(\left(\rho_n\right)^2I_{\sqrt{N}/\ln^{100} N<\rho_n<\sqrt{N}\ln^{100} N} \right)\leq
\Const N \ln\ln N$$
and hence $S_M/\sqrt{N\ln N}$ converges to $0$ in probability.

Therefore the main contribution comes from $S_L.$ To handle it use Bernstein's method.
Divide the interval $[0, N]$ into blocks of length
$L_N=\ln^{10} N$ and $l_N=\ln^2 N$ following each other. More precisely
the $j$-th big block is
$$ I_j=[j(L_N+l_N), (j+1)L_N+j l_N-1]$$
and $j$-th small block is
$$ J_j=[(j+1)L_N+j l_N, (j+1)(L_N+l_N)-1].$$
Accordingly, we split $S_L=S_L^{big}+S_L^{small},$ where
$S_L^{big}$ ($S_L^{small}$) is the contribution to $S_L$ coming from big (small) blocks.
Arguing as in the proof of Lemma \ref{LmLow}(b)
we see that
$$ \bE(\Var_\omega(S_L^{small}))\leq
 C\sum_n \left[\bE((\rho_n)^2)+\bE(\rho_n l_N)\right] \text{ (summation is over the small blocks)}$$
$$ \leq C\left(N \ln N \frac{l_N}{L_N}+ N \frac{l_N^2}{L_N}\right) $$
and hence the main contribution comes from the big blocks.

Next we modify $\xi_n$ as follows. If $n\in I_j$ let $\txi_n$ be the  number of visits to
the site $n$ before our walk reaches $I_{j+1}.$ Let $\trho_n=\EXP_\omega(\txi_n).$
Observe that $\txi_n$ corresponds to imposing absorbing boundary conditions at
the beginning of $I_{j+1}$ so
$\trho_n=p_n^{-1} q_{n+1} \trho_n+p_n^{-1}$
with absorbing boundary
condition at $\brn:=(L_N+l_N)(j+1).$ Hence
$$ \rho_n-\trho_n=\frac{q_\brn}{q_n} \alpha_n\dots \alpha_{\brn-1} \rho_\brn $$
and so
$$ \left|\sum_n \left[\trho_n-\rho_n\right]\right|<1 $$
except for the set of probability tending to $0$ as $N\to\infty.$ Also
$$ \bP(\txi_n=\xi_n \text{ for } n=0\dots N)\to 1 \text{ as }N\to\infty.$$
Let
$$\tS=\sum_n (\txi_n-\trho_n) $$
where the sum is over big blocks.
By the foregoing discussion it is enough to show that
\begin{equation}
\label{QNorm}
\text{with $\bP$ probability close to 1
the quenched distribution of $\tS$ is close to normal.}
\end{equation}
We claim that the following limit
exists (in probability)
\begin{equation}
\label{LowDif}
\lim_{N\to\infty} \frac{\Var_\omega(\tS)}{N\ln N}=D_2.
\end{equation}
Before proving \eqref{LowDif} let us show ho to complete the proof of \eqref{QNorm}. Let
$$\tS_j=\sum_{n\in I_j} (\txi_n-\trho_n) $$
be the contribution of the $j$-th block.
Since summation is taken over $n$ with $\rho_n<\sqrt{N}/\ln^{100} N$ and $\txi_n$ has geometric distribution
we have for $k\in \naturals$
\begin{equation}
\label{PreLin}
\Prob_\omega\left(\tS_j>\frac{\sqrt{N} L_N k}{\ln^{100} N}\right)\leq C e^{-k} L_N.
\end{equation}
Indeed $\tS_j>\frac{\sqrt{N} L_N k}{\ln^{100} N}$ implies that $\txi_n>\frac{\sqrt{N} k}{\ln^{100} N}$
for some $n$ in the block. \eqref{LowDif} and \eqref{PreLin} show that $\sum_j \tS_j$ satisfies the Lindenberg condition.
It remains to establish
\eqref{LowDif}. To this end we prove two facts.

$$(A)\quad \forall\eps>0 \exists M: \bP\left(
\frac{\sum_{n_1<n_2-M} \Cov_\omega(\txi_{n_1}, \txi_{n_1})}{N\ln N}>\eps\right)<\eps \quad \text{ and }$$
$$(B)\quad \forall k\quad
\frac{\sum_n \Cov_\omega(\txi_{n}, \txi_{n-k})}{N\ln N}\Rightarrow \frac{\bE(\alpha)^k c^*}{2}
\text{ in probability} $$
where $c^*$ is the constant from Lemma \ref{LmRen}.

The remaining part of Section \ref{ScS2} is devoted to the proofs of statements (A) and (B). We will drop
tildes in $\txi$ and $\trho$ in order to simplify notation.

To obtain (A) we  show that
\begin{equation}
\label{Covs>1}
\bE(\left|\Cov_\omega\left(\xi_{n-k}, \xi_{n}\right)\right|\left|\cF_{n}\right.)\leq C \theta^k (\rho_{n})^2
\end{equation}
for some $\theta<1.$ Pick a small $\epsilon>0$ and consider two cases

(I) $\rho_{n}>(1+\epsilon)^k.$ Then we use that
$$ |\Cov_\omega(\xi_{n-k}, \xi_{n})|\leq \sqrt{\Var_\omega(\xi_{n-k})\Var_\omega(\xi_{n})}
\leq C \rho_{n_1} \rho_{n_2}$$
and that
$$\bE(\rho_{n-k}|\cF_n)\leq \bE(\alpha)^k \rho_n+C.$$
(II) $\rho_{n}\leq (1+\epsilon)^k.$ Then by \eqref{Cor1MC}
$$ |\Cov_\omega(\xi_{n-k}, \xi_{n})|\leq C \Var_\omega(\xi_{n}) \rho_{n-k} q^*$$
where $q^*$ is the  probability to visit $n-k$ before $n$ starting from $n-1.$ Hence
$$ \bE(|\Cov_\omega(\xi_{n-k}, \xi_{n})||\cF_n)\leq C \rho_n
\sqrt{\bE((\rho_{n-k})^2|\cF_n) \bE(q^*|\cF_n)} $$
We have
$$\bE(\rho_{n-k})^2|\cF_n)\leq \rho_n+Ck$$
since $s=2$ whereas $\bE(q^*|\cF_n)\leq C \theta^k $ by Lemma \ref{LmBack}.

Summing \eqref{Covs>1} over $k$ we obtain (A).

To prove (B) observe that by Lemma \ref{LmCorShort} for fixed $k$ we have
$$\Cov_\omega(\xi_{n-k}, \xi_n)=\rho_{n-k} \rho_{n}+\cO\left(\rho_n \right)$$
where the implicit constant depends on $k.$ Since
$\bE(\rho_{n-k}|\cF_n)=\rho_n \bE(\alpha)^k+C$ we get
$$\bE(\Cov_\omega(\xi_{n-k}, \xi_n))=
\bE((\rho_{n})^2) \bE(\alpha)^k+\cO\left(\bE\left(\rho_n\right)\right).$$
Let $Z_n=\Cov(\txi_n, \txi_{n-k}).$
Next
$$\Var\left(\sum_n Z_n\right)=\sum \Var(Z_n)+2\sum_{n_1<n_2} \Cov(Z_{n_1}, Z_{n_2}).$$
Observe that $Z_{n_1}$ and $Z_{n_2}$ are independent if $n_1, n_2$ belong to different
blocks and so we can limit summation over $n_1, n_2$ in the same block.
Since
$$Z_n=\rho_n^2 \alpha_{n-k}\dots \alpha_{n-1}+\cO(\rho_n)$$
Lemma \ref{LmRen} gives
$$ \Var(Z_n)\leq \Const \frac{N}{\ln^{200} N}.$$
By Cauchy-Schwartz inequality
$$ \Cov(Z_{n_1}, Z_{n_2}) \leq \Const \frac{N}{\ln^{200} N}.$$
Therefore
$$ \Var(\sum_n Z_n)\leq \Const \frac{N L_N^2}{\ln^{100} N}.$$
This completes the proof of (B).

\begin{remark}
The same argument allows one to handle the case $s>2.$ Actually this case is simper
since there is no need to introduce cutoffs. We do not provide the details here since the case $s>2$
had been studied in detail in \cite{G2, P1}\footnote{\cite{G2} considers a class
of environments which is much larger than the one treated in our paper.}, where stronger almost sure quenched
limit theorems were obtained (as has already been mentioned in the Introduction). However, such almost sure statement can not be extended to $s=2$ for two (related) reasons. First
the quenched variance of $\rho_n$ is not integrable and so \eqref{LowDif} can not be upgraded to almost
sure convergence \cite{Aa}. Secondly even though the contribution of the site with largest $\rho_n$
is much smaller than the contribution of the remaining sites with probability close to 1, still
$\bP(\max_n \rho_n>\sqrt{N}{\ln^{100} N})$ decays quite slowly (as $\ln^{-200} N$) and so from time to time
we will see the situation where the site with largest $\rho_n$ can not be ignored: the distribution of the sequence $T_N$ would alternate between that of the normal and exponential variables. So our method does not recover the results of \cite{G2, P1} but it allows us to reprove Theorem \ref{ThAnn} for all values of $s$ (as in \cite{KKS}).
\end{remark}


\section{Maximum occupation time.}
\label{ScMOT}
Here we prove Theorem \ref{ThRec}. Consider the following process
$\hLambda^\delta_N=\{(\frac{n_j}{N}, \frac{\hm_j}{N^{1/s}})\}$ where
$n_j$ are marked points and $\hm_j$ is the maximum of $\rho_n$ inside the $j$-th cluster.
Similarly to Lemmas \ref{LmPVis} and \ref{LmPTheta} we prove the following statement.

\begin{lemma}
(a) As $N\to\infty$ $\hLambda^\delta_N$ converges to a Poisson process
$\hLambda^\delta=\{t_j, \bm_j)\}$
on $[0,1]\times [\delta, \infty).$

(b) As $\delta\to 0$ $\hLambda^\delta$ converges to the Poisson process $\hLambda.$

(c) There exists a constant $\hc$ such that $\hLambda$ has the intensity
$\frac{\hc}{\bm^{1+s}}.$
\end{lemma}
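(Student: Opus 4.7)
The strategy is to reproduce the proofs of Lemmas \ref{LmPVis} and \ref{LmPTheta} almost verbatim, with the cluster mass $m_n=\sum_{j=0}^M\rho_{n-j}$ replaced by the cluster maximum $\hat m_n=\max_{0\le j\le M}\rho_{n-j}$. For each marked site $n$, set
$$a_n=\frac{\rho_n}{\delta N^{1/s}},\qquad \hat b_n=\frac{\max_{0\le j\le M}\rho_{n-j}}{\rho_n},\qquad \hat m_n=\delta N^{1/s}\,a_n\,\hat b_n.$$
Since $n$ is marked, $a_n\ge 1$ and $\hat b_n\ge 1$, so indeed $\hat m_n\ge \delta N^{1/s}$. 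The law of $a_n$ conditioned on $n$ being marked is unchanged, so Lemma \ref{Lmmuinf} supplies its limit. To identify the limit of $\hat b_n$, I use \eqref{SManySteps} in the form
$$\rho_{n-j}=p_{n-j}^{-1}q_n\alpha_{n-1}\cdots\alpha_{n-j+1}\rho_n+\cO(\eps_0^{-j}),$$
and on the event $\{\rho_n\ge\delta N^{1/s}\}$ the remainder is $o(\rho_n)$ uniformly in $j\le M$ since $\eps_0^{-M}\ll N^{1/s}$. Hence
$$\hat b_n=\max\Bigl(1,\,\max_{1\le j\le M}p_{n-j}^{-1}q_n\alpha_{n-1}\cdots\alpha_{n-j+1}\Bigr)+o(1).$$
As $\{p_k,\alpha_k:k\le n\}$ is independent of $z_{n+1},\dots$, the limiting distribution of $\hat b_n$ is that of
$$\hat b=\max\Bigl(1,\,\max_{k\ge 1}p_{-k}^{-1}q_0\alpha_{-1}\cdots\alpha_{-k+1}\Bigr),$$
which is a.s.\ finite because $\bE(\ln\alpha)<0$ forces the products to decay geometrically; moreover this limit is independent of the limit of $a_n$.

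With these ingredients, part (a) follows by applying the big-block/short-block scheme at the end of Section \ref{ScPVis}: divide $[0,N]$ into long intervals of length $N^{\eps_4}$ separated by short intervals of length $N^{\eps_5}$, partition the region $[1,\infty)\times[1,\infty)$ of $(a_n,\hat b_n)$-values into product rectangles $\hat J_m$, and combine Lemmas \ref{Lm2Clusters} and \ref{Lmmuinf} with the approximate independence supplied by Lemma \ref{LmCouple} to conclude
$$\bP\bigl(n\text{ is marked, } (a_n,\hat b_n)\in\hat J_m, \rho_{n-k}<\delta N^{1/s} \text{ for } k=M,\dots,N^{\eps_4}\bigr)\sim \hat\mu_\infty(\hat J_m)/N,$$
where $\hat\mu_\infty$ is the product of the law of the limit of $a_n$ and the law of $\hat b$. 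The Poisson limit with intensity $\hat\mu_\infty$ (times Lebesgue measure in the time coordinate $t=n/N$) then follows by the same characteristic function computation as in Lemma \ref{LmPVis}. Pushing forward via the map $(t,a,\hat b)\mapsto(t,\delta a\hat b)$ and applying Lemma \ref{LmPT}(a) yields the desired Poisson process on $[0,1]\times[\delta,\infty)$.

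For part (b), I mirror the discussion following Lemma \ref{LmPVis}: if $\tilde\delta<\delta$, then the maximum $\hat m_j$ of a $\delta$-cluster coincides with the maximum of the $\tilde\delta$-cluster containing it, \emph{unless} the latter is longer and accidentally contains another site with $\rho_\cdot\ge\delta N^{1/s}$, an event forcing $\omega\in\brOmega_N^{\tilde\delta}$ whose probability is bounded by \eqref{Eq3Peaks} and tends to $0$. Additional $\tilde\delta$-clusters not containing a $\delta$-cluster have $\hat m<\delta N^{1/s}$ and so are invisible in the region $[0,1]\times[\delta,\infty)$. This establishes consistency of the family $\{\hat\Lambda^\delta\}$ and the existence of $\hat\Lambda$ as a Poisson process on $[0,1]\times(0,\infty)$. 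For part (c), I use the scaling argument of Lemma \ref{LmPTheta}: the intensity of marked points in $\hat\Lambda^\delta$ is proportional to $\delta^{-s}$, while the law of $\hat m_j/\delta=a_j\hat b_j$ is $\delta$-independent. Replacing $\delta$ by $\kappa\delta$ therefore maps $\hat m\mapsto\kappa\hat m$ and multiplies the intensity by $\kappa^{-s}$, so by the change-of-variable formula \eqref{ChangeInt} the intensity of $\hat\Lambda$ must take the form $\hat c/\bar m^{1+s}$; positivity of $\hat c$ follows from $\mu^\infty(1)>0$ (Lemma \ref{Lmmuinf}(c)).

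The main technical point is the identification of the limiting law of $\hat b_n$ in part (a): unlike the sum $b_n$, the maximum depends on the entire tail $\alpha_{n-1},\dots,\alpha_{n-M+1}$ through a non-linear functional, so one must truncate at level $M$, pass to the limit, and then let $M\to\infty$ using the a.s.\ finiteness of $\hat b$. Once that is settled, the Poisson convergence, the consistency in $\delta$, and the intensity formula are all just bookkeeping modifications of the arguments already carried out for Lemmas \ref{LmPVis} and \ref{LmPTheta}.
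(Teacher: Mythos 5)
Your proposal is correct and follows exactly the route the paper intends: the paper simply states that this lemma is proved ``similarly to Lemmas~\ref{LmPVis} and \ref{LmPTheta},'' and you have filled in those modifications explicitly by replacing the cluster mass $m_n=\sum_{j=0}^M\rho_{n-j}$ with the cluster maximum $\hat m_n=\max_{0\le j\le M}\rho_{n-j}$, deriving the limiting law of $\hat b_n$ from \eqref{SManySteps} in the same way the paper derives that of $b_n$, transferring the big-block/short-block Poisson-limit argument verbatim, handling consistency in $\delta$ via the same $\brOmega_N^{\tilde\delta}$ event, and obtaining the intensity by the scaling argument of Lemma~\ref{LmPTheta}. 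This is precisely the ``bookkeeping modification'' the paper has in mind.
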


Next we show that the low values of $\rho$ are unlikely to contribute to the maximal occupation times.
Fix $\theta>0.$ Denote
$$\Omega_{N,k}=\{\exists n\leq N: N^{1/s}2^{-(k+1)}<\rho_n\leq N^{1/s}2^{-k}
\text{ and }
\xi_n>\theta N^{1/s}\}$$
and set
$$
\Phi_{N,k,n}=\{ N^{1/s}2^{-(k+1)}<\rho_n\leq N^{1/s}2^{-k}\}.
$$
Then by Lemma \ref{LmRen}
$$ \bP(\Omega_{N,k})\le N
\bP\left(\Phi_{N,k,n} \right)
\bP\left(\xi_n>\theta N^{1/s}| \Phi_{N,k,n}\right)\le \Const 2^{ks}
\bP\left(\xi_n>\theta N^{1/s}| \Phi_{N,k,n}\right)$$
Since $\xi_n$ has a geometric distribution with parameter $\rho_n^{-1}$ we have that
$$
\bP\left(\xi_n>\theta N^{1/s}| \Phi_{N,k,n}\right)\le (1-\rho_n^{-1})^{\theta N^{1/s}}\le
\Const e^{-c2^k}.
$$
The first term here is $\cO(2^{-ks})$ in view of Lemma \ref{LmRen} and Markov inequality and the
second term is less than
$$ 4^{sk} \bP\left(\xi_n>\theta N^{1/s}|\rho_n \leq \frac{N^{1/s}}{2^k}\right)
\leq \Const 4^{sk} \beta^{2^k \theta}, \quad \beta<1 $$
since $\xi_n$ has geometric distribution with mean $\rho_n.$
Summing these bounds over $k\geq \log_2(1/\delta)$
we see that the points from outside of the clusters
can be ignored. The rest of the proof of Theorem \ref{ThRec}
is similar to the proof of Theorem
\ref{ThMain}. Namely Lemma \ref{LmCorShort} implies that the maximum occupation
time inside the
$j$-th cluster occurs at the site $\hn_j$ such that $\rho_{\hn_j}=\hm_j.$ This shows that
if $\delta$ is sufficiently small then
with probability close to 1
$\xi_N^{*}=\max_j \hm_j \frac{\xi_{\hn_j}}{\hm_j}$ where the maximum is taken
over the $\delta$-clusters.
For large $N$ the $\frac{\xi_{\hn_j}}{\hm_j}$
is asymptotically exponential with mean 1. Therefore letting
$N\to\infty$ and $\delta_N\to 0$ we obtain that the distribution of
$\frac{\xi^*_N}{N^{1/s}}$
is asymptotically the same as that of
$$ \max_j \htheta_j \Gamma_j $$
where $\hLambda=\{(t_j, \htheta_j)\}$ and $\Gamma_j$ are i.i.d random variables independent of
$\hLambda$ and having mean 1 exponential distribution. It remains to notice that by Lemma~\ref{LmPT}
$\{\htheta_j \Gamma_j\}$ also form a Poisson process.

\appendix
\section{Annealed distribution.}
\label{AppAnn}
Here we show how our results allow to recover the known facts about the annealed distribution.
\subsection{Proof of Theorem \ref{ThAnn}.}
If $0<s<1$ then our result follows from Theorem \ref{ThQLT}(a),
Lemma \ref{LmPT}(c) and Lemma \ref{LmPSt}(a).

If $1<s<2$ let
$$ Y_\delta'=\sum_{\Theta_i>\delta} \Theta_i \Gamma_i, \quad
Y_\delta''=\sum_{\Theta_i>\delta} \Theta_i ,
\quad Y_\delta=Y_\delta'-Y_\delta''. $$
Observe that $\bE(Y_\delta')=\bE(Y_\delta'').$
By Theorem \ref{ThQLT}(b) $T_N-\bE(T_N)$ is asymptotically distributed as
$$ Y_\delta+(Y_\delta''-\bE Y_\delta'')=(Y_\delta'-\bE Y_\delta').$$
Therefore the result follows by
Lemma \ref{LmPT}(c) and Lemma \ref{LmPSt}(b).

The proofs in cases $s=1,2$ are similar.

\begin{remark}
In this paper we restrict our attention to the case $s\leq 2.$ We refer the reader to
\cite{G1} for the analysis of case $s>2.$
\end{remark}

\subsection{Proof of Corollary \ref{CrNoLimit}.}
Fix a metric $d$ on the space of distributions of the line.
For example, one can take
$$ d(F_1, F_2)=\inf\{\eps: F_2(x-\eps)-\eps<F_1(x)<F_2(x+\eps)+\eps\}. $$
To prove Corollary \ref{CrNoLimit} it suffices to show that given $a_1, a_2\dots a_l$
and $\eps>0$ the event
$$ d(F_N^\omega, F_{\sum_{j=1}^l a_j \Gamma_j})<\eps $$
occurs infinitely often where $\Gamma_1\dots \Gamma_l$ are i.i.d.
random variables having exponential distribution with parameter 1.

Given an interval $I=[n_1, n_2]$ let $T_I$ be the total time the walker
spends inside $I$ before $\tilde{T}_{n_2},$ the hitting time of $n_2$.
Note that the quenched distribution functions
$F_{T_{I_1}}^\omega \dots F_{T_{I_k}}^\omega$ of $T_{I_j}$ are independent if
the intervals $I_1, I_2\dots I_k$ have disjoint interiors.
On the other hand we can choose a sequence $N_m$ growing so fast
that
\begin{equation}
\label{ForgetPast}
\bP\left(d(F_{N_m}^\omega, F_{T_{N_{m-1}, N_m}/N_m^{1/s}})<\frac{\eps}{4}\right)<\frac{1}{m^{10}}.
\end{equation}
Accordingly in view of Borel-Cantelli Lemma it is enough to show that the event
\begin{equation}
\label{ComeNear}
 d(F_{\sum_{j=1}^l a_j \Gamma_j}, F_{T_{N_{m-1}, N_m}/N_m^{1/s}})<\frac{\eps}{2} \quad
\text{occurs infinitely often.}
\end{equation}
By Corollary \ref{CrQLT} there exists $c=c(\eps)>0$ such that
$$ \bP\left(d(F_{N_m}^\omega, F_{\sum_{j=1}^l a_j \Gamma_j})<\frac{\eps}{4}\right)>c. $$
Now \eqref{ForgetPast} implies that for large $m$ we have
$$\bP(\left( d(F_{\sum_{j=1}^l a_j \Gamma_j}, F_{T_{N_{m-1}, N_m}/N_m^{1/s}})<\frac{\eps}{2} \right)>\frac{c}{2}$$
and hence \eqref{ComeNear} follows from Borel-Cantelli Lemma.

\subsection{Proof of Corollary \ref{CrRec}.}
Let $N_k$ be a sequence growing so fast that
\begin{equation}
\label{EqFastGrowth}
\sum_k \bP(X_n \text{ visits } [0, N_k] \text{ after } \tilde{T}_{N_{k+1}})<\infty.
\end{equation}
Let $\tX_n$ be a modified walk obtained from $X_n$ by erasing visits to $[0, N_k]$ after $\tilde{T}_{N_{k+1}}.$
By \eqref{EqFastGrowth} $\tX_n$  is a finite modification of $X_n.$ Let $\txi_n$ and $\txi_N^*$
be defined similarly to $\xi_n$ and $\xi_N^*$ but with $X_n$ replaced by $\tX_n.$ Observe that
if $N_k$ grows sufficiently fast then by Theorems \ref{ThAnn} and \ref{ThRec} there exist constants
$c_1$ and $c_2$ such that
$$ \bP\left(\frac{\txi^*_{N_k}}{\tilde{T}_{N_k}}>c_1\right)>c_2. $$
Therefore Corollary \ref{CrRec} follows by Borel-Cantelli Lemma.


\begin{thebibliography}{99}
\bibitem{Aa}  Aaronson J. {\it An introduction to infinite ergodic theory,}
Math. Surv. \& Monographs {\bf 50} (1997) AMS Providence, RI.

\bibitem{BG} Bolthausen E., Goldsheid I.
{\it Recurrence and transience of random walks in random environments on a strip,}
Comm. Math. Phys.  {\bf 214}  (2000) 429--447.
\bibitem{CD} Chernov N., Dolgopyat D.
{\it Anomalous current in periodic Lorentz gases with infinite horizon,}
Russian Math. Surveys {\bf 64} (2009) 651--699.
\bibitem{ESTZ} Enriquez N., Sabot C.,  Tournier L., Zindy O.
{\it Stable fluctuations for ballistic random walks in random environment on
$\mathbb{Z}$}, ArXiv preprint 10041333.
\bibitem{ESZ} Enriquez N., Sabot C.,  Zindy O.
{\it Limit laws for transient random walks in random environment on
$\mathbb{Z}$}, ArXiv preprint 0703648.
\bibitem{GS} Gantert N., Shi Z.
{\it Many visits to a single site by a transient random walk in random environment,}
Stochastic Process. Appl.  {\bf 99}  (2002) 159--176.
\bibitem{G1} Goldsheid I.
{\it Simple transient random walks in one-dimensional random environment: the central limit theorem,}
Prob. Th., Related Fields  {\bf 139}  (2007) 41--64.
\bibitem{G2} Goldsheid I.
{\it Linear and sub-linear growth and the CLT for hitting times of a random walk in random environment on a strip,}
Prob. Th., Related Fields  {\bf 141}  (2008) 471--511.
\bibitem{G} Guivarch Y. {\it Heavy tail properties of stationary solutions of multidimensional stochastic recursions,}
In {\it Dynamics \& stochastics,}  85--99, IMS Lecture Notes Monogr. Ser., {\bf 48} (2006)
Inst. Math. Stat., Beachwood, OH.
\bibitem{GLP} Guivarch Y., Le Page E.
{\it On spectral properties of a family of transfer operators and convergence to stable laws for affine random walks,}
Erg. Th. Dynam. Systems  {\bf 28}  (2008) 423--446.
\bibitem{K1} Kesten H.
{\it Random difference equations and renewal theory for products of random matrices,}
Acta Math.  {\bf 131}  (1973), 207--248.
\bibitem{K2} Kesten H.
{\it Renewal theory for functionals of a Markov chain with general state space,}
Ann. Prob. {\bf 2}  (1974) 355--386.
\bibitem{KKS} Kesten H., Kozlov M., Spitzer F.
{\it A limit law for random walk in a random environment,}
Compositio Math. {\bf 30} (1975) 145--168.
\bibitem{P1} Peterson  J.
{\it Limiting distributions and large deviations for random walks in random environments,}
PhD Thesis - University of Minnesota, 2008.
\bibitem{P2} Peterson J.
{\it Quenched limits for transient, ballistic, sub-gaussian one-dimensional random walk in random environment,} Ann. Inst. H. Poincar Probab. Statist. {\bf 45} (2009) 685--709.
\bibitem{PS} Peterson J., Samorodnitsky G.
{\it Weak quenched limiting distributions for transient one-dimensional random walk in a random environment,} preprint.
\bibitem{PZ} Peterson J., Zeitouni O.
{\it Quenched limits for transient, zero-speed one-dimensional random walk in random environment,} Ann. Prob. {\bf  37} (2009) 143-188.
\bibitem{Resnick} Resnick S. {\it Extreme Values, Regular Variation And Point Processes}, Springer, 2007,
320 pp.
\bibitem{R} Revesz P. {\it Random walk in random and non-random environments,} 2d edition.
World Scientific Publishing Co. Pte. Ltd., Hackensack, NJ, 2005. xvi+380 pp.
\bibitem{SamT} Samorodnitsky G. and Taqqu M. {\it Stable non-Gaussian random processes,} Chapman \& Hall/CRC,
1994, 640 pp.
\bibitem{Shi} Shi Z. {\it A local time curiosity in random environment,}
Stochastic Process. Appl. {\bf 76} (1998) 231--250.
\bibitem{ST} Shiga T., Tanaka H.
{\it Infinitely divisible random probability distributions with an application to a random motion in a random environment,}
Electron. J. Probab.  {\bf 11}  (2006), paper 44, 1144--1183.
\bibitem{So}  Solomon F.{\it Random walks in a random environment,} Ann.
Prob. {\bf 3}, (1975) 1--31 .
\bibitem{S} Sinai Ya. G. {\it The limiting behavior of a one-dimensional
random walk in a random medium,} Theory Prob. Appl. \textbf{27} (1982),
256--268 .
\end{thebibliography}
\end{document}